\numberwithin{equation}{section}
\newcommand{\email}[1]{\href{mailto:#1}{#1}}
\newcommand{\UT}[1][k]{\underline{\mathsf{U}}_T^{#1}}
\newcommand{\Uh}[1][]{\underline{\mathsf{U}}_{h#1}^k}
\newcommand{\UhD}[1][0]{\underline{\mathsf{U}}_{h,#1}^k}
\newcommand{\Uhzero}[1][]{\underline{\mathsf{U}}_{h#1}^0}
\newcommand{\su}[1][T]{\underline{\mathsf{u}}_{#1}}
\newcommand{\sv}[1][T]{\underline{\mathsf{v}}_{#1}}
\newcommand{\sw}[1][T]{\underline{\mathsf{w}}_{#1}}
\newcommand{\sg}[1][T]{\underline{\mathsf{g}}_{#1}}
\newcommand{\unu}[1][]{\mathsf{u}_{#1}}
\newcommand{\unv}[1][]{\mathsf{v}_{#1}}
\newcommand{\unw}[1][]{\mathsf{w}_{#1}}
\newcommand{\bphi}{\vec{\phi}}
\newcommand{\pT}[1][k+1]{p_T^{#1}}
\newcommand{\PT}[1][k+1]{P_T^{#1}}
\newcommand{\ph}[1][k+1]{p_h^{#1}}
\newcommand{\GT}[1][k]{\vec{G}^{#1}_{T}}
\newcommand{\Gh}[1][k]{\vec{G}^{#1}_{h}}
\newcommand{\lproj}[2][h]{\pi_{#1}^{#2}}
\newcommand{\IT}[1][k]{\underline{\mathsf{I}}_T^{#1}}
\newcommand{\Ih}[1][k]{\underline{\mathsf{I}}_h^{#1}}
\newcommand{\jump}[2][F]{[#2]_{#1}}
\def\bfa{{\mathbf{a}}}
\def\bchi{\boldsymbol{\chi}}
\def\bL{\boldsymbol{\Lambda}}
\def\gra{\overline{a}}
\def\upa{\beta_\bfa}
\def\coera{\lambda_\bfa}
\def\div{\mathop{\rm div}}
\def\tr{\gamma}
\def\dx{{\,d\vec{x}}}
\def\dsx{{\,ds(\vec{x})}}
\newcommand{\asch}{A}
\title{A Hybrid High-Order method for Leray--Lions elliptic equations on general meshes}
\author[1]{Daniele A. Di Pietro\footnote{\email{daniele.di-pietro@umontpellier.fr}}}
\affil[1]{University of Montpellier, Institut Montp\'{e}llierain Alexander Grothendieck, 34095 Montpellier, France}
\author[2]{J\'{e}r\^{o}me Droniou\footnote{\email{jerome.droniou@monash.edu}}}
\affil[2]{School of Mathematical Sciences, Monash University, Clayton, Victoria 3800, Australia}
\begin{document}

\maketitle


\begin{abstract}
In this work, we develop and analyze a Hybrid High-Order (HHO) method for steady non-linear Leray--Lions problems.
The proposed method has several assets, including the support for arbitrary approximation orders and general polytopal meshes.
This is achieved by combining two key ingredients devised at the local level: a gradient reconstruction and a high-order stabilization term that generalizes the one originally introduced in the linear case.
The convergence analysis is carried out using a compactness technique.
Extending this technique to HHO methods has prompted us to develop a set of discrete functional analysis tools whose interest goes beyond the specific problem and method addressed in this work: (direct and) reverse Lebesgue and Sobolev embeddings for local polynomial spaces, $L^{p}$-stability and $W^{s,p}$-approximation properties for $L^{2}$-projectors on such spaces, and Sobolev embeddings for hybrid polynomial spaces.
Numerical tests are presented to validate the theoretical results for the original method and variants thereof.

\medskip

\noindent{\it 2010 Mathematics Subject Classification:} 65N08, 65N30, 65N12
\\
\noindent{\it Keywords:} Hybrid High-Order methods, nonlinear elliptic equations, $p$-Laplacian,
discrete functional analysis, convergence analysis, $W^{s,p}$-approximation properties
of $L^2$-projection on polynomials
\end{abstract}

\section{Introduction}

We are interested here in the numerical approximation of the steady Leray--Lions equation
\begin{subequations}
  \label{eq:strong}
  \begin{alignat}{2}
    \label{eq:strong:PDE}
    -\div(\bfa(\cdot,u,\GRAD u)) &= f &\qquad& \text{in $\Omega$,} 
    \\
    \label{eq:strong:BC}
    u &= 0 &\qquad& \text{on $\partial\Omega$,}
  \end{alignat}
\end{subequations}
where $\Omega\subset\Real^d$, $d\ge 1$, is a polytopal bounded connected domain of boundary $\partial\Omega$, while $\bfa:\Omega\times\Real\times\Real^d\to\Real^d$ is a (possibly nonlinear) function of its arguments, for which detailed assumptions are discussed in the following section.
The homogeneous Dirichlet boundary condition~\eqref{eq:strong:BC} is considered only for the sake of simplicity (the modifications required to handle more general boundary conditions are briefly addressed in the manuscript).
This equation, which contains the $p$-Laplace equation,
appears in the modelling of glacier motion \cite{Glowinski.Rappaz:03}, 
of incompressible turbulent flows in porous media \cite{Diaz.Thelin:94}
and in airfoil design \cite{Glowinski:84}.
Our goal is to design and analyze a discretization method for problem~\eqref{eq:strong} inspired by the Hybrid High-Order (HHO) method introduced in~\cite{Di-Pietro.Ern.ea:14} in the context of a linear diffusion model problem
(see also \cite{DiPietro.Droniou.Ern:15} for degenerate advection--diffusion--reaction models).
The proposed method offers several assets:%
\begin{inparaenum}[(i)]
\item the construction is dimension-independent;
\item fairly general meshes including polytopal elements and nonmatching interfaces are supported;
\item arbitrary polynomials orders can be considered (including the case $k=0$);
\item it is efficiently parallelisable (the local stencil only connects a mesh element with its faces), and it has reduced computational cost (when solving by a first-order algorithm, the element-based unknowns can be eliminated by static condensation).
\end{inparaenum}

\smallskip

Numerical methods allowing for arbitrary-order discretizations and general meshes have received increasing attention over the last few years.
Supporting general polytopal meshes is required, e.g., in the modelling of underground flows, where degenerate elements and nonconforming interfaces account for complex geometric features resulting from compaction, erosion, and the onset of fractures or faults.
Another relevant application of polyhedral meshes is adaptive mesh coarsening~\cite{Bassi.Botti.ea:12,Antonietti.Giani.ea:13}.
The literature on arbitrary-order polytopal methods for linear diffusion problems is vast.
In this context, methods that have similarities (and differences) with the HHO method include, e.g., the Hybridizable Discontinuous Galerkin method of~\cite{Castillo.Cockburn.ea:00,Cockburn.Gopalakrishnan.ea:09} (cf. also~\cite{Cockburn.Di-Pietro.ea:15} for a precise study of its relation with the HHO method), the Virtual Element Method of~\cite{Beirao-da-Veiga.Brezzi.ea:13,Beirao-da-Veiga.Brezzi.ea:13*1,Brezzi.Falk.ea:14}, the High-Order Mimetic method of~\cite{Lipnikov.Manzini:14}, the Weak Galerkin method of~\cite{Wang.Ye:13,Wang.Ye:14}, and the Multiscale Hybrid-Mixed method of~\cite{Araya.Harder.ea:13}.

\smallskip

The finite element approximation of nonlinear diffusion problems of Leray--Lions type on standard meshes has been studied in several papers; cf., e.g,~\cite{Barrett.Liu:94,Liu.Yan:01,Glowinski.Rappaz:03}.
The literature on polytopal meshes is, however, much more scarce, and is mainly restricted to the lowest-order case.
We cite here, in particular, the two-dimensional Discrete Duality Finite Volume schemes studied in~\cite{Andreianov.Boyer.ea:07} (cf. also the precursor papers~\cite{Andreianov.Boyer.ea:04,Andreianov.Boyer.ea:05,Andreianov.Boyer.ea:06}), the Mixed Finite Volume scheme of~\cite{Droniou:06} (inspired by~\cite{Droniou.Eymard:06}) valid in arbitrary space dimension, and the Mimetic Finite Difference method of~\cite{Antonietti.Bigoni.ea:14} for $p\in (1,2)$ and under more restrictive assumptions than \eqref{assum:gen}.
High-order discontinuous Galerkin approximations have also been considered in~\cite{Burman.Ern:08}.

\smallskip

The starting point for the present work is the HHO method of~\cite{Di-Pietro.Ern.ea:14}.
In the lowest-order case, it has been shown in \cite[Section 2.5]{Di-Pietro.Ern.ea:14}
that this method belongs to the Hybrid Mixed Mimetic family \cite{Droniou.Eymard.ea:10}, which includes the mixed-hybrid Mimetic Finite Differences
\cite{Brezzi.Lipnikov.et.al:05}, the Hybrid Finite Volume \cite{Eymard.Gallouet.ea:10}
and the Mixed Finite Volume \cite{Droniou.Eymard:06}. The HHO method can therefore be
seen as a higher order version of these schemes.
The (hybrid) degrees of freedom (DOFs) for the HHO method are fully discontinuous polynomials of degree $k\ge 0$ at mesh elements and faces.
The construction hinges on two key ingredients built element-wise:%
\begin{inparaenum}[(i)]
\item a discrete gradient defined from element- and face-based DOFs;
\item a high-order penalty term which vanishes whenever one of its arguments is a polynomial of degree $\le (k+1)$ inside the element.
\end{inparaenum}
These ingredients are combined to build a local contribution, which is then assembled element-wise.
A key feature reducing the computational cost is that only face-based DOFs are globally coupled, whereas element-based DOFs can be locally eliminated by a standard static condensation procedure.

\smallskip

The design of a HHO method for the nonlinear problem~\eqref{eq:strong} entails several new ideas.
A first difference with respect to the linear case is that a more natural choice is to seek the gradient reconstruction in the full space of vector-valued polynomials of degree $\le k$ (as opposed to the space spanned by gradients of scalar-valued polynomials of degree $\le (k+1)$).
The main consequence of this choice is that, when applied to the interpolates of smooth functions, the discrete gradient operator commutes with the $L^2$-projector, and therefore enjoys $L^p$-stability properties (see below).
A second important point is the design of a high-order stabilization term with appropriate scaling.
Here, we propose a generalization of the stabilization term of~\cite{Di-Pietro.Ern.ea:14} which preserves the property of vanishing whenever one of its arguments is a polynomial of degree $\le(k+1)$.
As in the linear case, the construction hinges on the solution of small local linear problems inside each elements, and the possibility of statically condense element-based DOFs remains available.

\smallskip

The convergence analysis is carried out using a compactness argument in the spirit of~\cite{Minty:63}.
This technique, while not delivering an estimate of the convergence rate, has the crucial advantage of relying solely on the solution regularity inherent to the weak formulation. This point is particularly relevant for nonlinear problems, where additional regularity assumptions may turn out to be fictitious.
The theoretical study of the convergence rate for smooth solutions is postponed to a future work.

Adapting the compactness argument has prompted us to develop discrete functional analysis tools whose interest goes beyond the specific method and problem considered in this work.
A first notable set of results are (direct and) reverse Lebesgue and Sobolev embeddings on local polynomial spaces (e.g., on mesh elements and faces, but curved geometries are also allowed).
The term reverse refers to the fact that the \emph{largest} exponent (semi-)norm is bounded above by the \emph{lowest} exponent (semi-)norm.
Direct Sobolev embedding for broken spaces on fairly general polytopal meshes are proved in~\cite{Buffa.Ortner:09,Di-Pietro.Ern:10}; specific instances had already been established in~\cite{Arnold:82,Karakashian.Jureidini:98,Brenner:03,Lasis.Suli:03,Girault.Riviere.ea:05}.
Reverse embeddings, on the other hand, are established in \cite[Theorem 4.5.11]{Brenner.Scott:08}, but under the assumption that all mesh elements are affine-equivalent to one (or a finite number of) given fixed reference elements.
This limitation is due to the very generic local finite element spaces considered therein.
Exploiting the fact that we deal with \emph{polynomial} local spaces, we can establish a more general version of reverse inequalities, that does not require to specify any particular geometry of the elements (only their non-degeneracy).
Reverse Lebesgue embeddings are a crucial ingredient to prove the stability of the HHO method.

A second set of results concerns the stability and approximation properties of the $L^2$-projector on local polynomial spaces.
More specifically, we prove under very general geometric assumptions that the $L^2$-projector is $L^p$-stable for any index $p\in[1,+\infty]$, and that it has optimal approximation properties in local polynomial spaces.
Stability results for (global) projectors onto finite element spaces can be found in
\cite{Crouzeix.Thomee:87,Carstensen:02,Bramble.Pasciak.et.al:02,Bank.Yserentant:14}. However, these
references mostly consider $H^1$-stability, and assume quite restrictive (and sometimes
difficult to check) geometrical assumptions on the meshes.
These limitations are a consequence of dealing with projectors on global finite element spaces, that include some form of continuity property between the mesh elements.
On discontinuous polynomial spaces such as the ones used in HHO methods, we can establish more general $L^p$- and $W^{s,p}$-stability and approximation properties of local $L^2$-projectors.
The approximation results extend to the $W^{s,p}$-setting the ones in~\cite[Section~1.4.4]{Di-Pietro.Ern:12}, based in turn on the ideas of~\cite{Dupont.Scott:80}.

Finally, a third set of discrete functional analysis tools are specific to polynomial spaces with a hybrid structure, i.e., using as DOFs polynomials at elements and faces.
In this case, building on the results of~\cite{Di-Pietro.Ern:10} for discontinuous Galerkin methods
(inspired by the low-order discrete functional analysis results of \cite{Droniou.Eymard:06,Eymard.Gallouet.ea:10}), we introduce a suitable discrete $W^{1,p}$-like norm and prove a discrete counterpart of Sobolev embeddings and a compactness result for the discrete gradient reconstruction upon which the HHO method hinges.

\smallskip

The material is organized as follows:
in Section~\ref{sec:cont.setting} we recall a set of standard assumptions to write a weak formulation for problem~\eqref{eq:strong}; 
in Section~\ref{sec:setting} we detail the discrete setting by specifying the assumptions on the mesh and recalling the basic results on local polynomial spaces;
in Section~\ref{sec:method} we formulate the HHO method, state (without proof) the main stability and convergence results, and provide a few numerical examples;
Section~\ref{sec:dfa} collects the discrete functional analysis tools on hybrid polynomial spaces, which are used in Section~\ref{sec:conv.anal} to prove the stability and convergence of the HHO method;
in Section~\ref{sec:other.bcs} we briefly address the treatment of other boundary conditions and hint at the modifications required in the analysis;
a conclusion is given in Section \ref{sec:ccl} and, finally, in Appendix~\ref{sec:appen} we provide the proofs of the discrete functional analysis results on local polynomial spaces.


\section{Continuous setting}\label{sec:cont.setting}

In this section we detail the assumptions on the function $\bfa$ and write a weak formulation for problem~\eqref{eq:strong}.
Let $p\in(1,+\infty)$ be given, and denote by $p'\eqbydef\frac{p}{p-1}$ the dual exponent of $p$, and by $p^*$ the Sobolev exponent of $p$ such that
\begin{equation}\label{eq:p*}
  p^* = \begin{cases}
    \frac{dp}{d-p} & \text{if $p<d$},
    \\
    +\infty & \text{if $p\ge d$.}
  \end{cases}
\end{equation}
We assume that
\begin{subequations}
	\label{assum:gen}
	\begin{equation}
		\label{hyp:acarat}
		\mbox{$\bfa:\Omega\times\Real\times\Real^d\to\Real^d$ is a Caratheodory function},
	\end{equation}
	\begin{equation}
		\label{hyp:ag}
		\begin{array}{l}
		\exists \gra\in L^{p'}(\Omega)\,,\;\exists \upa\in (0,+\infty)\,,\;\exists r<\frac{p^*}{p'}\,:\,
		|\bfa(\vec{x},s,\vec{\xi})|\le \gra(\vec{x})+ \upa|s|^r+\upa|\vec{\xi}|^{p-1}\\
		\qquad\mbox{for a.e. $\vec{x}\in\Omega$, for all $(s,\vec{\xi})\in\Real\times\Real^d$},
		\end{array}
	\end{equation}
	\begin{equation}
		\label{hyp:am}
		[\bfa(\vec{x},s,\vec{\xi})-\bfa(\vec{x},s,\vec{\eta})]\cdot[\vec{\xi}-\vec{\eta}]\ge 0
		\mbox{ for a.e. $\vec{x}\in\Omega$, for all $(s,\vec{\xi},\vec{\eta})\in\Real\times\Real^d\times\Real^d$},
	\end{equation}
	\begin{equation}
		\label{hyp:ac}
		\exists \coera\in (0,+\infty)\,:\,\bfa(\vec{x},s,\vec{\xi})\cdot\vec{\xi} \ge \coera|\vec{\xi}|^p
		\mbox{ for a.e. $\vec{x}\in\Omega$, for all $(s,\vec{\xi})\in\Real\times\Real^d$},
	\end{equation}
	\begin{equation}
		\label{hyp:fg}
		f\in L^{p'}(\Omega).
	\end{equation}
\end{subequations}
Here, Carathedory function means that $\bfa(\vec{x},\cdot,\cdot)$ is continuous on $\Real\times \Real^d$
for a.e. $\vec{x}\in\Omega$, and $\bfa(\cdot,s,\vec{\xi})$ is measurable on $\Omega$ for all $(s,\vec{\xi})\in \Real\times
\Real^d$.
The Euclidean dot product and norm in $\Real^d$ are denoted by $\vec{x}\cdot\vec{y}$ and $|\vec{x}|$, respectively.
Classically~\cite{Leray.Lions:65}, the weak formulation for \eqref{eq:strong} is
\begin{equation}
	\label{eq:weak}
	\begin{array}{l}
	\mbox{Find }u\in W^{1,p}_0(\Omega)\mbox{ such that, for all $v\in W^{1,p}_0(\Omega)$},\\
	\displaystyle \int_\Omega \bfa(\vec{x},u(\vec{x}),\GRAD u(\vec{x}))\cdot
\GRAD v(\vec{x})\dx=\int_\Omega f(\vec{x})v(\vec{x})\dx.
	\end{array}
\end{equation}

The $p$-Laplace equation is probably the simplest type of Leray-Lions operator, and consists in setting
  \begin{equation}\label{eq:bfa.plap}
    \bfa(\vec{x},u,\GRAD u)=|\GRAD u|^{p-2}\GRAD u.
  \end{equation}
In \cite{Blatter.95}, a simplified model of the stationary motion of glaciers is given by \eqref{eq:strong}
with $$\bfa(\vec{x},u,\GRAD u)=F(|\GRAD u|)\GRAD u,$$ where $F$ is the solution to the implicit
equation $F(s)^{-1}=(sF(s))^{\frac{\alpha}{1-\alpha}}+T_0^{\frac{\alpha}{1-\alpha}}$;
here, $\alpha=2-p\in (0,1)$, $T_0>0$, and the unknown $u$ in \eqref{eq:strong:PDE}
is the horizontal velocity of the ice. It is proved in \cite{Glowinski.Rappaz:03}
that this choice of $\bfa$ satisfies \eqref{assum:gen}. We refer the reader
to \cite{Diaz.Thelin:94} for a discussion of models of turbulent flows using
time-dependent versions of \eqref{eq:strong:PDE} with $\bfa$ of the form
$$\bfa(\vec{x},u,\GRAD u)=|\GRAD u - \vec{h}(u)|^{p-2}(\GRAD u - \vec{h}(u))$$ for some function
$\vec{h}:\Real\to\Real^d$.

Existence of a solution to \eqref{eq:weak} is a consequence of the general 
results in \cite{Leray.Lions:65}. Even if $\bfa$ does not depend on $s$, the solution (whether weak
or strong) is usually not unique, see e.g. \cite[Remark 3.4]{Doniou.Eymard.et.al:12}. 
Establishing a uniqueness result on \eqref{eq:weak} requires to strengthen the monotonicity assumption \eqref{hyp:am}.
If $\bfa$ does not depend on $s$ and is \emph{strictly monotone}, in the
sense that \eqref{hyp:am} holds with a strict inequality whenever $\vec{\xi}\not=\vec{\eta}$,
then the uniqueness of the solution to \eqref{eq:weak} is easy to see. Indeed, starting from two solutions $u$ and $u'$,
subtracting the equations and taking $v=u-u'$, we find 
\[
\int_\Omega \left[\bfa(\vec{x},\GRAD u(\vec{x}))-\bfa(\vec{x},\GRAD u'(\vec{x}))\right]
\cdot \left[\GRAD u(\vec{x})-\GRAD u'(\vec{x})\right]\dx=0.
\]
Since the integrand is non-negative, and strictly positive if $\GRAD u(\vec{x})\neq\GRAD u'(\vec{x})$,
this relation shows that $\GRAD u=\GRAD u'$ a.e. on $\Omega$. We then deduce from the homogeneous boundary condition
that $u=u'$ a.e. on $\Omega$.
If $\bfa$ depends on $s$, the uniqueness of the solution is obtained by strengthening
even more the monotonicity assumption \eqref{hyp:am}, and by assuming that $\bfa$ is Lipschitz continuous with respect to
$s$, see \cite{Boccardo.et.al:92,CasadoDiaz.et.al07}.


\section{Discrete setting}\label{sec:setting}

This section presents the discrete setting: admissible mesh sequences, analysis tools on such meshes, DOFs, reduction maps, and reconstruction operators.

\subsection{Assumptions on the mesh}\label{sec:setting:mesh}

Denote by ${\cal H}\subset \Real_*^+ $ a countable set of meshsizes having $0$ as its unique accumulation point.
Following~\cite[Chapter~4]{Di-Pietro.Ern:12}, we consider $h$-refined mesh sequences $(\Th)_{h \in {\cal H}}$ where, for all $ h \in {\cal H} $, $\Th$ is a finite collection of nonempty disjoint open polyhedral elements $T$
such that $\closure{\Omega}=\bigcup_{T\in\Th}\closure{T}$ and $h=\max_{T\in\Th} h_T$
with $h_T$ standing for the diameter of the element $T$.
A face $F$ is defined as a hyperplanar closed connected subset of $\closure{\Omega}$ with positive $ (d{-}1) $-dimensional Hausdorff measure and such that
\begin{inparaenum}[(i)]%
  \item either there exist $T_1,T_2\in\Th $ such that $F\subset\partial T_1\cap\partial T_2$ and $F$ is called an interface or 
  \item there exists $T\in\Th$ such that $F\subset\partial T\cap\partial\Omega$ and $F$ is called a boundary face.
\end{inparaenum}
Interfaces are collected in the set $\Fhi$, boundary faces in $\Fhb$, and we let $\Fh\eqbydef\Fhi\cup\Fhb$.
The diameter of a face $F\in\Fh$ is denoted by $h_F$.
For all $T\in\Th$, $\Fh[T]\eqbydef\{F\in\Fh\st F\subset\partial T\}$ denotes the set of faces contained in $\partial T$ (with $\partial T$ denoting the boundary of $T$) and, for all $F\in\Fh[T]$, $\normal_{TF}$ is the unit normal to $F$ pointing out of $T$.
Symmetrically, for all $F\in\Fh$, we let $\Th[F]\eqbydef\{T\in\Th\st F\subset \partial T\}$
the set of elements having $F$ as a face.

Our analysis hinges on the following assumption on the mesh sequence. 

\begin{assumption}[Admissible mesh sequence]
  \label{def:adm.Th}
  For all $h\in{\cal H}$, $\Th$ admits a matching simplicial submesh $\fTh$ and there exists a real number $\varrho>0$ such that, for all $h\in{\cal H}$:
  \begin{inparaenum}[(i)]%
  \item for all simplex $S\in\fTh$ of diameter $h_S$ and inradius $r_S$, $\varrho h_S\le r_S$, and
  \item for all $T\in\Th$, and all $S\in\fTh$ such that $S\subset T$, $\varrho h_T \le h_S$.
  \end{inparaenum}
\end{assumption}

The simplicial submesh in this assumption is just a theoretical tool, and it is not used in the actual construction of the discretization method.
Given an admissible mesh sequence, for all $h\in{\cal H}$, all $T\in\Th$, and all $F\in\Fh[T]$, $h_F$ is uniformly comparable to $h_T$ in the sense that (cf.~\cite[Lemma~1.42]{Di-Pietro.Ern:12}):
\begin{equation}
  \label{eq:hF}
  \varrho^2 h_T\leq h_F\leq h_T.
\end{equation}
Moreover, \cite[Lemma 1.41]{Di-Pietro.Ern:12} shows that there exists an integer $\Np$ depending on $\varrho$ such that
\begin{equation}
  \label{eq:Np}
  \forall h\in{\cal H}\,:\,
  \max_{T\in\Th}\card{\Fh[T]}\le\Np.
\end{equation}
Finally, by~\cite[Lemma 1.40]{Di-Pietro.Ern:12}, there is an integer $N_{\rm s}$ depending on $\varrho$ such that
  \begin{equation}
    \label{eq:Ns}
    \forall h\in{\cal H}\,:\,
    \max_{T\in\Th}\card{\{S\in\fTh\st S\subset T\}}\le N_{\rm s}.
  \end{equation}

\subsection{Basic results on local polynomial spaces}\label{sec:local.poly}
The building blocks for the HHO method are local polynomial spaces on elements and faces.
Let an integer $l\ge 0$ be fixed. Let $U$ be a subset of $\Real^{N}$ (for some $N\ge 1$), $H_U$ the affine space spanned by $U$, $d_U$ its dimension, and assume that $U$ has a non-empty interior in $H_U$.
We denote by $\Poly{l}(U)$ the space spanned by $d_U$-variate polynomials on $H_U$ of total degree $\le l$.
In the following sections, we will typically have $N=d$ and the set $U$ will represent a mesh element (and $d_U=d$) or a mesh face (and $d_U=d-1$).
We note, in passing, that a subset $U$ with curved boundaries is also allowed except in Lemma~\ref{lem:Wkp.interp.trace}, which is why we use the different notation $T$ instead of $U$ in this lemma.

A key element in the construction are $L^2$-projectors onto local polynomial spaces on bounded subsets $U\subset\Real^{N}$.
The $L^2$-projector $\lproj[U]{l}: L^1(U)\to \Poly[N]{l}(U)$ is defined as follows:
For any $w\in L^1(U)$, $\lproj[U]{l}w$ is the unique element of $\Poly{l}(U)$ such that
\begin{equation}\label{def:lproj}
  \forall v\in\Poly{l}(U)\,:\,
  \int_U \lproj[U]{l}w(\vec{x})v(\vec{x}) d\vec{x}=
  \int_U w(\vec{x})v(\vec{x}) d\vec{x}.
\end{equation}
Note that the regularity $w\in L^1(U)$ suffices to integrate $w$ against polynomials on $U$ (which are bounded functions).
In what follows, we state some stability and approximation properties for the $L^2$-projector.
The proofs are postponed to Appendix~\ref{sec:LPest.L2proj}.

\begin{restatable}[$L^p$-stability of $L^2$-projectors on polynomial spaces]{lemma}{stabproj}\label{lem:stab.proj}
  Let $U$ be a measurable subset of $\Real^N$, with inradius $r_U$ and diameter $h_U$, such that
  \begin{equation}\label{eq:reg.U}
    \frac{r_U}{h_U}\ge \delta>0.
  \end{equation}
  Let $k\in\mathbb{N}$ and $p\in [1,+\infty]$.
  Then, there exists $C$ only depending on $N$, $\delta$, $k$ and $p$
  such that
  \begin{equation}\label{stab.proj}
    \forall g\in L^{p}(U)\,:\,\norm[L^p(U)]{\lproj[U]{k}g}\le C\norm[L^p(U)]{g}.
  \end{equation}
\end{restatable}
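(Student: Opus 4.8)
The plan is to reduce everything to two ingredients intrinsic to \emph{polynomial} spaces: the $L^2$-stability of the orthogonal projector, which is free of charge, and a shape-uniform \emph{reverse} Lebesgue embedding on $\Poly{k}(U)$. First I would note that~\eqref{stab.proj} is invariant under the dilation $\vec{x}\mapsto\vec{x}/h_U$ (both sides carry the factor $h_U^{N/p}$ and $\lproj[U]{k}$ is defined intrinsically), so we may assume $h_U=1$, whence $r_U\ge\delta$ and $U$ sits between an inner ball $B(\vec{x}_1,\delta)\subseteq U$ and an outer ball $B(\vec{x}_0,1)\supseteq U$ (take $\vec{x}_0\in U$). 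Monotonicity of the integral on these inclusions yields the volume equivalence $\delta^N\omega_N\le|U|\le\omega_N$, with $\omega_N$ the volume of the unit ball, which I will use to absorb the powers of $|U|$ produced by H\"older's inequality.

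The key polynomial estimate I would isolate is: for all $q\in[1,+\infty]$ there exists $C_{\rm rev}$ depending only on $N,\delta,k,q$ such that $\norm[L^q(U)]{v}\le C_{\rm rev}\,\norm[L^2(U)]{v}$ for every $v\in\Poly{k}(U)$ (after the normalisation $h_U=1$). Granting this, I treat two regimes. For $p\ge2$, since $\lproj[U]{k}g\in\Poly{k}(U)$,
\begin{equation*}
  \norm[L^p(U)]{\lproj[U]{k}g}\le C_{\rm rev}\,\norm[L^2(U)]{\lproj[U]{k}g}\le C_{\rm rev}\,\norm[L^2(U)]{g}\le C_{\rm rev}\,|U|^{\frac12-\frac1p}\norm[L^p(U)]{g},
\end{equation*}
the middle step being the trivial $L^2$-stability and the last H\"older's inequality (valid as $2\le p$); the factor $|U|^{\frac12-\frac1p}$ is controlled by the volume equivalence, giving~\eqref{stab.proj}. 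For $1\le p<2$ I would argue by duality: writing $\norm[L^p(U)]{\lproj[U]{k}g}=\sup\{\int_U(\lproj[U]{k}g)\phi:\norm[L^{p'}(U)]{\phi}\le1\}$ and using $\int_U(\lproj[U]{k}g)\phi=\int_U(\lproj[U]{k}g)(\lproj[U]{k}\phi)=\int_U g\,(\lproj[U]{k}\phi)$ (self-adjointness and idempotency of $\lproj[U]{k}$, since $\lproj[U]{k}g\in\Poly{k}(U)$), one obtains $\norm[L^p(U)]{\lproj[U]{k}g}\le\norm[L^p(U)]{g}\,\sup_{\norm[L^{p'}(U)]{\phi}\le1}\norm[L^{p'}(U)]{\lproj[U]{k}\phi}$. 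As $p'>2$, the supremum is bounded by the stability already proved in the regime $\ge2$; the endpoints $p\in\{1,+\infty\}$ follow by taking $q=+\infty$, resp. $p'=+\infty$.

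The main obstacle is proving the reverse embedding with a constant depending only on $N,\delta,k,q$ and \emph{not} on the otherwise arbitrary shape of $U$, which is exactly the feature the paper stresses. I would obtain it by sandwiching: $\norm[L^q(U)]{v}\le\norm[L^q(B(\vec{x}_0,1))]{v}\le C_1\,\norm[L^2(B(\vec{x}_0,1))]{v}$, where $C_1=C_1(N,k,q)$ is the \emph{fixed} norm-equivalence constant on the unit ball, followed by $\norm[L^2(B(\vec{x}_0,1))]{v}\le C_2\,\norm[L^2(B(\vec{x}_1,\delta))]{v}\le C_2\,\norm[L^2(U)]{v}$. The delicate point is the constant $C_2$: on the finite-dimensional space $\Poly{k}$ the maps $v\mapsto\norm[L^2(B(\vec{x}_0,1))]{v}$ and $v\mapsto\norm[L^2(B(\vec{x}_1,\delta))]{v}$ are genuine norms (a degree-$k$ polynomial vanishing on a ball is zero), hence equivalent for each fixed configuration; uniformity over all admissible positions follows because the quotient depends continuously on the translation $\vec{x}_1-\vec{x}_0$, which ranges over the compact set $\{|\vec{x}_1-\vec{x}_0|\le 1-\delta\}$ forced by $B(\vec{x}_1,\delta)\subseteq B(\vec{x}_0,1)$. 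A compactness argument then bounds $C_2$ by a quantity depending only on $N,k,\delta$, which is precisely what makes the whole estimate independent of any reference element.
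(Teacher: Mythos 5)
Your proof is correct and follows essentially the same route as the paper's: the $p=2$ case is free, the case $p>2$ is handled by a shape-uniform reverse Lebesgue embedding on $\Poly{k}(U)$ (the paper's Lemma~\ref{lem:lebemb}, proved there by the same rescaling-plus-finite-dimensional-norm-equivalence argument, with concentric balls centred at the incentre so that no compactness over translations is needed), and the case $p<2$ follows by duality using the self-adjointness of $\lproj[U]{k}$.
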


\begin{remark}[Geometric regularity~\eqref{eq:reg.U} for mesh elements and faces]\label{rem:geom.reg}
Elements $T\in\Th$ and faces $F\in\Fh$ of an admissible mesh sequence satisfy the geometric 
regularity assumption~\eqref{eq:reg.U} with $\delta=\varrho^2$ and $\delta=\varrho$ respectively.
\end{remark}

In the case where $W^{s,p}(U)$ is continuously embedded in $C(\overline{U})$, the following result can be found in \cite[Theorem 4.4.4]{Brenner.Scott:08}.
This restriction on the space $W^{s,p}(U)$, which would prevent us from analyzing interesting cases for \eqref{eq:strong}, is due to the very general setting chosen for analyzing the interpolation error. Because we focus here on local polynomial spaces and $L^2$-projectors, we can improve this result and obtain optimal interpolation errors for any $s,p$.
If $U$ is an open set of $\Real^N$, $s\in\mathbb{N}$ and $p\in [1,+\infty]$, we recall that $\seminorm[W^{s,p}(U)]{\cdot}$ is
defined by
\[
\forall v\in W^{s,p}(U)\,,\quad\seminorm[W^{s,p}(U)]{v}\eqbydef
\sum_{\alpha\in\mathbb{N}^N,\,|\alpha|_{\ell^1}=s}\norm[L^p(U)]{\partial^\alpha v},
\]
where $|\alpha|_{\ell^1}=\alpha_1+\ldots+\alpha_N$ and $\partial^\alpha=\partial_1^{\alpha_1}\cdots\partial_N^{\alpha_N}$.

\begin{restatable}[$W^{s,p}$-approximation properties of $L^2$-projectors on polynomial spaces]{lemma}{Wkpinterp}\label{lem:Wkp.interp}
  Let $U$ be an open subset of $\Real^N$ with diameter $h_U$,
  such that $U$ is star-shaped with respect to a ball of radius $\rho h_U$ for
  some $\rho>0$. Let $k\in\mathbb{N}$, $s\in\{1,\ldots,k+1\}$ and $p\in [1,+\infty]$.
  Then, there exists $C$ only depending on $N$, $\rho$, $k$, $s$ and $p$ such that
  \begin{equation}\label{eq:approx.lproj.Wsp}
    \forall m\in \{0,\ldots,s\}\,,\;\forall v\in W^{s,p}(U)\,:\,
    \seminorm[W^{m,p}(U)]{v-\lproj[U]{k}v}\le Ch_U^{s-m}\seminorm[W^{s,p}(U)]{v}.
  \end{equation}
\end{restatable}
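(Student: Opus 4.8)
The plan is to compare $\lproj[U]{k}v$ not with $v$ directly but with a fixed polynomial approximant, and to transfer the approximation properties of the latter through the $L^p$-stability of the projector. Because $U$ is star-shaped with respect to a ball $B$ of radius $\rho h_U$, for $v\in W^{s,p}(U)$ I can define the averaged Taylor polynomial $\mathsf{T}^s v\in\Poly{s-1}(U)$ obtained by averaging the pointwise Taylor expansion of order $s$ against a smooth bump supported in $B$. Crucially, this construction only involves derivatives of $v$ up to order $s-1$, so it makes sense for every $v\in W^{s,p}(U)$ with no need for a continuous embedding $W^{s,p}(U)\hookrightarrow C(\overline U)$; this is exactly what lets us dispense with the restriction of \cite[Theorem 4.4.4]{Brenner.Scott:08}. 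The first task is then to prove the Dupont--Scott (Bramble--Hilbert) estimate
\[
\forall m\in\{0,\ldots,s\}\,:\quad \seminorm[W^{m,p}(U)]{v-\mathsf{T}^s v}\le C h_U^{s-m}\seminorm[W^{s,p}(U)]{v},
\]
with $C$ depending only on $N$, $\rho$, $s$, $p$.

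To establish this estimate I would use the Sobolev integral representation of the Taylor remainder: differentiating $v-\mathsf{T}^s v$ a total of $m$ times expresses it as an integral operator acting on the order-$s$ derivatives of $v$, whose kernel is dominated by $|\vec{x}-\vec{y}|^{s-m-N}$. For $m<s$ this kernel is integrable on $U$ with $L^1$-norm of order $h_U^{s-m}$, so Young's convolution inequality---valid for every $p\in[1,+\infty]$, including the endpoints---gives the claimed bound, while the case $m=s$ is trivial since $\seminorm[W^{s,p}(U)]{\mathsf{T}^s v}=0$. The $h_U$-scaling is tracked either by the explicit kernel bound or by a homogeneity argument.

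With this in hand, I would use the two defining features of the projector. Since $s-1\le k$, the polynomial $\mathsf{T}^s v$ belongs to $\Poly{k}(U)$ and is fixed by $\lproj[U]{k}$, whence the decomposition
\[
v-\lproj[U]{k}v=(v-\mathsf{T}^s v)+\lproj[U]{k}(\mathsf{T}^s v-v).
\]
Moreover, Lemma~\ref{lem:stab.proj} applies because the geometric regularity \eqref{eq:reg.U} holds with $\delta=\rho$ (the ball $B\subset U$ has radius $\rho h_U$). For $m=0$, the triangle inequality, the $L^p$-stability of $\lproj[U]{k}$, and the Bramble--Hilbert bound at $m=0$ yield the result directly. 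For $m\ge 1$ I bound the first term of the decomposition by Bramble--Hilbert at level $m$, and the second term by first invoking an inverse inequality $\seminorm[W^{m,p}(U)]{w}\le C h_U^{-m}\norm[L^p(U)]{w}$ for $w\in\Poly{k}(U)$ (available under \eqref{eq:reg.U} from the reverse Sobolev embeddings of this work, or by a scaling argument on the finite-dimensional space $\Poly{k}(U)$), then the $L^p$-stability of the projector, and finally Bramble--Hilbert at level $0$:
\[
\seminorm[W^{m,p}(U)]{\lproj[U]{k}(\mathsf{T}^s v-v)}\le C h_U^{-m}\norm[L^p(U)]{\mathsf{T}^s v-v}\le C h_U^{s-m}\seminorm[W^{s,p}(U)]{v}.
\]
Summing the two contributions closes the estimate.

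The main obstacle is the first step: making the Dupont--Scott estimate rigorous for genuine Sobolev functions (not merely smooth ones) uniformly over the whole range $p\in[1,+\infty]$ and $s\in\{1,\ldots,k+1\}$, which requires a careful density argument to justify the integral representation of the remainder and careful bookkeeping of the $h_U$-powers under the sole star-shapedness hypothesis. By contrast, the reduction to $\lproj[U]{k}$---trading direct control of the projection for control of a polynomial approximant plus $L^p$-stability and an inverse inequality---is the conceptually clean step, and it is precisely this reduction that removes the continuous-embedding restriction present in the reference result.
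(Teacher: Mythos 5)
Your proposal is correct and follows essentially the same route as the paper: the same splitting $v-\lproj[U]{k}v=(v-Q^s v)+\lproj[U]{k}(Q^s v-v)$ via the averaged Taylor polynomial (which is fixed by $\lproj[U]{k}$ since $s-1\le k$), the same use of the iterated inverse inequality plus Lemma~\ref{lem:stab.proj} on the projected term, and the same Dupont--Scott remainder bounds at levels $m$ and $0$. The only difference is that you sketch a proof of the remainder estimate from the Sobolev integral representation and Young's inequality, where the paper simply invokes \cite[Lemma 4.3.8]{Brenner.Scott:08} after reducing to smooth $v$ by density.
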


\begin{remark}\label{rem:union.ss} Using \cite[Section 7]{Dupont.Scott:80}, the result still holds
  if $U$ is a finite union of domains that are star-shaped with respect to balls
  of radius comparable to $h_U$. This enables us to use Lemma~\ref{lem:Wkp.interp} on elements
  of admissible mesh sequences, which are the union of a finite number of simplices; cf.~\eqref{eq:Ns}.
\end{remark}

The next result estimates the trace of the error, and therefore requires more
geometric assumptions on the domain (which, in the following sections, will be invariably a mesh element $T$).

\begin{restatable}[Approximation properties of traces of $L^2$-projectors on polynomial spaces]{lemma}{Wkpinterptrace}
  \label{lem:Wkp.interp.trace}
  Let $T$ be a polyhedral subset of $\Real^N$ with diameter $h_T$,
  such that $T$ is the union of disjoint simplices $S$ of diameter $h_S$
  and inradius $r_S$ such that $\varrho^2 h_T\le \varrho h_S\le r_S$ for some $\varrho>0$.
  Let $k\in\mathbb{N}$, $s\in\{1,\ldots,k+1\}$ and $p\in [1,+\infty]$.
  Then, there exists $C$ only depending on $N$, $\varrho$, $k$, $s$ and $p$ such that
  \begin{equation}\label{eq:approx.lproj.Wsp.trace}
    \forall m\in \{0,\ldots,s-1\}\,,\;\forall v\in W^{s,p}(T)\,:\,
    h_T^{\frac1p}\seminorm[{W^{m,p}(\Fh[T])}]{v-\lproj[T]{k}v}\le Ch_T^{s-m}\seminorm[W^{s,p}(T)]{v}.
  \end{equation}
  Here, $W^{m,p}(\Fh[T])$ is the set of functions that belong to $W^{m,p}(F)$ for
  any hyperplanar face $F$ of $T$, with corresponding broken norm.
\end{restatable}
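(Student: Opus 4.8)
The plan is to prove the trace approximation estimate \eqref{eq:approx.lproj.Wsp.trace} by reducing the trace (semi-)norm on $\partial T$ to a combination of volume (semi-)norms on $T$ via a trace inequality, and then invoking the already-established volume approximation result Lemma~\ref{lem:Wkp.interp}. The key observation is that the error $e\eqbydef v-\lproj[T]{k}v$ is itself a function in $W^{s,p}(T)$, so I can apply a scaled trace inequality to $e$ on each simplex $S$ of the decomposition of $T$, and then bound the resulting volume terms by Lemma~\ref{lem:Wkp.interp} (in the union-of-simplices form noted in Remark~\ref{rem:union.ss}).

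\medskip

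First I would fix a face $F$ of $T$ and $m\in\{0,\ldots,s-1\}$, and write $\seminorm[W^{m,p}(F)]{e}$ as a sum of $\norm[L^p(F)]{\partial^\alpha e}$ over multi-indices $\alpha$ with $|\alpha|_{\ell^1}=m$. Since $F$ is a face of one of the simplices $S\subset T$ with $S$ shape-regular (thanks to $\varrho^2 h_T\le\varrho h_S\le r_S$), a standard continuous trace inequality on the reference simplex, transported to $S$ by scaling, gives for each such derivative $w\eqbydef\partial^\alpha e$ the bound
\begin{equation}\label{eq:trace.scaling}
  \norm[L^p(F)]{w}^p \le C\Bigl(h_S^{-1}\norm[L^p(S)]{w}^p + h_S^{p-1}\seminorm[W^{1,p}(S)]{w}^p\Bigr),
\end{equation}
with $C$ depending only on $N$, $p$ and the shape-regularity constant $\varrho$. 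Because $w=\partial^\alpha e$ with $|\alpha|_{\ell^1}=m$, the two terms on the right-hand side involve $\seminorm[W^{m,p}(S)]{e}$ and $\seminorm[W^{m+1,p}(S)]{e}$ respectively. Summing over the finitely many simplices $S\subset T$ (bounded in number by $N_{\rm s}$, cf.~\eqref{eq:Ns}) and over the $F\in\Fh[T]$, and using $h_S$ uniformly comparable to $h_T$ from $\varrho^2 h_T\le\varrho h_S$, I obtain
\begin{equation}\label{eq:trace.to.vol}
  h_T^{\frac1p}\seminorm[{W^{m,p}(\Fh[T])}]{e}
  \le C\Bigl(\seminorm[W^{m,p}(T)]{e} + h_T\,\seminorm[W^{m+1,p}(T)]{e}\Bigr),
\end{equation}
where the extra $h_T^{1/p}$ prefactor on the left exactly absorbs the $h_S^{-1/p}$ and $h_S^{(p-1)/p}$ weights from~\eqref{eq:trace.scaling} and converts them into the clean powers of $h_T$ above.

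\medskip

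The final step is to bound the two volume seminorms in~\eqref{eq:trace.to.vol} using Lemma~\ref{lem:Wkp.interp}. Since $T$ is a finite union of shape-regular simplices, Remark~\ref{rem:union.ss} allows me to apply~\eqref{eq:approx.lproj.Wsp} on $T$ itself: taking exponent $m$ gives $\seminorm[W^{m,p}(T)]{e}\le Ch_T^{s-m}\seminorm[W^{s,p}(T)]{v}$, and taking exponent $m+1$ (which is admissible precisely because $m\le s-1$, so $m+1\le s\le k+1$) gives $h_T\seminorm[W^{m+1,p}(T)]{e}\le Ch_T\cdot h_T^{s-m-1}\seminorm[W^{s,p}(T)]{v}=Ch_T^{s-m}\seminorm[W^{s,p}(T)]{v}$. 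Both contributions therefore scale as $h_T^{s-m}\seminorm[W^{s,p}(T)]{v}$, which yields exactly~\eqref{eq:approx.lproj.Wsp.trace}.

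\medskip

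I expect the main technical obstacle to be a careful justification of the scaling in the trace inequality~\eqref{eq:trace.scaling}, in particular ensuring that all constants depend only on the shape-regularity parameter $\varrho$ and not on the individual simplices, and tracking the correct powers of $h_T$ through the change of variables to the reference simplex. A secondary subtlety is that the restriction $m\le s-1$ is essential: it guarantees that the higher-order volume seminorm $\seminorm[W^{m+1,p}(T)]{e}$ appearing after the trace inequality is still of order at most $s$, so that Lemma~\ref{lem:Wkp.interp} remains applicable and the estimate does not lose a power of $h_T$. The summation over faces and simplices itself is routine given the uniform bounds~\eqref{eq:Np} and~\eqref{eq:Ns} on the combinatorial complexity of an admissible mesh element.
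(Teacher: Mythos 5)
Your proposal is correct and follows essentially the same route as the paper's own proof: a scaled continuous trace inequality on the shape-regular simplices composing $T$ (reduced to the reference simplex), applied to each derivative $\partial^\alpha(v-\lproj[T]{k}v)$ with $|\alpha|_{\ell^1}=m$, followed by the volume estimates of Lemma~\ref{lem:Wkp.interp} at orders $m$ and $m+1$. The only cosmetic difference is that you state the trace inequality in $p$-th-power form with explicit $h_S^{-1}$ and $h_S^{p-1}$ weights, whereas the paper writes the equivalent form $h_T^{1/p}\norm[L^p(\partial T)]{w}\lesssim\norm[L^p(T)]{w}+h_T\norm[L^p(T)]{\GRAD w}$.
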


Finally, the triangle inequality applied to \eqref{eq:approx.lproj.Wsp} (with $m=s$)
and to \eqref{eq:approx.lproj.Wsp.trace} (with $m=s-1$)
immediately gives the following extension of Lemma \ref{lem:stab.proj}.

\begin{corollary}[$W^{s,p}$-stability of $L^2$-projectors on polynomial spaces]\label{cor:Wsp.stab}
  The following holds:
  \begin{enumerate}[(i)]
  \item Under the assumptions of Lemma \ref{lem:Wkp.interp}, we have, with $C$ only depending on $N$, $\rho$, $k$, $s$ and $p$,
    \[
    \forall v\in W^{s,p}(U)\,:\,
    \seminorm[W^{s,p}(U)]{\lproj[U]{k}v}\le C\seminorm[W^{s,p}(U)]{v};
    \]
  \item Under the assumptions of Lemma \ref{lem:Wkp.interp.trace}, we have with $C$ only depending on $N$, $\varrho$, $k$, $s$ and $p$,
    \[
    \forall v\in W^{s,p}(T)\,:\,
    \seminorm[{W^{s-1,p}(\Fh[T])}]{\lproj[T]{k}v}\le Ch_T^{\frac{1}{p'}}\seminorm[W^{s,p}(T)]{v}
    +\seminorm[{W^{s-1,p}(\Fh[T])}]{v}.
    \]
  \end{enumerate}
\end{corollary}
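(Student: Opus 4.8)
The plan is to obtain both inequalities from the triangle inequality together with the approximation estimates already established, simply by choosing the free index $m$ at its extremal admissible value and tracking the resulting powers of the diameter. Neither part requires any new analytical input beyond Lemmas~\ref{lem:Wkp.interp} and~\ref{lem:Wkp.interp.trace}.

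For point (i), I would start from
\[
\seminorm[W^{s,p}(U)]{\lproj[U]{k}v}
\le \seminorm[W^{s,p}(U)]{v-\lproj[U]{k}v}+\seminorm[W^{s,p}(U)]{v},
\]
and then invoke Lemma~\ref{lem:Wkp.interp} with $m=s$. Since $s-m=0$, the factor $h_U^{s-m}$ collapses to $1$ and the first term on the right is bounded by $C\seminorm[W^{s,p}(U)]{v}$; absorbing the trailing seminorm into the constant yields the claim.

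For point (ii), I would likewise write
\[
\seminorm[{W^{s-1,p}(\Fh[T])}]{\lproj[T]{k}v}
\le \seminorm[{W^{s-1,p}(\Fh[T])}]{v-\lproj[T]{k}v}+\seminorm[{W^{s-1,p}(\Fh[T])}]{v}
\]
and apply Lemma~\ref{lem:Wkp.interp.trace} with $m=s-1$ to the first term. Here $s-m=1$, so after dividing the estimate \eqref{eq:approx.lproj.Wsp.trace} through by $h_T^{1/p}$ the error seminorm on the faces is controlled by $Ch_T^{1-\frac1p}\seminorm[W^{s,p}(T)]{v}=Ch_T^{\frac1{p'}}\seminorm[W^{s,p}(T)]{v}$, which is precisely the first term in the asserted bound.

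No genuine obstacle arises: both statements are purely formal consequences of the preceding lemmas. The only point requiring a little care is the exponent bookkeeping, namely recognising that the choice $m=s$ (respectively $m=s-1$) is what removes the diameter scaling in (i) (respectively produces the $h_T^{\frac1{p'}}$ weight in (ii)), together with the identity $1-\frac1p=\frac1{p'}$ that rewrites the power of $h_T$ in the stated form.
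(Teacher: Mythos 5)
Your argument is correct and is precisely the paper's own proof: the paper obtains the corollary by applying the triangle inequality to \eqref{eq:approx.lproj.Wsp} with $m=s$ for (i) and to \eqref{eq:approx.lproj.Wsp.trace} with $m=s-1$ for (ii), with the same exponent bookkeeping $1-\frac1p=\frac1{p'}$. Nothing to add.
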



\section{The Hybrid High-Order method}\label{sec:method}

In this section we introduce the space of degrees of freedom, define the gradient and potential reconstructions at the heart of the HHO method, state the discrete problem along with the main stability and convergence results, and provide some numerical examples.

\subsection{Local degrees of freedom, interpolation and reconstructions}
\label{sec:DOFs}
Let a polynomial degree $k\ge 0$ and an element $T\in\Th$ be fixed.
We define the local space of DOFs
\begin{equation}
  \label{eq:UT}
    \UT\eqbydef\Poly{k}(T)\times\left(
    \bigtimes_{F\in\Fh[T]}\Poly[d-1]{k}(F)
    \right),%
\end{equation}
cf. Figure~\ref{fig:DOFs}, and we use the underline notation $\sv=(\unv[T],(\unv[F])_{F\in\Fh[T]})$ for a generic element $\sv\in\UT$.
\begin{figure}
\centering	
  \includegraphics[height=3cm]{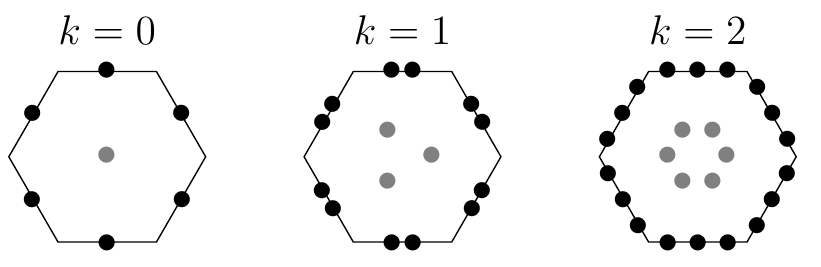}
  \caption{Degrees of freedom for $k\in\{0,1,2\}$. Shaded DOFs can be locally eliminated by static condensation.\label{fig:DOFs}}
\end{figure}
We define the local interpolation operator $\IT: W^{1,1}(T)\to\UT$ such that, for all $v\in W^{1,1}(T)$,
\begin{equation}
  \label{eq:IT}
  \IT v \eqbydef \left(\lproj[T]{k}v, (\lproj[F]{k}v)_{F\in\Fh[T]}\right).
\end{equation}

\begin{remark}[Domain for the interpolation operator]\label{rem:domain.interp}
The local interpolation operator is well-defined for functions $v\in W^{1,1}(T)$ since $v$ is clearly in $L^{1}(T)$, the domain of $\lproj[T]{k}$, and its trace on every face $F\in\Fh[T]$ is in $L^1(F)$, the domain of $\lproj[F]{k}$.
  In passing, in our convergence proofs we only need apply the interpolation operator to classically regular functions; cf., in particular, the proof of Theorem~\ref{th:conv.hho} given in Section~\ref{sec:conv.anal}.
  \end{remark}
  
Based on the local DOFs, we introduce reconstructions of the gradient and of the potential that will be instrumental in the formulation of the method.
In what follows, $(\cdot,\cdot)_T$ and $(\cdot,\cdot)_F$ denote the $L^2$-inner products on $T$ and $F$,
respectively. The same notation is used in the vector case $(L^2)^d$.
We define the local discrete gradient operator $\GT:\UT\to \Poly{k}(T)^d$ such that,
if $\sv\eqbydef(\unv[T],(\unv[F])_{F\in\Fh[T]})\in\UT$, then for all $\bphi\in \Poly{k}(T)^d$,
\begin{subequations}
  \label{eq:GT}
  \begin{align}
    \label{def:GT}    
    (\GT \sv,\bphi)_T
    &= (\GRAD \unv[T],\bphi)_T
    + \sum_{F\in\Fh[T]} (\unv[F]-\unv[T],\bphi\SCAL\normal_{TF})_F
    \\
    \label{def:GT.bis}
    &= -(\unv[T],\DIV\bphi)_T
      + \sum_{F\in\Fh[T]} (\unv[F], \bphi\SCAL\normal_{TF})_F.
  \end{align}
\end{subequations}
Recalling the definition~\eqref{eq:IT} of $\IT$, and using~\eqref{def:GT.bis} together with the definition~\eqref{def:lproj} of the $L^{2}$-projector, one can prove that the following commuting property holds: For all $v\in W^{1,1}(T)$,
\begin{equation}\label{eq:commut.GT}
  \GT\IT v = \lproj[T]{k} (\GRAD v),
\end{equation}
where $\lproj[T]{k}$ acts component-wise.
As a result, by~\eqref{eq:approx.lproj.Wsp} and \eqref{eq:approx.lproj.Wsp.trace}, $\GT\IT$ has optimal approximation properties in $\Poly{k}(T)^d$.
The local potential reconstruction operator $\pT:\UT\to\Poly{k+1}(T)$ is such that, for all $\sv\in\UT$, the gradient of $\pT\sv$ is
the orthogonal projection on $\nabla\Poly{k+1}(T)$ of $\GT\sv$, and the average of $\pT\sv$ over $T$ coincides with the average of $\unv[T]$,
  \begin{equation}\label{eq:pT}
    (\GRAD\pT\sv-\GT\sv,\GRAD w)_T=0\quad\forall w\in\Poly{k+1}(T)\text{\quad and\quad}
    \int_T (\pT\sv(\vec{x})-\unv[T](\vec{x}))d\vec{x} = 0.
\end{equation}
  For all $v\in H^1(T)$, we have the following Euler equation:
\begin{equation}\label{eq:euler.pT}
  (\GRAD(\pT\IT v - v), \GRAD w)_T = 0\qquad\forall w\in\Poly{k+1}(T),
\end{equation}
which shows that $\pT\IT$ is nothing but the usual elliptic projector on $\Poly{k+1}(T)$.

\subsection{Global degrees of freedom, interpolation and reconstructions}
Local DOFs are collected in the following global space obtained by patching interface values:
\begin{equation*}
  \Uh\eqbydef\left(
  \bigtimes_{T\in\Th}\Poly{k}(T)
  \right)\times
  \left(
  \bigtimes_{F\in\Fh}\Poly[d-1]{k}(F)
  \right).
\end{equation*}
We use the notation $\sv[h]=((\unv[T])_{T\in\Th},(\unv[F])_{F\in\Fh})$ for a generic element $\sv[h]\in\Uh$ and, for all $T\in\Th$, it is understood that $\sv[T]=(\unv[T],(\unv[F])_{F\in\Fh[T]})$ denotes the restriction of $\sv[h]$ to $\UT$.
The global interpolation operator $\Ih:W^{1,1}(\Omega)\to\Uh$ is defined such that, for all $v\in W^{1,1}(\Omega)$,
\begin{equation}\label{eq:Ih}
  \Ih v \eqbydef ( (\lproj[T]{k} v)_{T\in\Th}, (\lproj[F]{k} v)_{F\in\Fh} ).
\end{equation}
Interface DOFs are well-defined thanks to the regularity of functions in $W^{1,1}(\Omega)$.
With $\Poly{k}(\Th)$ usual broken polynomial space on $\Th$, for all $\sv[h]\in\Uh$ we denote by $\unv[h]$ the unique function in $\Poly{k}(\Th)$ such that
\begin{equation}\label{def:unvh}
  \restrto{\unv[h]}{T}=\unv[T]\qquad\forall T\in\Th.
\end{equation}
Finally, we introduce the global discrete gradient operator $\Gh:\Uh\to\Poly{k}(\Th)^d$ and potential reconstruction $\ph:\Uh\to\Poly{k+1}(\Th)$ such that, for all $\sv[h]\in\Uh$,
\begin{equation}\label{def:Gh.ph}
  \restrto{(\Gh\sv[h])}{T}=\GT\sv[T]\quad\mbox{ and }\quad \restrto{(\ph\sv[h])}{T}=\pT\sv[T]\qquad\forall T\in\Th.
\end{equation}

\subsection{Discrete problem and main results}

Define the following subspace of $\Uh$ which strongly incorporates the homogeneous Dirichlet boundary condition~\eqref{eq:strong:BC}:
\begin{equation}\label{def:UhD}
  \UhD\eqbydef\left\{
  \sv[h]\in\Uh\st \unv[F]=0\quad\forall F\in\Fhb
  \right\}.
\end{equation}
We consider the following approximation of \eqref{eq:weak}:
\begin{subequations}\label{def:hho.scheme}
\begin{equation}
  \label{eq:hho-glob}
  \mbox{Find $\su[h]\in \UhD$ such that, for any $\sv[h]\in \UhD$, }\asch(\su[h],\sv[h])
  = \int_{\Omega} f(\vec{x}) \unv[h](\vec{x})d\vec{x},
\end{equation}
where $A:\Uh\times\Uh\to\Real$ is assembled element-wise
\begin{equation}
	\label{eq:hho-assembly}
	\asch(\su[h],\sv[h])\eqbydef\sum_{T\in\Th} \asch_T(\su[T],\sv[T]),
\end{equation}
from the local contributions $A_T:\UT\times\UT\to\Real$, $T\in\Th$, defined such that
\begin{equation}
  \label{eq:hho-loc}
  \begin{aligned}
    \asch_T(\su,\sv)\eqbydef{}&\int_T \bfa(\vec{x},\unu[T](\vec{x}),\GT\su(\vec{x}))\cdot\GT\sv(\vec{x})d\vec{x} + s_T(\su,\sv),
    \\
    s_T(\su,\sv)\eqbydef{}&\sum_{F\in\Fh[T]}h_F^{1-p}\int_F 
    \left|\lproj[F]{k}(\unu[F]-\PT\su[T])(\vec{x})\right|^{p-2}
    \lproj[F]{k}(\unu[F]-\PT\su[T])(\vec{x})\lproj[F]{k}(\unv[F]-\PT\sv[T])(\vec{x})ds(\vec{x}),
  \end{aligned}
\end{equation}
with $\PT:\UT\to\Poly{k+1}(T)$ denoting a second potential reconstruction such that, for all $\sv\in\UT$,
\begin{equation}\label{eq:PT}
  \PT\sv\eqbydef\unv[T]+(\pT\sv-\lproj[T]{k}\pT\sv).
\end{equation}
\end{subequations}

\begin{remark} This elaborate expression for the stabilization contribution $s_T$ aims
at preserving the approximation qualities of the consistent contribution in $\asch_T$. As shown by
\eqref{eq:commut.GT}, $\GT$ is exactly the gradient on (interpolations of) polynomials of degree $\le k+1$
inside the element.
To preserve this exactness property in $\asch_T$, the stabilisation term $s_T$ must therefore vanish on (interpolations of) polynomials of degree $\le k+1$ inside the element.
The choice in \eqref{eq:hho-loc} is one option that satisfies this property; other options include penalizing instead of $\lproj[F]{k}(\unv[F]-\PT\sv[T])$ a combination of differences of the form $\lproj[F]{k}(\unv[F]-\pT\sv[T])$ and $\lproj[T]{k}(\unv[T]-\pT\sv[T])$, weighted according the exponent $p$ and their scaling properties with respect to the cell size.

On the contrary, the more naive choice consisting in penalizing the difference $(\unv[F]-\unv[T])$ would only ensure that this stabilisation vanishes on polynomials of degree $\le k$ inside the element. This would prevent, e.g., from attaining the optimal convergence orders proved in~\cite{Di-Pietro.Ern.ea:14} for the linear case with $p=2$.
\end{remark}%

\begin{remark}[Static condensation]\label{rem:stat.cond}
  Problem~\eqref{eq:hho-glob} is a system of nonlinear algebraic equations, which can be solved using an iterative algorithm.
  When first order (Newton-like) algorithms are used, element-based DOFs can be locally eliminated at each iteration by a standard static condensation procedure.
\end{remark}

\begin{remark}[Variants]\label{rem:variations}
  Following~\cite{Cockburn.Di-Pietro.ea:15}, one could replace the space $\UT$ of~\eqref{eq:UT} with
  $$
  \UT[l,k]\eqbydef
  \Poly{l}(T)\times\left\{
  \bigtimes_{F\in\Fh}\Poly[d-1]{k}(F)
  \right\},
  $$
  for $k\ge 0$ and $l\in\{k-1,k,k+1\}$.
  For the sake of simplicity, we only consider here the case $l=k-1$ when $k\ge 1$.
  For $k=0$ and $l=k-1$, some technical modifications (not detailed here) are required owing to the absence of element-based DOFs.
  The local reconstruction operators $\GT$ defined by~\eqref{eq:GT} and $\pT$ defined by~\eqref{eq:pT} still map on $\Poly{k}(T)^d$ and $\Poly{k+1}(T)$, respectively (their domain changes, but we keep the same notation for the sake of simplicity).
  A close inspection shows that both key properties~\eqref{eq:commut.GT} and~\eqref{eq:euler.pT} remain valid for the proposed choices for $l$.
  The second potential reconstruction operator $\PT$ defined by~\eqref{eq:PT}, on the other hand, is replaced by $\PT[l,k+1]:\UT[l,k]\to\Poly{k+1}(T)$ such that, for all $\sv\in\UT[l,k]$, $\PT[l,k+1]\sv\eqbydef\unv[T]+(\pT\sv-\lproj[T]{l}\pT\sv)$.
  The interest of the case $l=k+1$ is that it holds, for all $\sv\in\UT$, $\PT[k+1,k+1]\sv=\unv[T]$, and the stabilization contribution takes the simpler form
  $$
  s_T(\su,\sv)=
  \sum_{F\in\Fh[T]}h_F^{1-p}\int_F \left|\lproj[F]{k}(\unu[F]-\unu[T])(\vec{x})\right|^{p-2}
  \lproj[F]{k}(\unu[F]-\unu[T])(\vec{x})\lproj[F]{k}(\unv[F]-\unv[T])(\vec{x})ds(\vec{x}).
  $$
  This simplification, however, comes at the price of having more element-based DOFs, which leads in turn to more onerous local problems for both the computation of the operator reconstructions and the elimination of element-based unknowns by static condensation.
  We also notice that the choice $l=k+1$ is close in spirit to the Hybridizable Discontinuous Galerkin methods introduced in~\cite{Cockburn.Gopalakrishnan.ea:09} for a linear diffusion problem.
  The choice $l=k-1$, on the other hand, can be related to the High-Order Mimetic method introduced in~\cite{Lipnikov.Manzini:14} in the context of linear elliptic equations.
\end{remark}

We next state our main results for problem~\eqref{def:hho.scheme}.
The proofs are postponed to Section~\ref{sec:conv.anal}.

\begin{theorem}[Existence of a discrete solution]\label{th:exist.hho}
  Under Assumption \eqref{assum:gen}, there exists at least one solution $\su[h]\in\UhD$ to \eqref{def:hho.scheme}.
\end{theorem}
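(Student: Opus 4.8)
The natural approach is a topological degree / Brouwer fixed-point argument. The scheme asks to find a zero of a map on the finite-dimensional space $\UhD$, so I want to recast it accordingly and then invoke a standard surjectivity result for continuous maps that are "coercive" in an appropriate sense.

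Let me think through the structure.

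The discrete problem is: find $\su[h] \in \UhD$ such that $\asch(\su[h], \sv[h]) = \int_\Omega f \unv[h]$ for all $\sv[h] \in \UhD$. Since $\UhD$ is a finite-dimensional vector space (once we fix a basis, it's $\mathbb{R}^M$), I can define a map $\mathcal{F}: \UhD \to \UhD$ (identifying $\UhD$ with its dual via some inner product on the DOF space) by the requirement
$$\langle \mathcal{F}(\su[h]), \sv[h] \rangle = \asch(\su[h], \sv[h]) - \int_\Omega f \unv[h] \quad \forall \sv[h] \in \UhD.$$
Then a solution is exactly a zero of $\mathcal{F}$. So let me figure out the ingredients I need.

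**Step 1 (Continuity).** I need $\mathcal{F}$ to be continuous. The stabilization term $s_T$ is continuous in its arguments (it involves $|\cdot|^{p-2}(\cdot)$, which is continuous since $p>1$, composed with linear projection operators). The consistency term $\int_T \bfa(\vec{x}, \unu[T], \GT\su) \cdot \GT\sv$ is continuous because $\bfa$ is a Carathéodory function (continuity in $(s,\vec\xi)$ a.e., plus the growth bound (hyp:ag)) — a dominated-convergence / Krasnoselskii-type argument gives continuity of the associated integral operator. I'd cite (hyp:acarat) and (hyp:ag).

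**Step 2 (Coercivity of the diagonal).** The key quantitative input is a lower bound on $\asch(\su[h], \su[h])$ in terms of a norm of $\su[h]$ that blows up. Using coercivity (hyp:ac), the consistency term gives $\sum_T \int_T \bfa(\cdot, \unu[T], \GT\su)\cdot \GT\su \ge \coera \sum_T \|\GT\su\|_{L^p(T)}^p = \coera \|\Gh\su[h]\|_{L^p(\Omega)}^p$. The stabilization term $s_T(\su,\su) = \sum_F h_F^{1-p}\|\lproj[F]{k}(\unu[F] - \PT\su[T])\|_{L^p(F)}^p \ge 0$. So $\asch(\su[h],\su[h]) \ge \coera \|\Gh\su[h]\|_{L^p(\Omega)}^p + (\text{nonnegative stabilization})$. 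Now this is naturally the $p$-th power of a seminorm built from the discrete gradient plus jumps. I expect the paper has (or will state) a discrete $W^{1,p}$-type norm — call it $\|\su[h]\|_{\mathrm{dof}}$ (this is the "discrete $W^{1,p}$-like norm" advertised in the introduction, from Section~\ref{sec:dfa}) — for which $\asch(\su[h],\su[h]) \ge C\|\su[h]\|_{\mathrm{dof}}^p$, and a discrete Sobolev/Poincaré inequality controlling $\|\unv[h]\|_{L^{p}(\Omega)}$ (or $L^{p^*}$) by $\|\sv[h]\|_{\mathrm{dof}}$. This is where the discrete functional analysis tools enter.

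**Step 3 (A priori bound and conclusion).** Combining, for the right-hand side I use Hölder: $|\int_\Omega f\,\unu[h]| \le \|f\|_{L^{p'}}\|\unu[h]\|_{L^p} \le C\|f\|_{L^{p'}}\|\su[h]\|_{\mathrm{dof}}$ via the discrete Sobolev embedding. Therefore
$$\langle \mathcal{F}(\su[h]), \su[h]\rangle \ge C\|\su[h]\|_{\mathrm{dof}}^p - C'\|f\|_{L^{p'}}\|\su[h]\|_{\mathrm{dof}},$$
which, since $p>1$, is strictly positive on the sphere $\{\|\su[h]\|_{\mathrm{dof}} = R\}$ for $R$ large enough. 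By the standard corollary of Brouwer's theorem (a continuous map $\mathcal{F}$ on a finite-dimensional inner-product space with $\langle \mathcal{F}(x), x\rangle > 0$ on $\|x\| = R$ has a zero in the ball $\|x\| < R$), $\mathcal{F}$ vanishes somewhere, giving a discrete solution.

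**Main obstacle.** The one genuinely nontrivial point is establishing that the quantity controlled by $\asch(\su[h],\su[h])$ is in fact coercive for a bona fide norm on $\UhD$ — i.e. that $\|\Gh\su[h]\|_{L^p}^p$ plus the stabilization jumps actually dominates a norm, so that the coercivity estimate is not vacuous (it must vanish only for $\su[h] = 0$). This requires the discrete Poincaré/Sobolev inequality on $\UhD$ incorporating the boundary condition, together with the fact that the combined gradient-plus-jump seminorm is a norm on $\UhD$ (the boundary DOFs being pinned to zero removes the constant kernel). I'd invoke the discrete functional analysis results of Section~\ref{sec:dfa} for this; the continuity in Step~1 via the Carathéodory structure is technically delicate but standard, whereas the norm-equivalence/coercivity is the real crux. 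Everything else is a routine application of Brouwer's theorem.
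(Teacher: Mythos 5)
Your proposal is correct and follows essentially the same route as the paper: the paper also identifies $\UhD$ with a Euclidean space, defines the nonlinear map via $\langle\Phi(\sv[h]),\sw[h]\rangle_E=\asch(\sv[h],\sw[h])$, obtains continuity from \eqref{hyp:acarat}--\eqref{hyp:ag} and coercivity from \eqref{hyp:ac} combined with the norm equivalence \eqref{equiv:norms} (which is exactly the ``real crux'' you flag), and concludes by a Brouwer-type surjectivity theorem. The only cosmetic difference is that the paper keeps the right-hand side as a separate Riesz representative $\underline{\mathsf{y}}_h$ and uses equivalence of norms on the finite-dimensional space rather than the discrete Sobolev embedding at this stage, reserving the latter for the $h$-uniform a priori estimate.
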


\begin{theorem}[Convergence]\label{th:conv.hho}
  We assume \eqref{assum:gen}, and we let $(\Th)_{h\in\mathcal H}$ be an admissible mesh sequence.
  For all $h\in\mathcal H$, we let $\su[h]\in\UhD$ be a solution to \eqref{def:hho.scheme} on $\Th$.
  Then up to a subsequence as $h\to 0$, recalling the definition~\eqref{eq:p*} of the Sobolev index $p^*$,
  \begin{itemize}
  \item $\unu[h]\to u$ and $\ph\su[h]\to u$ strongly in $L^q(\Omega)$ for all $q<p^*$,
  \item $\Gh\su[h]\to \GRAD u$ weakly in $L^p(\Omega)^d$,
  \end{itemize}
  where $u\in W^{1,p}_0(\Omega)$ solves the weak formulation \eqref{eq:weak} of the PDE \eqref{eq:strong}.
  If we assume, moreover, that $\bfa$ is strictly monotone, that is the inequality
  in \eqref{hyp:am} is strict if $\vec{\xi}\neq\vec{\eta}$, then
  \begin{itemize}
  \item $\Gh\su[h]\to \GRAD u$ strongly in $L^p(\Omega)^d$,
  \end{itemize}
\end{theorem}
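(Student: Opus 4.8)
The plan is to run a discrete version of the Minty--Leray--Lions compactness argument, in four stages: a priori estimates, extraction of limits, identification of the limit as a weak solution, and (under strict monotonicity) the upgrade to strong gradient convergence. First I would test \eqref{eq:hho-glob} with $\sv[h]=\su[h]$. Since the stabilisation $s_h\eqbydef\sum_{T\in\Th}s_T$ is non-negative (its integrand is a non-negative $p$-th power) and $\bfa$ is coercive by \eqref{hyp:ac}, this gives $\coera\norm[L^p(\Omega)^d]{\Gh\su[h]}^p+s_h(\su[h],\su[h])\le\int_\Omega f\,\unu[h]$. Bounding the right-hand side with Hölder's inequality and the discrete $W^{1,p}$-Sobolev embedding of Section~\ref{sec:dfa}, I would deduce a uniform bound on the discrete $W^{1,p}$-like norm of $\su[h]$, hence on $\norm[L^p(\Omega)^d]{\Gh\su[h]}$, on $s_h(\su[h],\su[h])$, and on $\norm[L^q(\Omega)]{\unu[h]}$ for every $q<p^*$.

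Next, the growth assumption \eqref{hyp:ag} bounds $\bfa(\cdot,\unu[h],\Gh\su[h])$ in $L^{p'}(\Omega)^d$. Using the discrete compactness and Sobolev results of Section~\ref{sec:dfa}, I would extract a subsequence and find $u\in W^{1,p}_0(\Omega)$ such that $\unu[h]\to u$ strongly in $L^q(\Omega)$ for all $q<p^*$ (hence a.e.), $\Gh\su[h]\rightharpoonup\GRAD u$ weakly in $L^p(\Omega)^d$ — the identification of the weak limit with $\GRAD u$ being precisely the content of that compactness result — and $\bfa(\cdot,\unu[h],\Gh\su[h])\rightharpoonup\bchi$ weakly in $L^{p'}(\Omega)^d$ for some $\bchi$. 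The convergence $\ph\su[h]\to u$ follows because $\ph\su[h]-\unu[h]$ is controlled by $h$ times the uniformly bounded norm.

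The core is the identification $\bchi=\bfa(\cdot,u,\GRAD u)$. Passing to the limit in \eqref{eq:hho-glob} with $\sv[h]=\Ih\varphi$ for smooth $\varphi$, I would use the commuting property \eqref{eq:commut.GT} (so that $\Gh\Ih\varphi\to\GRAD\varphi$) and the vanishing $s_h(\su[h],\Ih\varphi)\to 0$, obtained from the Hölder-type bound $|s_h(\su[h],\Ih\varphi)|\le s_h(\su[h],\su[h])^{1/p'}s_h(\Ih\varphi,\Ih\varphi)^{1/p}$ and the consistency $s_h(\Ih\varphi,\Ih\varphi)\to 0$ (the stabilisation vanishes on interpolants of degree-$(k+1)$ polynomials and inherits optimal approximation through Lemmas~\ref{lem:Wkp.interp}--\ref{lem:Wkp.interp.trace}). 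This gives $\int_\Omega\bchi\cdot\GRAD\varphi=\int_\Omega f\varphi$, valid for all $\varphi\in W^{1,p}_0(\Omega)$ by density, so in particular $\int_\Omega\bchi\cdot\GRAD u=\int_\Omega f u$. Minty's trick then proceeds as follows: the energy balance (scheme with $\sv[h]=\su[h]$) and $s_h\ge 0$ give $\limsup_h\int_\Omega\bfa(\cdot,\unu[h],\Gh\su[h])\cdot\Gh\su[h]\le\int_\Omega f u=\int_\Omega\bchi\cdot\GRAD u$; combining this with the monotonicity \eqref{hyp:am} written for an arbitrary $\vec{w}\in L^p(\Omega)^d$, and letting $h\to 0$ (the weak-strong pairings being legitimate because $\bfa(\cdot,\unu[h],\vec{w})\to\bfa(\cdot,u,\vec{w})$ strongly in $L^{p'}$ by \eqref{hyp:ag} and $\unu[h]\to u$ a.e.), yields $\int_\Omega[\bchi-\bfa(\cdot,u,\vec{w})]\cdot[\GRAD u-\vec{w}]\ge 0$. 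Choosing $\vec{w}=\GRAD u-t\vec{\psi}$, dividing by $t>0$ and sending $t\to 0^+$ gives $\int_\Omega[\bchi-\bfa(\cdot,u,\GRAD u)]\cdot\vec{\psi}\ge 0$ for every $\vec{\psi}$, whence $\bchi=\bfa(\cdot,u,\GRAD u)$ and $u$ solves \eqref{eq:weak}.

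I expect the identification to be the main obstacle: making the limit passages in Minty's trick rigorous under the bare assumptions \eqref{assum:gen} hinges on the a.e. convergence $\unu[h]\to u$ combined with dominated convergence based on \eqref{hyp:ag}, and on the consistency of the stabilisation, which rests on the approximation Lemmas~\ref{lem:Wkp.interp}--\ref{lem:Wkp.interp.trace}. For the final statement, under strict monotonicity, I would consider $D_h\eqbydef\int_\Omega[\bfa(\cdot,\unu[h],\Gh\su[h])-\bfa(\cdot,\unu[h],\GRAD u)]\cdot[\Gh\su[h]-\GRAD u]\ge 0$: its four terms, treated with the limits above (now using $\bchi=\bfa(\cdot,u,\GRAD u)$), give $\limsup_h D_h\le 0$, so $D_h\to 0$. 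As the integrand is non-negative, it tends to $0$ in $L^1$, hence a.e. along a further subsequence, and strict monotonicity forces $\Gh\su[h]\to\GRAD u$ a.e. Finally, coercivity \eqref{hyp:ac} dominates $\coera|\Gh\su[h]|^p$ by $g_h\eqbydef\bfa(\cdot,\unu[h],\Gh\su[h])\cdot\Gh\su[h]\ge 0$; since $g_h\to\bchi\cdot\GRAD u$ a.e. with $\int_\Omega g_h\to\int_\Omega\bchi\cdot\GRAD u$ (from the $\limsup$ bound above and Fatou), a Scheff\'e/Vitali equi-integrability argument upgrades the a.e. convergence of $\Gh\su[h]$ to strong convergence in $L^p(\Omega)^d$.
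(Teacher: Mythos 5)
Your proposal follows essentially the same route as the paper's proof: the a priori bound from coercivity plus the discrete Sobolev embedding, extraction of limits via the discrete compactness result of Section~\ref{sec:dfa}, identification of the limit by Minty's trick using $\sv[h]=\Ih\varphi$ together with the consistency of the stabilisation, and strict monotonicity plus an equi-integrability argument for the strong gradient convergence. The only step you leave implicit is that, before arguing about adherence values of $(\Gh\su[h](\vec{x}))_h$ at a.e.\ point $\vec{x}$, one must first check that this sequence is pointwise bounded (otherwise $F_h(\vec{x})\to 0$ alone does not rule out escape to infinity); the paper obtains this from a lower bound on $F_h$ using the coercivity and growth assumptions, which you in any case invoke immediately afterwards for the Scheff\'e/Vitali upgrade, so the gap is easily filled.
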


\begin{remark}[Uniqueness]
  If $\bfa$ does not depend on $s$ and is strictly monotone, then the solutions to both the continuous problem \eqref{eq:weak} and its discrete counterpart \eqref{def:hho.scheme} are unique (see the discussion in Section \ref{sec:cont.setting}). In that case, the whole sequence of approximate solutions converges to the weak solution of \eqref{eq:strong}.
\end{remark}

\begin{remark}[Other boundary conditions]
  The results stated in Theorems~\ref{th:exist.hho}--\ref{th:conv.hho} are valid also when more general boundary conditions are considered (this is the case, e.g., in the numerical examples below).
  The modifications required to adapt the analysis to non-homogeneous Dirichlet and Neumann boundary conditions are briefly addressed in Section~\ref{sec:other.bcs}.
\end{remark}

\subsection{Numerical examples}\label{sec:num.ex}

To close this section, we provide a few examples to numerically evaluate the convergence properties of the method (a theoretical study of the convergence rates is postponed to a future work).
We consider the $p$-Laplace problem~\eqref{eq:bfa.plap}.
When $p=2$, we recover the usual (linear) Laplace operator, for which optimal convergence rates are proved in~\cite{Di-Pietro.Ern.ea:14}.
We consider the two-dimensional analytical solution originally proposed in~\cite[Section~4]{Andreianov.Boyer.ea:06}, corresponding to
$u(\vec{x})=\exp(x_1+\pi x_2)$ with suitable source term $f$ inferred from~\eqref{eq:strong:PDE}.
The domain is the unit square $\Omega=(0,1)^2$, and non-homogeneous Dirichlet boundary conditions inferred from the expression of $u$ are enforced on its boundary; cf.~\eqref{eq:hho-glob:nhd} for the precise formulation of the method in this case.
We compute the numerical solutions corresponding to polynomial degrees $k=0,\ldots,4$. 
The meshes used are the triangular and Cartesian mesh families 1 and 2 from the FVCA 5 benchmark~\cite{Herbin.Hubert:08}, and the distorted (predominantly) hexagonal mesh family of~\cite[Section~4.2.3]{Di-Pietro.Lemaire:14}; cf. Figure~\ref{fig:meshes}.
\begin{figure}
  \centering
  \includegraphics[height=3cm]{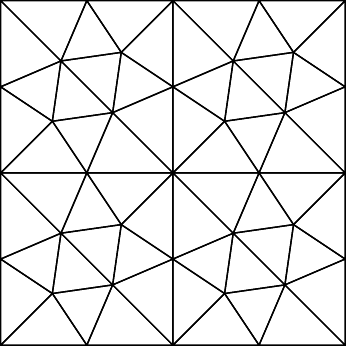}
  \hspace{1em}
  \includegraphics[height=3cm]{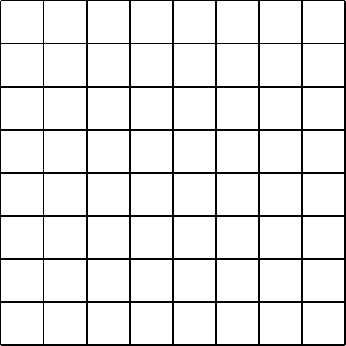}
  \hspace{1em}
  \includegraphics[height=3cm]{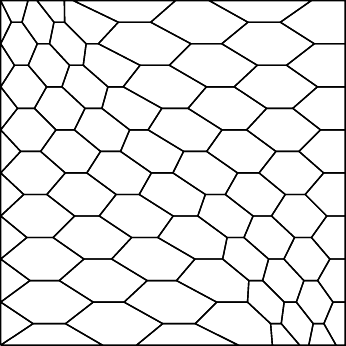}
  \caption{Meshes used in the numerical tests of Section~\ref{sec:num.ex}\label{fig:meshes}.}
\end{figure}

In Figures~\ref{fig:conv.GT:p=3} and~\ref{fig:conv.GT:p=4} we display the convergence of the error $\norm[L^p(\Omega)^d]{\Gh(\su[h]-\Ih u)}$ for $p=3$ and $p=4$, respectively.
In all the cases, we observe that increasing the polynomial degree $k$ improves the convergence rate.
The results obtained in~\cite{Barrett.Liu:94,Andreianov.Boyer.ea:04,Andreianov.Boyer.ea:06} for lowest-order schemes suggest, however, that we should not expect optimal convergence properties in $\Poly{k+1}(\Th)$ except for the linear case $p=2$.
Instead, the order of convergence is expected to depend on both the regularity of the exact solution and the index $p$.
Further numerical tests (not reported here for the sake of brevity) show that the convergence rate improves with $k$ also when considering ``degenerate'' cases (i.e., solutions with a gradient that vanishes in part of the domain, in which case the diffusive properties of \eqref{eq:strong} degenerate), although the gain is, in general, less relevant.
Finally, for the sake of completeness, we report in Figure~\ref{fig:conv.GT:p=4.l=k+1} the numerical results obtained for $p=4$ with the method discussed in Remark~\ref{rem:variations} and corresponding to $l=k+1$.
In this case, taking the element-based DOFs in $\Poly{k+1}(T)$ does not seem to bring any significant advantage in terms of convergence (compare with Figure~\ref{fig:conv.GT:p=4}).

\begin{figure}
  \centering
  \begin{small}
    \tikzexternaldisable
    \ref{conv.legend.p3}
    \tikzexternalenable
  \end{small}
  \\ \vspace{0.25cm}
  \begin{minipage}[b]{0.30\linewidth}
    \begin{tikzpicture}[scale=0.55]
      \begin{loglogaxis}[
          xmin = 0.001,          
          legend columns=-1,
          legend to name=conv.legend.p3,
          legend style={/tikz/every even column/.append style={column sep=0.35cm}}
        ]
        \addplot table[x=meshsize,y={create col/linear regression={y=err_p3}}] {HHOk_plap_0_mesh1.dat}
        coordinate [pos=0.75] (A)
        coordinate [pos=1.00] (B);
        \xdef\slopea{\pgfplotstableregressiona}
        \draw (A) -| (B) node[pos=0.75,anchor=east] {\pgfmathprintnumber{\slopea}};
        \addplot table[x=meshsize,y={create col/linear regression={y=err_p3}}] {HHOk_plap_1_mesh1.dat}
        coordinate [pos=0.75] (A)
        coordinate [pos=1.00] (B);
        \xdef\slopeb{\pgfplotstableregressiona}
        \draw (A) -| (B) node[pos=0.75,anchor=east] {\pgfmathprintnumber{\slopeb}};
        \addplot table[x=meshsize,y={create col/linear regression={y=err_p3}}] {HHOk_plap_2_mesh1.dat}
        coordinate [pos=0.75] (A)
        coordinate [pos=1.00] (B);
        \xdef\slopec{\pgfplotstableregressiona}
        \draw (A) -| (B) node[pos=0.75,anchor=east] {\pgfmathprintnumber{\slopec}};
        \addplot table[x=meshsize,y={create col/linear regression={y=err_p3}}] {HHOk_plap_3_mesh1.dat}
        coordinate [pos=0.75] (A)
        coordinate [pos=1.00] (B);
        \xdef\sloped{\pgfplotstableregressiona}
        \draw (A) -| (B) node[pos=0.75,anchor=east] {\pgfmathprintnumber{\sloped}};
        \addplot table[x=meshsize,y={create col/linear regression={y=err_p3}}] {HHOk_plap_4_mesh1.dat}
        coordinate [pos=0.75] (A)
        coordinate [pos=1.00] (B);
        \xdef\slopee{\pgfplotstableregressiona}
        \draw (A) -| (B) node[pos=0.75,anchor=east] {\pgfmathprintnumber{\slopee}};
        \legend{$k=0$,$k=1$,$k=2$,$k=3$,$k=4$};
      \end{loglogaxis}
    \end{tikzpicture}
    \subcaption{Triangular mesh family}
  \end{minipage}
  \begin{minipage}[b]{0.30\linewidth}
    \begin{tikzpicture}[scale=0.55]
      \begin{loglogaxis}[
          xmin = 0.002,          
          legend style = {
            legend pos = outer north east
          }
        ]
        \addplot table[x=meshsize,y={create col/linear regression={y=err_p3}}] {HHOk_plap_0_mesh2.dat}
        coordinate [pos=0.75] (A)
        coordinate [pos=1.00] (B);
        \xdef\slopea{\pgfplotstableregressiona}
        \draw (A) -| (B) node[pos=0.75,anchor=east] {\pgfmathprintnumber{\slopea}};
        \addplot table[x=meshsize,y={create col/linear regression={y=err_p3}}] {HHOk_plap_1_mesh2.dat}
        coordinate [pos=0.75] (A)
        coordinate [pos=1.00] (B);
        \xdef\slopeb{\pgfplotstableregressiona}
        \draw (A) -| (B) node[pos=0.75,anchor=east] {\pgfmathprintnumber{\slopeb}};
        \addplot table[x=meshsize,y={create col/linear regression={y=err_p3}}] {HHOk_plap_2_mesh2.dat}
        coordinate [pos=0.75] (A)
        coordinate [pos=1.00] (B);
        \xdef\slopec{\pgfplotstableregressiona}
        \draw (A) -| (B) node[pos=0.75,anchor=east] {\pgfmathprintnumber{\slopec}};
        \addplot table[x=meshsize,y={create col/linear regression={y=err_p3}}] {HHOk_plap_3_mesh2.dat}
        coordinate [pos=0.75] (A)
        coordinate [pos=1.00] (B);
        \xdef\sloped{\pgfplotstableregressiona}
        \draw (A) -| (B) node[pos=0.75,anchor=east] {\pgfmathprintnumber{\sloped}};
        \addplot table[x=meshsize,y={create col/linear regression={y=err_p3}}] {HHOk_plap_4_mesh2.dat}
        coordinate [pos=0.75] (A)
        coordinate [pos=1.00] (B);
        \xdef\slopee{\pgfplotstableregressiona}
        \draw (A) -| (B) node[pos=0.75,anchor=east] {\pgfmathprintnumber{\slopee}};
      \end{loglogaxis}
    \end{tikzpicture}
    \subcaption{Cartesian mesh family}
  \end{minipage}
  \begin{minipage}[b]{0.30\linewidth}
    \begin{tikzpicture}[scale=0.55]
      \begin{loglogaxis}[
          xmin = 0.002,          
          legend style = {
            legend pos = outer north east
          }
        ]
        \addplot table[x=meshsize,y={create col/linear regression={y=err_p3}}] {HHOk_plap_0_pi6_tiltedhexagonal.dat}
        coordinate [pos=0.75] (A)
        coordinate [pos=1.00] (B);
        \xdef\slopea{\pgfplotstableregressiona}
        \draw (A) -| (B) node[pos=0.75,anchor=east] {\pgfmathprintnumber{\slopea}};
        \addplot table[x=meshsize,y={create col/linear regression={y=err_p3}}] {HHOk_plap_1_pi6_tiltedhexagonal.dat}
        coordinate [pos=0.75] (A)
        coordinate [pos=1.00] (B);
        \xdef\slopeb{\pgfplotstableregressiona}
        \draw (A) -| (B) node[pos=0.75,anchor=east] {\pgfmathprintnumber{\slopeb}};
        \addplot table[x=meshsize,y={create col/linear regression={y=err_p3}}] {HHOk_plap_2_pi6_tiltedhexagonal.dat}
        coordinate [pos=0.75] (A)
        coordinate [pos=1.00] (B);
        \xdef\slopec{\pgfplotstableregressiona}
        \draw (A) -| (B) node[pos=0.75,anchor=east] {\pgfmathprintnumber{\slopec}};
        \addplot table[x=meshsize,y={create col/linear regression={y=err_p3}}] {HHOk_plap_3_pi6_tiltedhexagonal.dat}
        coordinate [pos=0.75] (A)
        coordinate [pos=1.00] (B);
        \xdef\sloped{\pgfplotstableregressiona}
        \draw (A) -| (B) node[pos=0.75,anchor=east] {\pgfmathprintnumber{\sloped}};
        \addplot table[x=meshsize,y={create col/linear regression={y=err_p3}}] {HHOk_plap_4_pi6_tiltedhexagonal.dat}
        coordinate [pos=0.75] (A)
        coordinate [pos=1.00] (B);
        \xdef\slopee{\pgfplotstableregressiona}
        \draw (A) -| (B) node[pos=0.75,anchor=east] {\pgfmathprintnumber{\slopee}};
      \end{loglogaxis}
    \end{tikzpicture}
    \subcaption{Hexagonal mesh family}
  \end{minipage}  
  \caption{$\norm[L^p(\Omega)^d]{\Gh(\su[h]-\Ih u)}$ vs. $h$, $p=3$.\label{fig:conv.GT:p=3}}
\end{figure}

\begin{figure}
  \centering
  \begin{small}
    \tikzexternaldisable
    \ref{conv.legend.p4}
    \tikzexternalenable
  \end{small}
  \\ \vspace{0.25cm}
  \begin{minipage}[b]{0.30\linewidth}
    \begin{tikzpicture}[scale=0.55]
      \begin{loglogaxis}[
          xmin = 0.001,          
          legend columns=-1,
          legend to name=conv.legend.p4,
          legend style={/tikz/every even column/.append style={column sep=0.35cm}}
        ]
        \addplot table[x=meshsize,y={create col/linear regression={y=err_p4}}] {HHOk_plap_0_mesh1.dat}
        coordinate [pos=0.75] (A)
        coordinate [pos=1.00] (B);
        \xdef\slopea{\pgfplotstableregressiona}
        \draw (A) -| (B) node[pos=0.75,anchor=east] {\pgfmathprintnumber{\slopea}};
        \addplot table[x=meshsize,y={create col/linear regression={y=err_p4}}] {HHOk_plap_1_mesh1.dat}
        coordinate [pos=0.75] (A)
        coordinate [pos=1.00] (B);
        \xdef\slopeb{\pgfplotstableregressiona}
        \draw (A) -| (B) node[pos=0.75,anchor=east] {\pgfmathprintnumber{\slopeb}};
        \addplot table[x=meshsize,y={create col/linear regression={y=err_p4}}] {HHOk_plap_2_mesh1.dat}
        coordinate [pos=0.75] (A)
        coordinate [pos=1.00] (B);
        \xdef\slopec{\pgfplotstableregressiona}
        \draw (A) -| (B) node[pos=0.75,anchor=east] {\pgfmathprintnumber{\slopec}};
        \addplot table[x=meshsize,y={create col/linear regression={y=err_p4}}] {HHOk_plap_3_mesh1.dat}
        coordinate [pos=0.75] (A)
        coordinate [pos=1.00] (B);
        \xdef\sloped{\pgfplotstableregressiona}
        \draw (A) -| (B) node[pos=0.75,anchor=east] {\pgfmathprintnumber{\sloped}};
        \addplot table[x=meshsize,y={create col/linear regression={y=err_p4}}] {HHOk_plap_4_mesh1.dat}
        coordinate [pos=0.75] (A)
        coordinate [pos=1.00] (B);
        \xdef\slopee{\pgfplotstableregressiona}
        \draw (A) -| (B) node[pos=0.75,anchor=east] {\pgfmathprintnumber{\slopee}};
        \legend{$k=0$,$k=1$,$k=2$,$k=3$,$k=4$};
      \end{loglogaxis}
    \end{tikzpicture}
    \subcaption{Triangular mesh family}
  \end{minipage}
  \begin{minipage}[b]{0.30\linewidth}
    \begin{tikzpicture}[scale=0.55]
      \begin{loglogaxis}[
          xmin = 0.002,          
          legend style = {
            legend pos = outer north east
          }
        ]
        \addplot table[x=meshsize,y={create col/linear regression={y=err_p4}}] {HHOk_plap_0_mesh2.dat}
        coordinate [pos=0.75] (A)
        coordinate [pos=1.00] (B);
        \xdef\slopea{\pgfplotstableregressiona}
        \draw (A) -| (B) node[pos=0.75,anchor=east] {\pgfmathprintnumber{\slopea}};
        \addplot table[x=meshsize,y={create col/linear regression={y=err_p4}}] {HHOk_plap_1_mesh2.dat}
        coordinate [pos=0.75] (A)
        coordinate [pos=1.00] (B);
        \xdef\slopeb{\pgfplotstableregressiona}
        \draw (A) -| (B) node[pos=0.75,anchor=east] {\pgfmathprintnumber{\slopeb}};
        \addplot table[x=meshsize,y={create col/linear regression={y=err_p4}}] {HHOk_plap_2_mesh2.dat}
        coordinate [pos=0.75] (A)
        coordinate [pos=1.00] (B);
        \xdef\slopec{\pgfplotstableregressiona}
        \draw (A) -| (B) node[pos=0.75,anchor=east] {\pgfmathprintnumber{\slopec}};
        \addplot table[x=meshsize,y={create col/linear regression={y=err_p4}}] {HHOk_plap_3_mesh2.dat}
        coordinate [pos=0.75] (A)
        coordinate [pos=1.00] (B);
        \xdef\sloped{\pgfplotstableregressiona}
        \draw (A) -| (B) node[pos=0.75,anchor=east] {\pgfmathprintnumber{\sloped}};
        \addplot table[x=meshsize,y={create col/linear regression={y=err_p4}}] {HHOk_plap_4_mesh2.dat}
        coordinate [pos=0.75] (A)
        coordinate [pos=1.00] (B);
        \xdef\slopee{\pgfplotstableregressiona}
        \draw (A) -| (B) node[pos=0.75,anchor=east] {\pgfmathprintnumber{\slopee}};
      \end{loglogaxis}
    \end{tikzpicture}
    \subcaption{Cartesian mesh family}
  \end{minipage}
  \begin{minipage}[b]{0.30\linewidth}
    \begin{tikzpicture}[scale=0.55]
      \begin{loglogaxis}[
          xmin = 0.002,          
          legend style = {
            legend pos = outer north east
          }
        ]
        \addplot table[x=meshsize,y={create col/linear regression={y=err_p4}}] {HHOk_plap_0_pi6_tiltedhexagonal.dat}
        coordinate [pos=0.75] (A)
        coordinate [pos=1.00] (B);
        \xdef\slopea{\pgfplotstableregressiona}
        \draw (A) -| (B) node[pos=0.75,anchor=east] {\pgfmathprintnumber{\slopea}};
        \addplot table[x=meshsize,y={create col/linear regression={y=err_p4}}] {HHOk_plap_1_pi6_tiltedhexagonal.dat}
        coordinate [pos=0.75] (A)
        coordinate [pos=1.00] (B);
        \xdef\slopeb{\pgfplotstableregressiona}
        \draw (A) -| (B) node[pos=0.75,anchor=east] {\pgfmathprintnumber{\slopeb}};
        \addplot table[x=meshsize,y={create col/linear regression={y=err_p4}}] {HHOk_plap_2_pi6_tiltedhexagonal.dat}
        coordinate [pos=0.75] (A)
        coordinate [pos=1.00] (B);
        \xdef\slopec{\pgfplotstableregressiona}
        \draw (A) -| (B) node[pos=0.75,anchor=east] {\pgfmathprintnumber{\slopec}};
        \addplot table[x=meshsize,y={create col/linear regression={y=err_p4}}] {HHOk_plap_3_pi6_tiltedhexagonal.dat}
        coordinate [pos=0.75] (A)
        coordinate [pos=1.00] (B);
        \xdef\sloped{\pgfplotstableregressiona}
        \draw (A) -| (B) node[pos=0.75,anchor=east] {\pgfmathprintnumber{\sloped}};
        \addplot table[x=meshsize,y={create col/linear regression={y=err_p4}}] {HHOk_plap_4_pi6_tiltedhexagonal.dat}
        coordinate [pos=0.75] (A)
        coordinate [pos=1.00] (B);
        \xdef\slopee{\pgfplotstableregressiona}
        \draw (A) -| (B) node[pos=0.75,anchor=east] {\pgfmathprintnumber{\slopee}};
      \end{loglogaxis}
    \end{tikzpicture}
    \subcaption{Hexagonal mesh family}
  \end{minipage}  
  \caption{$\norm[L^p(\Omega)^d]{\Gh(\su[h]-\Ih u)}$ vs. $h$, $p=4$.\label{fig:conv.GT:p=4}}
\end{figure}
\begin{figure}
  \centering
  \begin{small}
    \tikzexternaldisable
    \ref{conv.legend.p3.k+1}
    \tikzexternalenable
  \end{small}
  \\ \vspace{0.25cm}
  \begin{minipage}[b]{0.30\linewidth}
    \begin{tikzpicture}[scale=0.55]
      \begin{loglogaxis}[
          xmin = 0.001,          
          legend columns=-1,
          legend to name=conv.legend.p3.k+1,
          legend style={/tikz/every even column/.append style={column sep=0.35cm}}
        ]
        \addplot table[x=meshsize,y={create col/linear regression={y=err_p4}}] {HHOkpo_plap_0_mesh1.dat}
        coordinate [pos=0.75] (A)
        coordinate [pos=1.00] (B);
        \xdef\slopea{\pgfplotstableregressiona}
        \draw (A) -| (B) node[pos=0.75,anchor=east] {\pgfmathprintnumber{\slopea}};
        \addplot table[x=meshsize,y={create col/linear regression={y=err_p4}}] {HHOkpo_plap_1_mesh1.dat}
        coordinate [pos=0.75] (A)
        coordinate [pos=1.00] (B);
        \xdef\slopeb{\pgfplotstableregressiona}
        \draw (A) -| (B) node[pos=0.75,anchor=east] {\pgfmathprintnumber{\slopeb}};
        \addplot table[x=meshsize,y={create col/linear regression={y=err_p4}}] {HHOkpo_plap_2_mesh1.dat}
        coordinate [pos=0.75] (A)
        coordinate [pos=1.00] (B);
        \xdef\slopec{\pgfplotstableregressiona}
        \draw (A) -| (B) node[pos=0.75,anchor=east] {\pgfmathprintnumber{\slopec}};
        \addplot table[x=meshsize,y={create col/linear regression={y=err_p4}}] {HHOkpo_plap_3_mesh1.dat}
        coordinate [pos=0.75] (A)
        coordinate [pos=1.00] (B);
        \xdef\sloped{\pgfplotstableregressiona}
        \draw (A) -| (B) node[pos=0.75,anchor=east] {\pgfmathprintnumber{\sloped}};
        \addplot table[x=meshsize,y={create col/linear regression={y=err_p4}}] {HHOkpo_plap_4_mesh1.dat}
        coordinate [pos=0.75] (A)
        coordinate [pos=1.00] (B);
        \xdef\slopee{\pgfplotstableregressiona}
        \draw (A) -| (B) node[pos=0.75,anchor=east] {\pgfmathprintnumber{\slopee}};
        \legend{$k=0$,$k=1$,$k=2$,$k=3$,$k=4$};
      \end{loglogaxis}
    \end{tikzpicture}
    \subcaption{Triangular mesh family}
  \end{minipage}
  \begin{minipage}[b]{0.30\linewidth}
    \begin{tikzpicture}[scale=0.55]
      \begin{loglogaxis}[
          xmin = 0.002,          
          legend style = {
            legend pos = outer north east
          }
        ]
        \addplot table[x=meshsize,y={create col/linear regression={y=err_p4}}] {HHOkpo_plap_0_mesh2.dat}
        coordinate [pos=0.75] (A)
        coordinate [pos=1.00] (B);
        \xdef\slopea{\pgfplotstableregressiona}
        \draw (A) -| (B) node[pos=0.75,anchor=east] {\pgfmathprintnumber{\slopea}};
        \addplot table[x=meshsize,y={create col/linear regression={y=err_p4}}] {HHOkpo_plap_1_mesh2.dat}
        coordinate [pos=0.75] (A)
        coordinate [pos=1.00] (B);
        \xdef\slopeb{\pgfplotstableregressiona}
        \draw (A) -| (B) node[pos=0.75,anchor=east] {\pgfmathprintnumber{\slopeb}};
        \addplot table[x=meshsize,y={create col/linear regression={y=err_p4}}] {HHOkpo_plap_2_mesh2.dat}
        coordinate [pos=0.75] (A)
        coordinate [pos=1.00] (B);
        \xdef\slopec{\pgfplotstableregressiona}
        \draw (A) -| (B) node[pos=0.75,anchor=east] {\pgfmathprintnumber{\slopec}};
        \addplot table[x=meshsize,y={create col/linear regression={y=err_p4}}] {HHOkpo_plap_3_mesh2.dat}
        coordinate [pos=0.75] (A)
        coordinate [pos=1.00] (B);
        \xdef\sloped{\pgfplotstableregressiona}
        \draw (A) -| (B) node[pos=0.75,anchor=east] {\pgfmathprintnumber{\sloped}};
        \addplot table[x=meshsize,y={create col/linear regression={y=err_p4}}] {HHOkpo_plap_4_mesh2.dat}
        coordinate [pos=0.75] (A)
        coordinate [pos=1.00] (B);
        \xdef\slopee{\pgfplotstableregressiona}
        \draw (A) -| (B) node[pos=0.75,anchor=east] {\pgfmathprintnumber{\slopee}};
      \end{loglogaxis}
    \end{tikzpicture}
    \subcaption{Cartesian mesh family}
  \end{minipage}
  \begin{minipage}[b]{0.30\linewidth}
    \begin{tikzpicture}[scale=0.55]
      \begin{loglogaxis}[
          xmin = 0.002,          
          legend style = {
            legend pos = outer north east
          }
        ]
        \addplot table[x=meshsize,y={create col/linear regression={y=err_p4}}] {HHOkpo_plap_0_pi6_tiltedhexagonal.dat}
        coordinate [pos=0.75] (A)
        coordinate [pos=1.00] (B);
        \xdef\slopea{\pgfplotstableregressiona}
        \draw (A) -| (B) node[pos=0.75,anchor=east] {\pgfmathprintnumber{\slopea}};
        \addplot table[x=meshsize,y={create col/linear regression={y=err_p4}}] {HHOkpo_plap_1_pi6_tiltedhexagonal.dat}
        coordinate [pos=0.75] (A)
        coordinate [pos=1.00] (B);
        \xdef\slopeb{\pgfplotstableregressiona}
        \draw (A) -| (B) node[pos=0.75,anchor=east] {\pgfmathprintnumber{\slopeb}};
        \addplot table[x=meshsize,y={create col/linear regression={y=err_p4}}] {HHOkpo_plap_2_pi6_tiltedhexagonal.dat}
        coordinate [pos=0.75] (A)
        coordinate [pos=1.00] (B);
        \xdef\slopec{\pgfplotstableregressiona}
        \draw (A) -| (B) node[pos=0.75,anchor=east] {\pgfmathprintnumber{\slopec}};
        \addplot table[x=meshsize,y={create col/linear regression={y=err_p4}}] {HHOkpo_plap_3_pi6_tiltedhexagonal.dat}
        coordinate [pos=0.75] (A)
        coordinate [pos=1.00] (B);
        \xdef\sloped{\pgfplotstableregressiona}
        \draw (A) -| (B) node[pos=0.75,anchor=east] {\pgfmathprintnumber{\sloped}};
        \addplot table[x=meshsize,y={create col/linear regression={y=err_p4}}] {HHOkpo_plap_4_pi6_tiltedhexagonal.dat}
        coordinate [pos=0.75] (A)
        coordinate [pos=1.00] (B);
        \xdef\slopee{\pgfplotstableregressiona}
        \draw (A) -| (B) node[pos=0.75,anchor=east] {\pgfmathprintnumber{\slopee}};
      \end{loglogaxis}
    \end{tikzpicture}
    \subcaption{Hexagonal mesh family}
  \end{minipage}  
  \caption{$\norm[L^p(\Omega)^d]{\Gh(\su[h]-\Ih u)}$ vs. $h$, $p=4$ for the variant of the method discussed in Remark~\ref{rem:variations} and corresponding to $l=k+1$.\label{fig:conv.GT:p=4.l=k+1}}
\end{figure}


\section{Discrete functional analysis tools in hybrid polynomial spaces}\label{sec:dfa}

This section collects discrete functional analysis results on hybrid polynomial spaces that are used in the convergence analysis of Section~\ref{sec:conv.anal}.

\subsection{Discrete $W^{1,p}$-norms}

We introduce the following discrete counterpart of the $W^{1,p}$-seminorm on $\Uh$:
\begin{equation}\label{def:norm.1}
  \norm[1,p,h]{\sv[h]}\eqbydef\left(\sum_{T\in\Th} \norm[1,p,T]{\sv}^p\right)^{\frac1p},
\end{equation}
where the local seminorm $\norm[1,p,T]{{\cdot}}$ on $\UT$ is defined by
\begin{equation}\label{def:norm.1Tp}
  \norm[1,p,T]{\sv}\eqbydef\left(
  \norm[L^p(T)^{d}]{\GRAD \unv[T]}^p
  + \sum_{F\in\Fh[T]}h_F^{1-p}\norm[L^p(F)]{\unv[F]-\unv[T]}^p
  \right)^{\frac1p}.
\end{equation}
It can be checked that the map $\norm[1,p,h]{{\cdot}}$ defines a norm on $\UhD$.
We next show uniform equivalence between the local seminorm defined by~\eqref{def:norm.1Tp} and two local $W^{1,p}$-seminorms defined using the discrete gradient and potential reconstructions (cf.~\eqref{def:GT} and~\eqref{eq:pT}, respectively) and the penalty contribution $s_{T}$ (cf.~\eqref{eq:hho-loc}).
This essentially proves stability for the discrete problem~\eqref{eq:hho-glob} in terms of the $\norm[1,p,h]{{\cdot}}$-norm.
The argument hinges on the following direct and reverse Lebesgue embeddings, whose proof is postponed to Appendix~\ref{sec:est.loc.space}.

\begin{restatable}[Direct and reverse Lebesgue embeddings]{lemma}{lebemb}\label{lem:lebemb}
  Let $U$ be a measurable subset of $\Real^N$ such that \eqref{eq:reg.U} holds.
  Let $k\in\mathbb{N}$ and $q,m\in [1,+\infty]$.
Then, 
\begin{equation}\label{rev:leb}
\forall w\in\Poly[N]{k}(U)\,:\,
\norm[L^q(U)]{w}\approx |U|^{\frac{1}{q}-\frac{1}{m}}
\norm[L^m(U)]{w},
\end{equation}
where $A\approx B$ means that there is a real $M>0$ only depending on $N$, $k$, $\delta$, $q$ and $m$ such that $M^{-1} A\le B\le MA$.
\end{restatable}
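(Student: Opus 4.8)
The plan is to prove the equivalence \eqref{rev:leb} by first reducing to a reference configuration via a scaling argument, and then exploiting the finite-dimensionality of the polynomial space together with the geometric regularity \eqref{eq:reg.U}. The key observation is that on a \emph{fixed-dimensional} space such as $\Poly[N]{k}(U)$, all norms are equivalent, and the only subtlety is tracking how the equivalence constants depend on the geometry of $U$ (through $\delta$ and $|U|$) and on the exponents $q,m$.

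First I would rescale to unit diameter. Introduce the dilation $\vec{x}\mapsto h_U^{-1}(\vec{x}-\vec{x}_0)$ (for some fixed $\vec{x}_0\in U$) mapping $U$ onto a set $\widehat{U}$ of diameter $1$ and inradius bounded below by $\delta$. Under this map polynomials of degree $\le k$ are preserved, and the $L^q$-norm transforms by an explicit power of $h_U$: for $w\in\Poly[N]{k}(U)$ with corresponding $\widehat{w}\in\Poly[N]{k}(\widehat{U})$ one has $\norm[L^q(U)]{w}=h_U^{N/q}\norm[L^q(\widehat{U})]{\widehat{w}}$, and similarly for $m$. Since $|U|\approx h_U^N$ (with constants depending only on $N$ and $\delta$, using that $U$ contains a ball of radius $\delta h_U$ and is contained in a ball of radius $h_U$), the prefactor $|U|^{1/q-1/m}$ matches the power $h_U^{N/q-N/m}$ arising from the two transformations, up to constants depending only on $N,\delta,q,m$. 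Thus it suffices to prove $\norm[L^q(\widehat{U})]{\widehat{w}}\approx\norm[L^m(\widehat{U})]{\widehat{w}}$ uniformly over all admissible normalized sets $\widehat{U}$.

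For the normalized estimate, the central difficulty is that $\widehat{U}$ is not a single fixed set but ranges over a whole family (diameter $1$, inradius $\ge\delta$), so a naive ``equivalence of norms on a fixed finite-dimensional space'' does not immediately apply. The plan here is to bound both $L^q(\widehat{U})$ and $L^m(\widehat{U})$ norms above and below by the $L^2$-norm over a \emph{fixed} ball $B$ of radius $\delta$ contained in $\widehat{U}$ (after translation) and over the fixed unit ball $B_1$ containing $\widehat{U}$. Because $B\subset\widehat{U}\subset B_1$ and all sets have comparable measure, monotonicity of the integral gives $\norm[L^r(B)]{\widehat{w}}\le\norm[L^r(\widehat{U})]{\widehat{w}}\le\norm[L^r(B_1)]{\widehat{w}}$ for any exponent $r$, and on the \emph{fixed} balls $B,B_1$ all the $\Poly[N]{k}$-norms for different exponents are equivalent with constants depending only on $N,k$ and the exponents (finite dimension). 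The one point requiring care is the lower bound: I must rule out a polynomial that is large on $B_1\setminus\widehat{U}$ but small on the inscribed ball $B$; this is handled by the uniform lower bound on the inradius, which guarantees $\widehat{U}$ always contains a ball of the fixed radius $\delta$ on which the $\Poly[N]{k}$-norm controls the full polynomial. The main obstacle, as I see it, is precisely this uniform-over-the-family argument; the scaling bookkeeping is routine, but one must argue that the norm-equivalence constants can be chosen independently of the particular shape of $\widehat{U}$, which is where the non-degeneracy hypothesis \eqref{eq:reg.U} and the restriction to polynomial spaces (rather than general finite element spaces) are essential.

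\textbf{Direct versus reverse.} I note finally that the statement bundles together what are conventionally the ``direct'' embedding (larger exponent controlled by smaller, which on a bounded set is the nontrivial \emph{reverse} direction in the sense of the paper) and the trivial H\"older direction; the scaling argument above treats both simultaneously since it yields a two-sided estimate $M^{-1}A\le B\le MA$ in one stroke, rather than proving the two inequalities separately.
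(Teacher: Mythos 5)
Your proposal is correct and follows essentially the same route as the paper's proof: rescale $U$ to a normalized set sandwiched between the fixed balls $B(0,\delta)$ and $B(0,1)$ (with $|U|\approx h_U^N$ absorbing the Jacobian factors), then invoke equivalence of the $L^q(B(0,1))$- and $L^m(B(0,\delta))$-norms on the finite-dimensional space $\Poly[N]{k}$, which are genuine norms since a polynomial vanishing on a ball vanishes identically. The only cosmetic difference is that the paper proves a single inequality and concludes by the symmetry of $q$ and $m$, whereas you phrase the sandwich as giving both directions at once.
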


We are now ready to prove the norm equivalence.

\begin{lemma}[Equivalence of discrete $W^{1,p}$-seminorms]\label{lem:equivnorms}
  Let $(\Th)_{h\in\mathcal{H}}$ be an admissible mesh sequence and $k\in\mathbb{N}$.
  Let $T\in\Th$, $p\in \lbrack 1,+\infty)$, and denote by $\seminorm[s,p,T]{{\cdot}}$ the local face seminorm such that, for all $\sv\in\UT$, recalling the definition~\eqref{eq:hho-loc} of $s_T$,
  \begin{equation}\label{def:seminorm.spT}
    \seminorm[s,p,T]{\sv}\eqbydef s_{T}(\sv,\sv)^{\frac1{p}}
    =\left(
      \sum_{F\in\Fh[T]}\int_F h_F^{1-p}|\lproj[F]{k}(\unv[F]-\PT\sv)(\vec{x})|^p ds(\vec{x})
    \right)^{\frac1p}.
  \end{equation}
  Then,
  \begin{equation}\label{equiv:norms}
    \norm[1,p,T]{\sv}
    \approx \left( \norm[L^p(T)^d]{\GRAD\pT\sv}^p + \seminorm[s,p,T]{\sv}^p \right)^{\frac1p}
    \approx \left( \norm[L^p(T)^d]{\GT\sv}^p + \seminorm[s,p,T]{\sv}^p \right)^{\frac1p},
  \end{equation}
  where $A\approx B$ means that $M^{-1}A \le B\le M A$ for some real number $M>0$ that may depend on $\Omega$, $\varrho$, $k$ and $p$,
  but does not otherwise depend on the mesh, $T$ or $\sv$.
\end{lemma}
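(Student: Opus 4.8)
Throughout this sketch, $C$ denotes a generic constant having the dependencies allowed in the statement (it may change at each occurrence), and I write two-sided comparisons with the symbol $\approx$ as in the statement. The plan is to prove separately that $\norm[1,p,T]{\sv}$ is bounded by, and bounds, the $\GT$-based quantity, to compare the $\GT$- and $\GRAD\pT$-based quantities through the orthogonal-projection property in \eqref{eq:pT}, and to assemble these into the three-way equivalence \eqref{equiv:norms}. Three devices recur: the $L^p$-stability of $L^2$-projectors (Lemma~\ref{lem:stab.proj}), which lets me replace an arbitrary $L^{p'}$ test function by its $L^2$-projection and thereby restrict the supremum defining an $L^p$-norm of a degree-$\le k$ polynomial to \emph{polynomial} test functions in $\Poly{k}(T)^d$; the reverse Lebesgue embeddings (Lemma~\ref{lem:lebemb}), which exchange $L^2$- and $L^p$-norms of a polynomial on a fixed $T$ or $F$ at the price of a power of $|T|\approx h_T^d$ or $|F|\approx h_T^{d-1}$; and the discrete trace inequality $\norm[L^q(F)]{w}\le C h_F^{-1/q}\norm[L^q(T)]{w}$ for polynomials $w$, used together with \eqref{eq:hF} and \eqref{eq:Np}. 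The conceptual starting point is the face decomposition
\[
\unv[F]-\unv[T]=\lproj[F]{k}(\unv[F]-\PT\sv)+\lproj[F]{k}(\pT\sv-\lproj[T]{k}\pT\sv)\qquad\text{on }F\in\Fh[T],
\]
which holds because, by \eqref{eq:PT}, $\PT\sv-\unv[T]=\pT\sv-\lproj[T]{k}\pT\sv$ and $\unv[F]-\unv[T]$ is already of degree $\le k$ on $F$. Bounding $\norm[L^p(T)]{\pT\sv-\lproj[T]{k}\pT\sv}\le C h_T\norm[L^p(T)^d]{\GRAD\pT\sv}$ by Lemma~\ref{lem:Wkp.interp} (via Remark~\ref{rem:union.ss}, with $s=1$, $m=0$), then applying the trace estimate of Lemma~\ref{lem:Wkp.interp.trace} and the $L^p$-stability of $\lproj[F]{k}$, I get $\sum_{F\in\Fh[T]}h_F^{1-p}\norm[L^p(F)]{\lproj[F]{k}(\pT\sv-\lproj[T]{k}\pT\sv)}^p\le C\norm[L^p(T)^d]{\GRAD\pT\sv}^p$; hence the face part of $\norm[1,p,T]{\sv}$ is controlled by $\seminorm[s,p,T]{\sv}+\norm[L^p(T)^d]{\GRAD\pT\sv}$.

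I would then prove boundedness. Testing the defining relation \eqref{def:GT} against a \emph{polynomial} $\bphi\in\Poly{k}(T)^d$ and using the discrete trace inequality face by face gives $(\GT\sv,\bphi)_T\le C\norm[1,p,T]{\sv}\,\norm[L^{p'}(T)^d]{\bphi}$; the duality device then yields $\norm[L^p(T)^d]{\GT\sv}\le C\norm[1,p,T]{\sv}$. Since $\GRAD\pT\sv$ is the $L^2$-projection of $\GT\sv$ onto $\nabla\Poly{k+1}(T)\subseteq\Poly{k}(T)^d$, the $L^2$-contraction together with Lemma~\ref{lem:lebemb} on both sides gives $\norm[L^p(T)^d]{\GRAD\pT\sv}\le C\norm[L^p(T)^d]{\GT\sv}$. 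Combined with the first paragraph this also bounds $\seminorm[s,p,T]{\sv}\le C\norm[1,p,T]{\sv}$, so both right-hand quantities in \eqref{equiv:norms} are controlled by $\norm[1,p,T]{\sv}$.

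The crux is the reverse comparison $\norm[L^p(T)^d]{\GT\sv}\le C(\norm[L^p(T)^d]{\GRAD\pT\sv}+\seminorm[s,p,T]{\sv})$. Here I set $\vec{r}:=\GT\sv-\GRAD\pT\sv$, which is $L^2$-orthogonal to $\nabla\Poly{k+1}(T)$. Testing \eqref{def:GT} with $\bphi=\vec{r}$ annihilates the volumetric term, since $\GRAD\unv[T]\in\nabla\Poly{k+1}(T)$, leaving $\norm[L^2(T)^d]{\vec{r}}^2=\sum_{F\in\Fh[T]}(\unv[F]-\unv[T],\vec{r}\cdot\normal_{TF})_F$; inserting the face decomposition, applying Cauchy--Schwarz and the discrete trace inequality in $L^2$, and using the first-paragraph bound in its $L^2$ form, I obtain $\norm[L^2(T)^d]{\vec{r}}\le C(S_2^{1/2}+\norm[L^2(T)^d]{\GRAD\pT\sv})$, where $S_2$ is the $L^2$-analogue of $s_T(\sv,\sv)$. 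Converting each term to its $L^p$ counterpart through Lemma~\ref{lem:lebemb} is exactly where scaling must be watched: the powers of $h_T$ produced on the two sides cancel because the whole estimate is homogeneous, and the passage from an $\ell^2$- to an $\ell^p$-sum over faces is harmless because the number of faces is bounded by $\Np$ (see \eqref{eq:Np}). This yields $\norm[L^p(T)^d]{\vec{r}}\le C(\seminorm[s,p,T]{\sv}+\norm[L^p(T)^d]{\GRAD\pT\sv})$ and hence the desired bound on $\norm[L^p(T)^d]{\GT\sv}$.

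Finally I would establish coercivity. The duality device applied to \eqref{def:GT} solved for $\GRAD\unv[T]\in\Poly{k}(T)^d$ gives $\norm[L^p(T)^d]{\GRAD\unv[T]}\le C\big(\norm[L^p(T)^d]{\GT\sv}+\big(\sum_{F\in\Fh[T]}h_F^{1-p}\norm[L^p(F)]{\unv[F]-\unv[T]}^p\big)^{1/p}\big)$; combining this with the face bound of the first paragraph and the crux estimate shows $\norm[L^p(T)^d]{\GRAD\unv[T]}\le C(\norm[L^p(T)^d]{\GRAD\pT\sv}+\seminorm[s,p,T]{\sv})$. Adding the face part then gives $\norm[1,p,T]{\sv}\le C(\norm[L^p(T)^d]{\GRAD\pT\sv}+\seminorm[s,p,T]{\sv})$, which closes the chain: the two right-hand quantities in \eqref{equiv:norms} are equivalent to one another (by the crux estimate and $\norm[L^p(T)^d]{\GRAD\pT\sv}\le C\norm[L^p(T)^d]{\GT\sv}$) and both equivalent to $\norm[1,p,T]{\sv}$. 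The main obstacle is the crux estimate of the third paragraph: only there does one genuinely need Hilbertian orthogonality to recognise the stabilization term $s_T$ inside $\vec{r}$, and only there does the $L^2$-to-$L^p$ transfer demand the careful (but ultimately benign) bookkeeping of $h_T$-powers and the $\ell^2$/$\ell^p$ equivalence over the bounded face set.
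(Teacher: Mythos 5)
Your proof is correct, and its technical ingredients are the same ones the paper relies on: the face decomposition $\unv[F]-\unv[T]=\lproj[F]{k}(\unv[F]-\PT\sv)+\lproj[F]{k}(\pT\sv-\lproj[T]{k}\pT\sv)$ (valid since $\PT\sv-\unv[T]=\pT\sv-\lproj[T]{k}\pT\sv$ and $\unv[F]-\unv[T]$ is already in $\Poly[d-1]{k}(F)$), the $L^2$-orthogonality of $\GT\sv-\GRAD\pT\sv$ to $\nabla\Poly{k+1}(T)$, discrete trace inequalities on the boundedly many faces, and the reverse Lebesgue embeddings of Lemma~\ref{lem:lebemb} to move between exponents. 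The organization, however, is genuinely different. The paper first settles the case $p=2$ completely, importing the equivalence $\norm[1,2,T]{\sv}^2\approx\norm[L^2(T)^d]{\GRAD\pT\sv}^2+\seminorm[s,2,T]{\sv}^2$ from \cite[Lemma~4]{Di-Pietro.Ern.ea:14} and deriving the $\GT$-based variant from the projection property and the single test $\bphi=\GT\sv$; it then transfers the \emph{whole} three-way equivalence to general $p$ in one pass, using Lemma~\ref{lem:lebemb}, $h_F|F|\approx|T|$, and the elementary inequality \eqref{ineq:brute} to exchange $\ell^2$- and $\ell^p$-sums over faces. You instead reconstruct the cited $L^2$ lemma from scratch and work directly in $L^p$ wherever possible, via the duality device that restricts the supremum defining $\norm[L^p(T)^d]{{\cdot}}$ to polynomial test functions through the $L^{p'}$-stability of $\lproj[T]{k}$ (Lemma~\ref{lem:stab.proj}), retreating to $L^2$ only for the one step where Hilbertian orthogonality is indispensable. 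What your route buys is self-containedness and a sharp localization of where the Hilbert structure is actually needed; what the paper's route buys is brevity, since the exponent transfer is performed once at the level of the final statement rather than estimate by estimate. The scaling bookkeeping you flag in the crux step does close: the mismatch between $h_F^{-1/2}\norm[L^2(F)]{{\cdot}}$ and $h_F^{\frac{1-p}{p}}\norm[L^p(F)]{{\cdot}}$ is exactly absorbed by $(h_F|F|/|T|)^{\frac12-\frac1p}\approx 1$, and the $\ell^2$/$\ell^p$ exchange over faces is harmless by \eqref{eq:Np}.
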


\begin{remark}[Choice of the face seminorm]
  The proof of the norm equivalence does not make use of the specific structure of $s_T$, and could have been proved replacing $\seminorm[s,p,T]{{\cdot}}$ by any other local face seminorm composed by terms scaling on each face $F\in\Fh[T]$ as $h_F^{1-p}\norm[L^p(F)]{{\cdot}}$.
\end{remark}

\begin{proof}
  We abridge $A\lesssim B$ the inequality $A\le MB$ with real $M$ 
  only depending on $\Omega$, $\varrho$, $k$ and $p$.

  \medskip
  
  \emph{Step 1: $p=2$.}
  It was proved in \cite[Lemma~4]{Di-Pietro.Ern.ea:14} that 
  \begin{equation}\label{equiv:norm.2}
    \norm[1,2,T]{\sv}^2\approx\norm[L^2(T)^d]{\GRAD\pT\sv}^2 + \seminorm[s,2,T]{\sv}^2,
  \end{equation}
  which is exactly the first relation in \eqref{equiv:norms} for $p=2$.
  To prove the second, we notice that since, for all $\sv\in\UT$, $\GRAD\pT\sv$ is an orthogonal projection of
  $\GT\sv$ in $L^2(T)^d$, we have $\norm[L^2(T)^d]{\GRAD\pT\sv}\le \norm[L^2(T)^d]{\GT\sv}$. Relation \eqref{equiv:norm.2} therefore shows that
  $$
  \norm[1,2,T]{\sv}^{2}\lesssim \norm[L^2(T)^d]{\GT\sv}^2 + \seminorm[s,2,T]{\sv}^2.
  $$
  To prove the converse estimate,
  we make $\bphi=\GT\sv$ into the definition \eqref{def:GT} of $\GT\sv$, and use
  the Cauchy--Schwarz inequality together with the discrete trace inequality~\cite[Lemma~1.46]{Di-Pietro.Ern:12}  to infer
  $$
  \begin{aligned}
    \norm[L^2(T)^d]{\GT\sv}^2
    &\lesssim
    \norm[L^2(T)^d]{\GRAD\unv[T]}  \norm[L^2(T)^d]{\GT\sv}
    + \!\!\sum_{F\in\Fh[T]} h_F^{-\frac12}\norm[L^2(F)]{\unv[F]-\unv[T]} \norm[L^2(T)^d]{\GT\sv}
    \\
    &\lesssim \norm[1,2,T]{\sv}\norm[L^2(T)^d]{\GT\sv}.
  \end{aligned}
  $$
  This estimate shows that
  $
  \norm[L^2(T)^d]{\GT\sv}\lesssim\norm[1,2,T]{\sv}
  $
  and, combined with~\eqref{equiv:norm.2} to estimate $\seminorm[s,2,T]{\sv}\lesssim\norm[1,2,T]{\sv}$, completes the proof of the case $p=2$.
  
  \medskip

  \emph{Step 2: $p\in [1,+\infty)$.}
    Relation \eqref{equiv:norms} for a generic $p$ can be deduced from the case $p=2$
    thanks to Lemma \ref{lem:lebemb} ($T$ and $F$ clearly satisfy the geometric assumptions therein, cf. Remark~\ref{rem:geom.reg}).
    We only show how to do this to establish
    $$
    \norm[1,p,T]{\sv}^p\lesssim
    \norm[L^p(T)^d]{\GT\sv}^p + \seminorm[s,p,T]{\sv}^p,
    $$
    all the other estimates being obtained in a similar way.
    By admissibility of $(\Th)_{h\in\mathcal{H}}$, we have $h_F|F|\approx |T|$ for any $F\in\Fh[T]$.
    Thus, for $\sv\in\UT$, by Lemma \ref{lem:lebemb},
    \begin{align*}
      \norm[1,p,T]{\sv}^p&\lesssim
      |T|^{1-\frac{p}{2}}\norm[L^2(T)^d]{\GRAD\unv[T]}^p
      +\sum_{F\in\Fh[T]}h_F^{1-p}|F|^{1-\frac{p}{2}}\norm[L^2(F)]{\unv[F]-\unv[T]}^p\\
      &\lesssim |T|^{1-\frac{p}{2}}
      \left(
      \norm[L^2(T)^d]{\GRAD\unv[T]}^2
      + \sum_{F\in\Fh[T]}h_F^{-1}\norm[L^2(F)]{\unv[F]-\unv[T]}^2
      \right)^{\frac{p}{2}},
    \end{align*}
    where, to pass to the second line, we used the inequality 
    \begin{equation}\label{ineq:brute}
      \forall \theta>0,\,\forall a_i\ge 0\,:\,
      \sum_{i=0}^N a_i \le N\left(\sum_{i=1}^N a_i^{\theta}\right)^{\frac{1}{\theta}}
    \end{equation}
    which follows from writing $a_j=(a_j^{\theta})^{\frac{1}{\theta}}\le (\sum_{i=1}^N a_i^{\theta})^{\frac{1}{\theta}}$ for all $j$.
    Apply \eqref{equiv:norms} with $p=2$ and use again
    Lemma \ref{lem:lebemb} and the inequality \eqref{ineq:brute} to infer
    \begin{align*}
      \norm[1,p,T]{\sv}^p&\lesssim |T|^{1-\frac{p}{2}}
      \left(
      \norm[L^2(T)^d]{\GT\sv}^2 +\sum_{F\in\Fh[T]}h_F^{-1}\norm[L^2(F)]{\lproj[F]{k}(\unv[F]-\PT\sv)}^2
      \right)^{\frac{p}{2}}\\
      &\lesssim |T|^{1-\frac{p}{2}}
      \left(
      |T|^{1-\frac{2}{p}}\norm[L^p(T)^d]{\GT\sv}^2 +\sum_{F\in\Fh[T]}h_F^{-1}
      |F|^{1-\frac{2}{p}}\norm[L^p(F)]{\lproj[F]{k}(\unv[F]-\PT\sv)}^2
      \right)^{\frac{p}{2}}\\
      &\lesssim |T|^{1-\frac{p}{2}}
      \left(
      |T|^{1-\frac{2}{p}}
      \bigg(
        \norm[L^p(T)^d]{\GT\sv}^2 +\sum_{F\in\Fh[T]}h_F^{\frac{2}{p}-2}
        \norm[L^p(F)]{\lproj[F]{k}(\unv[F]-\PT\sv)}^2
        \bigg)
      \right)^{\frac{p}{2}}\\
      &\lesssim \norm[L^p(T)^d]{\GT\sv}^p +\sum_{F\in\Fh[T]}h_F^{1-p}
      \norm[L^p(F)]{\lproj[F]{k}(\unv[F]-\PT\sv)}^p.\qedhere
    \end{align*}
\end{proof}

\subsection{Discrete Sobolev embeddings}

The first ingredient of our convergence analysis is the following discrete counterpart of Sobolev embeddings, which will be used in Proposition~\ref{prop:apriori.est} to obtain an a priori estimate of the discrete solution.

\begin{proposition}[Discrete Sobolev embeddings]\label{prop:discsob}
  Let $(\Th)_{h\in\mathcal{H}}$ be an admissible mesh sequence.
  Let $1\le q\le p^*$ if $1\le p<d$ (with $p^*$ defined by~\eqref{eq:p*}) and $1\le q<+\infty$ if $p\ge d$. 
  Then, there exists $C$ only depending on $\Omega$, $\varrho$, $k$, $q$ and $p$ such that
  \begin{equation}\label{eq:discsob}
    \forall\sv[h]\in \UhD\,:\,
    \norm[L^{q}(\Omega)]{\unv[h]}\le C \norm[1,p,h]{\sv[h]}.
  \end{equation}
\end{proposition}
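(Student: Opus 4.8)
The plan is to deduce the hybrid embedding from the discrete Sobolev embedding for \emph{broken} polynomial spaces established in~\cite{Di-Pietro.Ern:10} (see also~\cite[Chapter~5]{Di-Pietro.Ern:12}), by showing that a discontinuous-Galerkin-type broken seminorm of $\unv[h]$ is controlled by the hybrid norm $\norm[1,p,h]{\sv[h]}$. To this end I would introduce the broken seminorm
\begin{equation*}
  \seminorm[\mathrm{dG},p,h]{\unv[h]}\eqbydef\left(\sum_{T\in\Th}\norm[L^p(T)^d]{\GRAD\unv[T]}^p+\sum_{F\in\Fh}h_F^{1-p}\norm[L^p(F)]{\jump{\unv[h]}}^p\right)^{\frac1p},
\end{equation*}
where $\jump{\unv[h]}=\restrto{\unv[T_1]}{F}-\restrto{\unv[T_2]}{F}$ on an interface $F\in\Fhi$ shared by $T_1,T_2\in\Th$, and $\jump{\unv[h]}=\restrto{\unv[T]}{F}$ on a boundary face $F\in\Fhb$ with $F\subset\partial T$. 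The whole point is that the face unknowns let us reconstruct the genuine interelement jumps of the broken function $\unv[h]$.

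\emph{First}, I would prove $\seminorm[\mathrm{dG},p,h]{\unv[h]}\lesssim\norm[1,p,h]{\sv[h]}$ by inserting the face unknowns. The element contributions coincide verbatim. For an interface $F\in\Fhi$ shared by $T_1,T_2$, the triangle inequality gives $\norm[L^p(F)]{\restrto{\unv[T_1]}{F}-\restrto{\unv[T_2]}{F}}\le\norm[L^p(F)]{\unv[F]-\unv[T_1]}+\norm[L^p(F)]{\unv[F]-\unv[T_2]}$, whereas for a boundary face $F\in\Fhb$ the condition $\unv[F]=0$ built into $\UhD$ yields $\restrto{\unv[T]}{F}=-(\unv[F]-\restrto{\unv[T]}{F})$, hence $\norm[L^p(F)]{\unv[T]}=\norm[L^p(F)]{\unv[F]-\unv[T]}$. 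Raising to the power $p$, using $(a+b)^p\le 2^{p-1}(a^p+b^p)$ and $\card{\Th[F]}\le 2$, and regrouping the face contributions element by element, each term is absorbed into the local penalty of $\norm[1,p,T]{\sv}$ (cf.~\eqref{def:norm.1Tp}), so that $\seminorm[\mathrm{dG},p,h]{\unv[h]}^p\le 2^{p-1}\norm[1,p,h]{\sv[h]}^p$. The homogeneous boundary condition is thus precisely what turns the boundary traces into penalised boundary jumps, i.e.\ it makes $\unv[h]$ behave as if extended by zero outside $\Omega$.

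\emph{Next}, I would invoke the discrete Sobolev embedding on broken polynomial spaces, which bounds $\norm[L^{q}(\Omega)]{\unv[h]}$ by $\seminorm[\mathrm{dG},p,h]{\unv[h]}$ for the stated range of $(p,q)$, the boundary penalty encoding the homogeneous Dirichlet condition. Chaining this with the reduction above yields~\eqref{eq:discsob}. When only the critical embedding ($q=p^*$, $p<d$) is available from the reference, the remaining cases follow by elementary means: for $p<d$ and $q<p^*$, Hölder's inequality on the bounded domain gives $\norm[L^q(\Omega)]{\unv[h]}\le|\Omega|^{\frac1q-\frac1{p^*}}\norm[L^{p^*}(\Omega)]{\unv[h]}$; for $p\ge d$ and arbitrary finite $q$, one picks $\tilde p<d$ with $\tilde p^*\ge q$, applies the case just treated, and compares $\norm[1,\tilde p,h]{\sv[h]}\lesssim\norm[1,p,h]{\sv[h]}$ uniformly in $h$ using Hölder on the element gradient terms and the reverse Lebesgue embeddings of Lemma~\ref{lem:lebemb} (together with $h_F|F|\approx|T|$) on the face terms. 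The reduction is elementary and the genuinely analytic content is delegated to the broken-space result, so the main obstacle is bookkeeping rather than conceptual: one must ensure that the constant in the reduction, and in the comparison $\norm[1,\tilde p,h]{\sv[h]}\lesssim\norm[1,p,h]{\sv[h]}$ for $p\ge d$, depends only on $\Omega$, $\varrho$, $k$, $p$ and $q$, which requires tracking the $h_F$-weights and the volume factors $|F|\approx h_F^{d-1}$, $|T|\approx h_F^{d}$ through the reverse inequalities so as to preserve mesh-independence.
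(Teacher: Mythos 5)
Your proposal is correct and follows essentially the same route as the paper: reduce to the discrete Sobolev embedding for broken polynomial spaces (\cite[Theorem~5.3]{Di-Pietro.Ern:12}) by bounding the dG jump seminorm of $\unv[h]$ via the triangle inequality through the face unknowns, with the homogeneous condition $\unv[F]=0$ handling boundary faces. The only difference is your contingency discussion for deducing non-critical exponents from the critical one, which is sound but unnecessary here since the cited broken-space result already covers the full stated range of $(p,q)$.
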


\begin{remark}[Discrete Poincar\'e]
  For $q=p$ (this choice is always possible since $p\le p^{*}$ for any space dimension $d$) this proposition states a discrete Poincar\'e's inequality.
\end{remark}

\begin{proof}
Here, $A\lesssim B$ means that $A\le MB$ for some $M$ only depending on
$\Omega$, $\varrho$, $k$, $q$ and $p$.
We recall the discrete Sobolev embeddings in $\Poly{k}(\Th)$ from \cite[Theorem 5.3]{Di-Pietro.Ern:12} (cf. also~\cite{Buffa.Ortner:09,Di-Pietro.Ern:10}):
\begin{equation}\label{est:sobodG}
  \forall w\in \Poly{k}(\Th)\,:\,
  \norm[L^{q}(\Omega)]{w}\lesssim
\norm[{\rm dG},p]{w},
\end{equation}
where the discrete $W^{1,p}$-norm on $\Poly{k}(\Th)$ is defined by
\begin{equation}\label{def:norm.dG}
  \norm[{\rm dG},p]{w}
  \eqbydef\left(
  \sum_{T\in\Th}\norm[L^p(T)^d]{\GRAD w_T}^p +
  \sum_{F\in\Fh}h_F^{1-p}\norm[L^p(F)]{\jump{w}}^p
  \right)^{\frac1p}.
\end{equation}
Here, for all $T\in\Th$, $w_T\eqbydef\restrto{w}{T}$, while $\jump{w}\eqbydef w_{T_1}-w_{T_2}$ is the jump of $w$ through a face $F\in \Fhi$ such that $\Th[F]=\{T_1,T_2\}$ (the sign is irrelevant).
If $F\in \Fhb$, then $\Th[F]=\{T\}$ and we let $\jump{w}=w_{T}$.
For $\sv[h]\in \UhD$ and $F$ a face between $T_1$ and $T_2$, we have, using the triangle inequality,
\[
\norm[L^p(F)]{\jump{\unv[h]}}\le \norm[L^p(F)]{\unv[T_1]-\unv[F]} + 
\norm[L^p(F)]{\unv[T_2] - \unv[F]}.
\]
Due to the strong boundary conditions, this estimate is also true if $F$ is a boundary
face and the term $T_2$ is removed. Hence, gathering by elements,
$$
\sum_{F\in\Fh}h_F^{1-p}\norm[L^p(F)]{\jump{\unv[h]}}^p
\lesssim \sum_{F\in\Fh}h_F^{1-p}\sum_{T\in\Th[F]}\norm[L^p(F)]{\unv[T]-\unv[F]}^p
=\sum_{T\in\Th}\sum_{F\in\Fh[T]}h_F^{1-p}\norm[L^p(F)]{\unv[T]-\unv[F]}^p
\le \norm[1,p,h]{\sv[h]}^p.
$$
This shows that 
\begin{equation}\label{est:norm.dG}
\norm[{\rm dG},p]{\unv[h]}\lesssim
\norm[1,p,h]{\sv[h]},
\end{equation}
which, plugged into \eqref{est:sobodG}, concludes the proof. \end{proof}

\subsection{Compactness}

The second ingredient for our convergence analysis is the following compactness result for sequences bounded in the $\norm[1,p,h]{{\cdot}}$-norm.

\begin{proposition}[Discrete compactness]\label{prop:comp} Let $(\Th)_{h\in\mathcal{H}}$ be an admissible mesh sequence, and let, \textcolor{red}{for all $h\in\mathcal H$}, $\sv[h]\in \UhD$ be such that $(\norm[1,p,h]{\sv[h]})_{h\in\mathcal{H}}$
is bounded.
Then, there exists $v\in W^{1,p}_0(\Omega)$ such that, up to a subsequence as $h\to 0$, recalling the definition~\eqref{eq:p*} of the Sobolev index $p^*$,
\begin{itemize}
\item $\unv[h]\to v$ and $\ph\sv[h]\to v$ strongly in $L^q(\Omega)$ for all $q<p^*$,
\item $\Gh\sv[h]\to \GRAD v$ weakly in $L^p(\Omega)^d$.
\end{itemize}
\end{proposition}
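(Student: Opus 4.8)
The plan is to extract limits by combining weak compactness of the reconstructed gradients with strong $L^q$-compactness of the broken functions, and then to identify the limits through the definition of the discrete gradient; throughout, $A\lesssim B$ abbreviates $A\le CB$ with $C$ independent of $h$. Summing the equivalence of Lemma~\ref{lem:equivnorms} over $T\in\Th$, the assumption that $(\norm[1,p,h]{\sv[h]})_{h}$ is bounded shows that $(\Gh\sv[h])_{h}$ is bounded in $L^p(\Omega)^d$; since $1<p<+\infty$ this space is reflexive, so up to a subsequence $\Gh\sv[h]\rightharpoonup\bchi$ weakly in $L^p(\Omega)^d$. Moreover, the bound $\norm[{\rm dG},p]{\unv[h]}\lesssim\norm[1,p,h]{\sv[h]}$ from the proof of Proposition~\ref{prop:discsob} (see~\eqref{est:norm.dG}) shows that $(\unv[h])_{h}$ is bounded in the broken $W^{1,p}$-norm on $\Poly{k}(\Th)$, while Proposition~\ref{prop:discsob} itself furnishes a uniform bound in $L^{p^*}(\Omega)$.

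Being bounded in $\norm[{\rm dG},p]{{\cdot}}$, the sequence $(\unv[h])_h$ falls within the scope of the discrete Rellich--Kondrachov theorem for broken polynomial spaces (cf.\ \cite{Di-Pietro.Ern:12}, itself resting on a Kolmogorov translation estimate controlled by the jumps across faces). Extracting a further subsequence, $\unv[h]\to v$ strongly in $L^q(\Omega)$ for every $q<p^*$, for some $v\in L^{p^*}(\Omega)$. The potential reconstruction shares this limit: since $\pT\sv$ and $\unv[T]$ have the same mean on $T$ by~\eqref{eq:pT} and $\norm[L^p(T)^d]{\GRAD\pT\sv}\lesssim\norm[1,p,T]{\sv}$ by Lemma~\ref{lem:equivnorms}, a local Poincar\'e--Wirtinger inequality (uniform over admissible meshes) gives $\norm[L^p(T)]{\pT\sv-\unv[T]}\lesssim h_T\norm[1,p,T]{\sv}$, whence $\norm[L^p(\Omega)]{\ph\sv[h]-\unv[h]}\lesssim h\,\norm[1,p,h]{\sv[h]}\to 0$; interpolating with the uniform $L^{p^*}$-bound, $\ph\sv[h]\to v$ in every $L^q$, $q<p^*$, as well.

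It remains to identify $\bchi=\GRAD v$. Fix $\bphi\in C_c^\infty(\Real^d)^d$. As $\Gh\sv[h]$ is piecewise in $\Poly{k}(T)^d$, I test~\eqref{def:GT.bis} against $\lproj[T]{k}\bphi$; after one integration by parts, using that $\GRAD\unv[T]\in\Poly{k-1}(T)^d\subseteq\Poly{k}(T)^d$ is $L^2(T)$-orthogonal to $\lproj[T]{k}\bphi-\bphi$, one rewrites the local contribution as $(\GT\sv,\bphi)_T=-(\unv[T],\DIV\bphi)_T+\sum_{F\in\Fh[T]}(\unv[F],\bphi\SCAL\normal_{TF})_F+\sum_{F\in\Fh[T]}(\unv[F]-\unv[T],(\lproj[T]{k}\bphi-\bphi)\SCAL\normal_{TF})_F$. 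Summing over $\Th$, the single-valued face unknowns make the term $\sum_{T}\sum_{F}(\unv[F],\bphi\SCAL\normal_{TF})_F$ cancel across interfaces (opposite normals against a continuous $\bphi$) and vanish on $\Fhb$ (where $\unv[F]=0$), leaving
\[
\int_\Omega\Gh\sv[h]\SCAL\bphi\dx=-\int_\Omega\unv[h]\,\DIV\bphi\dx+\sum_{T\in\Th}\sum_{F\in\Fh[T]}(\unv[F]-\unv[T],(\lproj[T]{k}\bphi-\bphi)\SCAL\normal_{TF})_F.
\]
A H\"older inequality over the faces, the face part of $\norm[1,p,T]{{\cdot}}$, and the trace approximation estimate~\eqref{eq:approx.lproj.Wsp.trace} applied to $\lproj[T]{k}\bphi$ (with exponent $p'$, $s=1$, valid for every $k\ge0$) bound the remainder by $\lesssim h\,\norm[1,p,h]{\sv[h]}\norm[W^{1,p'}(\Omega)]{\bphi}=O(h)$. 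Passing to the limit with $\Gh\sv[h]\rightharpoonup\bchi$ and $\unv[h]\to v$ yields $\int_\Omega\bchi\SCAL\bphi\dx=-\int_\Omega v\,\DIV\bphi\dx$ for every $\bphi\in C_c^\infty(\Real^d)^d$; allowing test fields that do not vanish on $\partial\Omega$ shows that the zero-extension of $v$ lies in $W^{1,p}(\Real^d)$, i.e.\ $v\in W^{1,p}_0(\Omega)$ with $\GRAD v=\bchi$, which is the assertion.

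The heaviest machinery sits in the strong $L^q$-compactness of $(\unv[h])_h$, which I outsource to the broken-space Rellich theorem; this reduction is legitimate precisely because of the jump bound~\eqref{est:norm.dG}. The genuinely method-specific and most delicate step is the gradient identification: a crude estimate of the boundary terms is only $O(1)$, and it is essential to start from the divergence form~\eqref{def:GT.bis} and exploit the single-valuedness of the face unknowns so that the leading face contribution cancels exactly, leaving a remainder that the projection approximation estimates render $O(h)$.
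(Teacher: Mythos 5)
Your proof is correct and follows essentially the same route as the paper's: the discrete Rellich--Kondrachov theorem applied to $\unv[h]$ via the jump bound \eqref{est:norm.dG}, a Poincar\'e--Wirtinger comparison to transfer the strong limit to $\ph\sv[h]$ (which the paper packages as Corollary~\ref{cor:comp-u-pu}), weak $L^p$-compactness of $\Gh\sv[h]$ from the norm equivalence \eqref{equiv:norms}, and identification of the weak limit by testing against smooth vector fields, cancelling the face contributions by single-valuedness of the face unknowns and the boundary condition, and controlling the remainder with the trace approximation estimate \eqref{eq:approx.lproj.Wsp.trace}. Your observation that the volumetric commutation term vanishes exactly by $L^2$-orthogonality is a mild streamlining (the paper estimates the corresponding term by $O(h)$ even though it is identically zero), and the only loose end is that your interpolation argument for $\ph\sv[h]\to v$ in $L^q$, $q<p^*$, tacitly requires a uniform high-exponent bound on $\ph\sv[h]-\unv[h]$ (and some care when $p\ge d$, where $p^*=+\infty$), which the paper's Corollary~\ref{cor:comp-u-pu}, applied directly at exponent $q$, supplies without interpolation.
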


\begin{remark} If $p^*<+\infty$, the discrete Sobolev embeddings~\eqref{est:sobodG} and Corollary \ref{cor:comp-u-pu} show that both $\unv[h]$ and $\ph\sv[h]$ are bounded in $L^{p^*}(\Omega)$, and their convergence stated in Proposition \ref{prop:comp} extends to $L^{p^*}(\Omega)$-weak.
\end{remark}

The proof of Proposition~\ref{prop:comp} requires an auxiliary result allowing us to compare, for all $\sv[h]\in\Uh$, the broken polynomial function~\eqref{def:unvh} on $\Th$ defined by element DOFs and the potential reconstruction~\eqref{def:Gh.ph}.
Instrumental to obtaining this comparison result is the following Poincar\'e--Wirtinger--Sobolev inequality on broken polynomial spaces, whose interest goes beyond the specific application considered here.

\begin{lemma}[Poincar\'e--Wirtinger--Sobolev inequality for broken polynomial functions with local zero average]\label{lem:poi-wir}
  Let $(\Th)_{h\in\mathcal{H}}$ be an admissible mesh sequence, and let $p\le q\le p^*$ with $p^*$ defined by~\eqref{eq:p*}.
  If $w\in \Poly{k}(\Th)$ satisfies $\int_T w(\vec{x})d\vec{x}=0$ for all $T\in\Th$, then
	there exists $C$ only depending on $\Omega$, $\varrho$, $k$, $q$ and $p$ such that (with $\GRADh$ denoting the usual broken gradient),
  \begin{equation}\label{eq:poi-wir}
    \norm[L^q(\Omega)]{w}
    \le C h^{1+\frac{d}{q}-\frac{d}{p}} \norm[L^p(\Omega)^d]{\GRADh w}.
  \end{equation}
\end{lemma}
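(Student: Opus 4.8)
The plan is to localize the inequality to each mesh element, where the local zero-average condition makes a Poincar\'e--Wirtinger inequality available, and then to reassemble the local bounds into the global estimate. On a fixed $T\in\Th$, the first step is to pass from the $L^q$-norm of $w$ to its $L^p$-norm using the reverse Lebesgue embedding of Lemma~\ref{lem:lebemb} (with $m=p$, which applies since $\restrto{w}{T}\in\Poly{k}(T)$ and $T$ satisfies~\eqref{eq:reg.U} with $\delta=\varrho^2$ by Remark~\ref{rem:geom.reg}), giving $\norm[L^q(T)]{w}\lesssim |T|^{\frac1q-\frac1p}\norm[L^p(T)]{w}$. The second step is a local Poincar\'e--Wirtinger inequality: since $\int_T w=0$, the $L^2$-projection $\lproj[T]{0}w$ onto constants vanishes, so $w=w-\lproj[T]{0}w$ and Lemma~\ref{lem:Wkp.interp} applied with $k=0$, $s=1$ and $m=0$ (valid on elements of admissible mesh sequences by Remark~\ref{rem:union.ss}) yields $\norm[L^p(T)]{w}\lesssim h_T\norm[L^p(T)^d]{\GRAD w}$.

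Combining the two local bounds and using $|T|\approx h_T^d$ (a consequence of mesh admissibility), I obtain $\norm[L^q(T)]{w}\lesssim h_T^{1+\frac dq-\frac dp}\norm[L^p(T)^d]{\GRAD w}$. The key arithmetic check is that the exponent $1+\frac dq-\frac dp$ is nonnegative for all admissible $q$: at $q=p$ it equals $1$, and at $q=p^*$ (when $p<d$) the identity $\frac{d}{p^*}=\frac dp-1$ makes it vanish; for $p\ge d$ it is bounded below by $1-\frac dp\ge 0$. Since the exponent is nonnegative and $h_T\le h$, I may replace $h_T$ by $h$, arriving at $\norm[L^q(T)]{w}\lesssim h^{1+\frac dq-\frac dp}\norm[L^p(T)^d]{\GRAD w}$.

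It remains to sum over $T\in\Th$. Raising the local bound to the power $q$, summing, and then invoking the embedding $\ell^p\hookrightarrow\ell^q$ (valid since $q\ge p$, giving $\sum_T a_T^q\le(\sum_T a_T^p)^{q/p}$) converts $\sum_{T\in\Th}\norm[L^p(T)^d]{\GRAD w}^q$ into $\norm[L^p(\Omega)^d]{\GRADh w}^q$; taking the $q$-th root then produces~\eqref{eq:poi-wir}. I expect the only delicate point to be the sign of the scaling exponent, since its nonnegativity is exactly what licenses the uniform replacement of $h_T$ by $h$ and is what confines the result to the Sobolev range $p\le q\le p^*$; the reverse-Lebesgue and Poincar\'e--Wirtinger inputs and the final $\ell^p$--$\ell^q$ step are otherwise routine.
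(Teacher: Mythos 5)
Your proof is correct and follows essentially the same route as the paper's: a local Poincar\'e--Wirtinger step via $\lproj[T]{0}w=0$ and Lemma~\ref{lem:Wkp.interp}, the reverse Lebesgue embedding of Lemma~\ref{lem:lebemb} combined with $|T|\approx h_T^d$ to trade exponents (you apply it to $w$ before the Poincar\'e step, the paper applies it to $\GRAD w$ after --- an immaterial reordering), and the $\ell^p\hookrightarrow\ell^q$ summation. The only omission is the case $q=+\infty$ (which is admissible when $p\ge d$, since then $p^*=+\infty$), where ``raising to the power $q$ and summing'' must be replaced by applying the local bound on an element $T$ realizing $\norm[L^\infty(\Omega)]{w}$, as the paper does.
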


\begin{remark} If $p\le d$, the exponent $1+\frac{d}{q}-\frac{d}{p}$ in $h$ is positive
if $q<p^*$ and equal to $0$ if $q=p^*$.
\end{remark}

\begin{proof}
In this proof, $A\lesssim B$ means that $A\le MB$ for some $M$ only depending
on $\Omega$, $\varrho$, $k$, $q$ and $p$.
We have, for all $T\in\Th$, $\lproj[T]{0}w=0$ and therefore, by \eqref{eq:approx.lproj.Wsp} with 
$k=0$, $s=1$ and $m=0$, using Lemma \ref{lem:lebemb} with $m=p$, and recalling that $|T|\lesssim h_T^d$, we write
\begin{equation}\label{PWS.local.1}
\norm[L^q(T)]{w}=\norm[L^q(T)]{w-\lproj[T]{0}w}
\lesssim h_T\norm[L^q(T)^d]{\GRAD w}
\lesssim h_T |T|^{\frac{1}{q}-\frac{1}{p}}
\norm[L^p(T)^d]{\GRAD w}\lesssim h_T^{1+\frac{d}{q}-\frac{d}{p}}
\norm[L^p(T)^d]{\GRAD w}.
\end{equation}
If $q$ is finite, we take the
the power $q$ of this inequality, sum over $T\in\Th$, and use $\norm[L^p(T)^d]{\GRAD w}^{q-p}\le \norm[L^p(\Omega)^d]{\GRADh w}^{q-p}$ (we have $q\ge p$) to infer
\begin{align*}
  \norm[L^q(\Omega)]{w}^q&
  \lesssim  h^{q+d-\frac{dq}{p}}\sum_{T\in\Th}\norm[L^p(T)^d]{\GRAD w}^q
  \le h^{q+d-\frac{dq}{p}}\norm[L^p(\Omega)^d]{\GRADh w}^{q-p}
  \sum_{T\in\Th}\norm[L^p(T)^d]{\GRAD w}^p\\  
  &= h^{q+d-\frac{dq}{p}}\norm[L^p(\Omega)^d]{\GRADh w}^{q-p}
  \norm[L^p(\Omega)^d]{\GRADh w}^p
  = h^{q+d-\frac{dq}{p}}\norm[L^p(\Omega)^d]{\GRADh w}^q.
\end{align*}
Taking the power $1/q$ of this inequality concludes the proof.
If $q=+\infty$, we apply \eqref{PWS.local.1} to $T\in \Th$ such that $\norm[L^\infty(T)]{w}=
\norm[L^\infty(\Omega)]{w}$ to obtain $\norm[L^\infty(\Omega)]{w}
\lesssim h^{1-\frac{d}{p}}\norm[L^p(T)^d]{\GRAD w}
\le h^{1-\frac{d}{p}}\norm[L^p(\Omega)^d]{\GRAD w}$.
\end{proof}

\begin{corollary}[Comparison between $\mathsf{v}_h$ and ${\ph\sv[h]}$]\label{cor:comp-u-pu}
Let $(\Th)_{h\in\mathcal{H}}$ be an admissible mesh sequence, and let $p\le q\le p^*$.
Then, there exists $C$ only depending on $\Omega$, $\varrho$, $k$, $q$ and $p$ such that
\begin{equation}\label{eq:comp-u-pu}
  \forall \sv[h]\in\Uh\,:\,
  \norm[L^{q}(\Omega)]{\unv[h]-\ph\sv[h]}\le C h^{1+\frac{d}{q}-\frac{d}{p}}
  \norm[1,p,h]{\sv[h]}.
\end{equation}
\end{corollary}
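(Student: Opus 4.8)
The plan is to apply the Poincar\'e--Wirtinger--Sobolev inequality of Lemma~\ref{lem:poi-wir} to the broken polynomial function $w\eqbydef\unv[h]-\ph\sv[h]$, after checking that $w$ has zero average on each mesh element. First I would observe that, since $\ph\sv[h]$ restricts to $\pT\sv[T]$ on each $T\in\Th$ (cf.~\eqref{def:Gh.ph}), the second defining relation in~\eqref{eq:pT} yields $\int_T(\unv[T]-\pT\sv[T])\dx=0$, that is $\int_T w\dx=0$ for all $T\in\Th$. Although $w$ belongs to $\Poly{k+1}(\Th)$ rather than $\Poly{k}(\Th)$, Lemma~\ref{lem:poi-wir} applies verbatim with the polynomial degree $k$ replaced by $k+1$ (the constant still depends only on $\Omega$, $\varrho$, $k$, $q$ and $p$, since the degree enters only through Lemma~\ref{lem:lebemb} and~\eqref{eq:approx.lproj.Wsp}), giving
\[
\norm[L^q(\Omega)]{\unv[h]-\ph\sv[h]}\lesssim h^{1+\frac{d}{q}-\frac{d}{p}}\,\norm[L^p(\Omega)^d]{\GRADh(\unv[h]-\ph\sv[h])}.
\]

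It then remains to bound the broken gradient on the right-hand side by $\norm[1,p,h]{\sv[h]}$. Element by element, $\restrto{\GRADh(\unv[h]-\ph\sv[h])}{T}=\GRAD\unv[T]-\GRAD\pT\sv[T]$, so by the triangle inequality
\[
\norm[L^p(T)^d]{\GRAD\unv[T]-\GRAD\pT\sv[T]}\le \norm[L^p(T)^d]{\GRAD\unv[T]}+\norm[L^p(T)^d]{\GRAD\pT\sv[T]}.
\]
The first term is bounded by $\norm[1,p,T]{\sv}$ directly from the definition~\eqref{def:norm.1Tp}, while the second is controlled by $\norm[1,p,T]{\sv}$ through the norm equivalence of Lemma~\ref{lem:equivnorms} (which in particular yields $\norm[L^p(T)^d]{\GRAD\pT\sv}\lesssim\norm[1,p,T]{\sv}$). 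Hence $\norm[L^p(T)^d]{\restrto{\GRADh(\unv[h]-\ph\sv[h])}{T}}\lesssim\norm[1,p,T]{\sv}$; raising to the power $p$, summing over $T\in\Th$, taking the $p$-th root, and recalling the definition~\eqref{def:norm.1} of $\norm[1,p,h]{{\cdot}}$ gives $\norm[L^p(\Omega)^d]{\GRADh(\unv[h]-\ph\sv[h])}\lesssim\norm[1,p,h]{\sv[h]}$. Combining this with the displayed Poincar\'e--Wirtinger--Sobolev estimate concludes the proof.

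The argument is essentially routine, being a direct consequence of the two preceding results. The only points requiring a moment of care are the verification of the local zero-average property of $w$ (which is exactly what makes Lemma~\ref{lem:poi-wir} applicable) and the harmless observation that $w$ has degree $k+1$ rather than $k$; neither constitutes a genuine obstacle, so I do not anticipate any serious difficulty in carrying out the plan.
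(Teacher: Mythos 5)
Your proposal is correct and follows essentially the same route as the paper: verify the local zero average from the second relation in \eqref{eq:pT}, apply Lemma~\ref{lem:poi-wir} to $\unv[h]-\ph\sv[h]$, and bound the broken gradient via the triangle inequality together with the norm equivalence \eqref{equiv:norms}. Your explicit remark that the function lies in $\Poly{k+1}(\Th)$ rather than $\Poly{k}(\Th)$ (harmless, since the degree only enters the constants) is a detail the paper passes over silently, and is a welcome touch of care.
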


\begin{proof}
Here, $A\lesssim B$ means $A\le MB$ for $M$ only depending
on $\Omega$, $\varrho$, $k$, $q$ and $p$.
By the second equation in \eqref{eq:pT}, the average of $\unv[h]-\ph\sv[h]$ over each
element of $\Th$ is zero. Hence,~\eqref{eq:poi-wir} gives
\begin{equation}\label{comp:1}
  \norm[L^{q}(\Omega)]{\unv[h]-\ph\sv[h]}\lesssim h^{1+\frac{d}{q}-\frac{d}{p}}
  \norm[L^p(\Omega)^d]{\GRADh(\unv[h]-\ph\sv[h])}.
\end{equation}
Recalling the definitions~\eqref{def:unvh} of $\unv[h]$ and~\eqref{def:norm.1} of the $\norm[1,p,h]{{\cdot}}$-norm, we have
\begin{equation}\label{comp:2}
  \norm[L^p(\Omega)^d]{\GRADh\unv[h]}^p
  = \sum_{T\in\Th}\norm[L^p(T)^d]{\GRAD\unv[T]}^p
  \le \norm[1,p,h]{\sv[h]}^p.
\end{equation}
Moreover, using the definition~\eqref{def:Gh.ph} of $\ph\sv[h]$ followed by the norm equivalence~\eqref{equiv:norms}, and again the definition~\eqref{def:norm.1} of the $\norm[1,p,h]{{\cdot}}$-norm, it is inferred that
\begin{equation}\label{comp:3}
  \norm[L^p(\Omega)^d]{\GRADh\ph\sv[h]}^p
  =
  \sum_{T\in\Th}\norm[L^p(T)^d]{\GRAD\pT\sv[T]}^p
  \lesssim \sum_{T\in\Th}\norm[1,p,T]{\sv[T]}^p
  = \norm[1,p,h]{\sv[h]}^p.
\end{equation}
We conclude by using the triangle inequality in the right-hand side of \eqref{comp:1} and plugging \eqref{comp:2} and \eqref{comp:3} into the resulting equation. \end{proof}

We are now ready to prove the compactness result stated at the beginning of this section.

\begin{proof}[Proof of Proposition~\ref{prop:comp}]
By \eqref{est:norm.dG}, $(\norm[{\rm dG},p]{\unv[h]})_{h\in\mathcal{H}}$ is bounded. The
discrete Rellich--Kondrachov theorem \cite[Theorem 5.6]{Di-Pietro.Ern:12} ensures that,
up to a subsequence, $\unv[h]$ converges in $L^q(\Omega)$ to some $v$. Since $q<p^*$, Corollary \ref{cor:comp-u-pu}
shows that $\ph\sv[h]$ also converges in this space to the same $v$.

It remains to establish that $v\in W^{1,p}_0(\Omega)$ and that $\Gh\sv[h]$ weakly converges to $\GRAD v$.
To this end, we first notice that $\Gh\sv[h]$ is bounded in $L^p(\Omega)^d$ thanks to the norm equivalence \eqref{equiv:norms}.
Hence, up to a subsequence, it weakly converges in $L^p(\Omega)^d$ to some $\mathcal G$. We take $\bphi\in C^\infty(\Real^d)^d$ and observe that
\begin{align*}
  \int_{\Omega}\Gh\sv[h](\vec{x})\SCAL\bphi(\vec{x})d\vec{x}&=\sum_{T\in\Th}(\GT\sv,\bphi)_T\\
  ={}&\sum_{T\in\Th}(\GT\sv-\GRAD\unv[T],\bphi-\lproj[T]{k}\bphi)_T+
  \sum_{T\in\Th}(\GT\sv-\GRAD\unv[T],\lproj[T]{k}\bphi)_T\\
  &+
  \sum_{T\in\Th}(\GRAD\unv[T],\bphi)_T\\
  ={}&\term_1
  +\sum_{T\in\Th}\sum_{F\in\Fh[T]}(\unv[F]-\unv[T],\lproj[T]{k}\bphi\SCAL
  \normal_{TF})_F+
  \sum_{T\in\Th}(\GRAD\unv[T],\bphi)_T&\mbox{(cf. \eqref{def:GT})}\\
  ={}&\term_1
  +\sum_{T\in\Th}\sum_{F\in\Fh[T]}(\unv[F]-\unv[T],(\lproj[T]{k}\bphi-\bphi)\SCAL
  \normal_{TF})_F-
  \sum_{T\in\Th}(\unv[T],\div\bphi)_T&\mbox{(cf. \eqref{eq:comp.1})}\\
  ={}&\term_1
  +\term_2
  -\int_{\Omega}\unv[h](\vec{x})\div\bphi(\vec{x})d\vec{x}.
\end{align*}
In the penultimate line, we used a element-wise integration by parts,
and the relation 
\begin{equation}\label{eq:comp.1}
  \sum_{T\in\Th}\sum_{F\in\Fh[T]}(\unv[F],\bphi\SCAL\normal_{TF})_F=0,
\end{equation}
which follows from the homogeneous Dirichlet boundary condition incorporated in $\UhD$ (cf.~\eqref{def:UhD}) and from 
$\normal_{T_1F}+\normal_{T_2F}=0$ whenever $F\in \Fhi$ is an interface between the
two elements $T_1$ and $T_2$.
If we prove that, as $h\to 0$, $\term_1+\term_2\to 0$, then we can pass
to the limit and we obtain
\begin{equation}\label{reglim:fin}
\int_\Omega \mathcal G(\vec{x})\cdot\bphi(\vec{x})d\vec{x} = 
-\int_\Omega v(\vec{x})\div\bphi(\vec{x})d\vec{x}.
\end{equation}
Taking $\bphi$ compactly supported in $\Omega$ shows that
$\mathcal G=\GRAD v$, and hence that $v\in W^{1,p}(\Omega)$ and
that $\Gh\sv[h]\to \GRAD v$ weakly in $L^p(\Omega)^d$.
Taking then any $\bphi\in C^\infty(\Real^d)^d$ in \eqref{reglim:fin}
and using an integration by parts shows that the trace of $v$
on $\partial \Omega$ vanishes, which establishes that $v\in W^{1,p}_0(\Omega)$.

It therefore only remains to prove that $\term_1+\term_2\to 0$. In what
follows, $A\lesssim B$ means that $A\le MB$ for some $M$ not depending
on $h$, $\bphi$ or $\sv$.
By Lemma \ref{lem:Wkp.interp} (with $m=0$, $s=1$ and
$p'$ instead of $p$) we have $\norm[L^{p'}(T)^d]{\bphi-\lproj[T]{k}\bphi}\lesssim h
\norm[W^{1,p'}(T)^d]{\bphi}$ and thus
\[
|\term_1|
\lesssim h \left(\sum_{T\in\Th}\norm[L^p(T)^d]{\GT\sv[T]-\GRAD\unv[T]}^p\right)^{1/p}
\norm[W^{1,p'}(\Omega)^d]{\bphi}
\lesssim h \left(
\norm[L^p(\Omega)^d]{\Gh\sv[h]}+\norm[L^p(\Omega)^d]{\GRADh\unv[h]}
\right)\norm[W^{1,p'}(\Omega)^d]{\bphi}.
\]
Since $\norm[1,p,h]{\sv[h]}$ is bounded, the norm equivalence \eqref{equiv:norms} together with the definition~\eqref{def:norm.1} of the $\norm[1,p,h]{{\cdot}}$-norm show that both $\norm[L^p(\Omega)^d]{\Gh\sv[h]}$
and $\norm[L^p(\Omega)^d]{\GRADh\unv[h]}$
remain bounded. Hence, $\term_1\to 0$ as $h\to 0$.
The convergence analysis of $\term_2$ is performed in a similar way.
Using Lemma \ref{lem:Wkp.interp.trace} (with $p'$ instead of $p$)
we have $\norm[L^{p'}(F)]{\bphi-\lproj[T]{k}\bphi}
\lesssim h_T^{\frac{1}{p}}\norm[W^{1,p'}(T)^d]{\bphi}$ and thus,
since $h_T\lesssim h_F$ whenever $F\in \Fh[T]$,
\begin{align*} 
|\term_2|
&\lesssim \sum_{T\in\Th}\sum_{F\in\Fh[T]}h_F^{\frac{1}{p}}\norm[L^p(F)]{\unv[F]-\unv[T]}
\norm[W^{1,p'}(T)^d]{\bphi}
\lesssim \left(\sum_{T\in\Th}\sum_{F\in\Fh[T]}h_F
\norm[L^p(F)]{\unv[F]-\unv[T]}^p\right)^{\frac1p}\norm[W^{1,p'}(\Omega)^d]{\bphi}\\
&\lesssim h\norm[1,p,h]{\sv[h]}\norm[W^{1,p'}(\Omega)^d]{\bphi}.
\end{align*}
The convergence of $\term_2$ to $0$ follows. \end{proof}

\subsection{Strong convergence of the interpolants}

The proof of Theorem~\ref{th:conv.hho} relies on a weak-strong convergence argument.
The last ingredient of the convergence analysis is thus the strong convergence of both the discrete gradient and the stabilization contribution when their argument is the interpolate of a smooth function.
We state here this result in a framework covering more general cases than needed in the proof of Theorem \ref{th:conv.hho} (where the argument of the interpolant is in $C^\infty_c(\Omega)$).
For $r\in\mathbb{N}$ and $q\in [1,+\infty]$, $W^{r,q}(\Th)$ denotes the broken space of
functions $\varphi:\Omega\to \Real$ such that, for any $T\in\Th$, $\varphi|_T\in W^{r,q}(T)$. This space
is endowed with the norm 
\[
\norm[W^{r,q}(\Th)]{\varphi}\eqbydef
	\left\{\begin{array}{ll}
		\displaystyle\left(\sum_{T\in\Th} \norm[W^{r,q}(T)]{\varphi}^q\right)^{1/q}&\mbox{ if $q<+\infty$},\\
		\displaystyle\max_{T\in\Th} \norm[W^{r,q}(T)]{\varphi}&\mbox{ if $q=+\infty$}.
	\end{array}
	\right.
\]

\begin{proposition}[Strong convergence of interpolants]\label{prop:str.cv.interp}
Let $(\Th)_{h\in\mathcal H}$ be an admissible mesh sequence, let $p\in [1,+\infty]$, and let $\Ih$ be defined by \eqref{eq:Ih}.
Then, there exists $C$ not depending on $h$ such that
\begin{equation}\label{eq:interp.error.GT}
  \forall \varphi \in W^{1,1}(\Omega)\cap W^{k+2,p}(\Th)\,:\,
  \norm[L^p(\Omega)]{\Gh\Ih\varphi-\GRAD\varphi}\le C h^{k+1}\norm[W^{k+2,p}(\Th)]{\varphi}.
\end{equation}
As a consequence,
\begin{equation}\label{eq:str.cv.GT}
  \forall \varphi\in W^{1,p}(\Omega)\,:\,
  \Gh\Ih\varphi\to \GRAD\varphi\mbox{ strongly in }L^p(\Omega)^d\mbox{ as $h\to 0$}.
\end{equation}
Moreover,
\begin{equation}\label{eq:str.cv.sT}
  \forall \varphi\in W^{1,1}(\Omega)\cap W^{k+2,\infty}(\Th)\,:\,
  \sum_{T\in\Th}s_{T}(\IT\varphi,\IT\varphi)\to 0 \mbox{ as $h\to 0$}.
\end{equation}
\end{proposition}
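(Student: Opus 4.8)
The plan is to treat the three claims separately, the genuinely new content being the stabilization estimate \eqref{eq:str.cv.sT}; throughout, $A\lesssim B$ means $A\le MB$ with $M$ independent of $h$ and $\varphi$. For \eqref{eq:interp.error.GT} I would invoke the commuting property \eqref{eq:commut.GT}, which gives $\restrto{(\Gh\Ih\varphi)}{T}=\GT\IT\varphi=\lproj[T]{k}(\GRAD\varphi)$ for every $T\in\Th$. Hence, element by element, $\Gh\Ih\varphi-\GRAD\varphi$ is exactly the $L^2$-projection error of $\GRAD\varphi$, which I would bound by Lemma~\ref{lem:Wkp.interp} (through Remark~\ref{rem:union.ss}, a polytopal element being a finite union of simplices) applied componentwise with $m=0$ and $s=k+1$, so that $\norm[L^p(T)]{\lproj[T]{k}(\GRAD\varphi)-\GRAD\varphi}\lesssim h_T^{k+1}\seminorm[W^{k+1,p}(T)]{\GRAD\varphi}\lesssim h_T^{k+1}\seminorm[W^{k+2,p}(T)]{\varphi}$. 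Raising to the power $p$, summing over $T$, and using $h_T\le h$ yields \eqref{eq:interp.error.GT} (the case $p=+\infty$ being identical with a maximum in place of the sum).

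For the strong convergence \eqref{eq:str.cv.GT} I would combine this with density. The commuting identity and the $L^p$-stability of the local $L^2$-projector (Lemma~\ref{lem:stab.proj}) first show that $\Gh\Ih$ is bounded from $W^{1,p}(\Omega)$ to $L^p(\Omega)^d$ uniformly in $h$, i.e. $\norm[L^p(\Omega)^d]{\Gh\Ih\varphi}\lesssim\norm[L^p(\Omega)^d]{\GRAD\varphi}$. Then, for $p<+\infty$ and $\varphi\in W^{1,p}(\Omega)$, I would choose a smooth $\varphi_\varepsilon$ with $\norm[W^{1,p}(\Omega)]{\varphi-\varphi_\varepsilon}\le\varepsilon$ and split $\Gh\Ih\varphi-\GRAD\varphi=\Gh\Ih(\varphi-\varphi_\varepsilon)+(\Gh\Ih\varphi_\varepsilon-\GRAD\varphi_\varepsilon)+\GRAD(\varphi_\varepsilon-\varphi)$: the first and third terms are $\lesssim\varepsilon$ by the uniform stability, while the middle one tends to $0$ as $h\to 0$ by \eqref{eq:interp.error.GT} applied to the fixed $\varphi_\varepsilon$. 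Letting $\varepsilon\to 0$ concludes.

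The heart of the proof is \eqref{eq:str.cv.sT}. The key structural fact is that $s_T$ is consistent to order $k+1$: for $\pi\in\Poly{k+1}(T)$ one has $\pT\IT\pi=\pi$ (by the Euler equation \eqref{eq:euler.pT} and the average constraint in \eqref{eq:pT}), hence $\PT\IT\pi=\lproj[T]{k}\pi+(\pi-\lproj[T]{k}\pi)=\pi$ and $\lproj[F]{k}((\IT\pi)_F-\PT\IT\pi)=\lproj[F]{k}(\lproj[F]{k}\pi-\pi)=0$, so that $\seminorm[s,p,T]{\IT\pi}=0$. Since $\seminorm[s,p,T]{{\cdot}}$ is a seminorm and $\IT$ linear, I would set $\psi\eqbydef\varphi-\lproj[T]{k+1}\varphi$ and write $\seminorm[s,p,T]{\IT\varphi}=\seminorm[s,p,T]{\IT\psi}$. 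The decisive simplification is that $\lproj[T]{k}\psi=\lproj[T]{k}\varphi-\lproj[T]{k}\lproj[T]{k+1}\varphi=0$; thus $\IT\psi=(0,(\lproj[F]{k}\psi)_{F\in\Fh[T]})$, and the norm equivalence \eqref{equiv:norms} yields
\[
\seminorm[s,p,T]{\IT\psi}^p\lesssim\norm[1,p,T]{\IT\psi}^p=\sum_{F\in\Fh[T]}h_F^{1-p}\norm[L^p(F)]{\lproj[F]{k}\psi}^p,
\]
the element contribution having vanished. Using $L^p(F)$-stability (Lemma~\ref{lem:stab.proj}) and then the trace estimate of Lemma~\ref{lem:Wkp.interp.trace} with $k+1$ in place of $k$, $s=k+2$ and $m=0$, I would bound $\norm[L^p(F)]{\lproj[F]{k}\psi}\lesssim\norm[L^p(F)]{\psi}\lesssim h_T^{k+2-\frac1p}\seminorm[W^{k+2,p}(T)]{\varphi}$; plugging this in, and using $h_F\approx h_T$ and $\card{\Fh[T]}\le\Np$, the powers combine as $h_T^{1-p}h_T^{(k+2)p-1}=h_T^{(k+1)p}$, so $s_T(\IT\varphi,\IT\varphi)\lesssim h_T^{(k+1)p}\seminorm[W^{k+2,p}(T)]{\varphi}^p$. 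Summing over $T$ gives $\sum_{T\in\Th}s_T(\IT\varphi,\IT\varphi)\lesssim h^{(k+1)p}\sum_{T\in\Th}\seminorm[W^{k+2,p}(T)]{\varphi}^p$, and since $\varphi\in W^{k+2,\infty}(\Th)$ the last sum is bounded by $|\Omega|\,\norm[W^{k+2,\infty}(\Th)]{\varphi}^p$ uniformly in $h$; as $k+1\ge 1$, the right-hand side tends to $0$.

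I expect the only real obstacle to lie in \eqref{eq:str.cv.sT}, and it is organizational rather than analytic: the term $s_T$ is built from the intricate operator $\PT$, and the difficulty is to make $\PT$ disappear. This is precisely what the choice $\pi=\lproj[T]{k+1}\varphi$ achieves, through the idempotence-type identity $\lproj[T]{k}\lproj[T]{k+1}=\lproj[T]{k}$, which kills simultaneously the discrete-gradient and the element-projection pieces and leaves only boundary terms reachable by the trace lemma. The scaling must also be watched: the weight $h_F^{1-p}$ in $s_T$ is exactly what compensates the $h_T^{-1/p}$ loss in passing from a volume to a face norm, and it is the use of $W^{k+2,\infty}(\Th)$ rather than $W^{k+2,p}(\Th)$ that keeps the broken seminorm bounded as the mesh is refined.
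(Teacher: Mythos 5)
Your treatment of \eqref{eq:interp.error.GT} and \eqref{eq:str.cv.GT} is correct and coincides with the paper's: the commuting property \eqref{eq:commut.GT} reduces the first estimate to the $W^{s,p}$-approximation properties of $\lproj[T]{k}$ applied componentwise to $\GRAD\varphi$, and the second follows by density combined with the uniform $L^p$-bound on $\Gh\Ih$ obtained from \eqref{eq:commut.GT} and Lemma~\ref{lem:stab.proj}.

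For \eqref{eq:str.cv.sT} you take a genuinely different, and more self-contained, route. The paper imports the $L^2$-estimate $h_F^{-1/2}\norm[L^2(F)]{\lproj[F]{k}((\IT\varphi)_F-\PT\IT\varphi)}\lesssim h_T^{k+1}\norm[H^{k+2}(T)]{\varphi}$ from the linear HHO analysis and upgrades it to the $L^p$-setting via the reverse Lebesgue embeddings of Lemma~\ref{lem:lebemb}, the $W^{k+2,\infty}$ regularity being used to absorb the resulting factor $|T|^{p/2}$. You instead exploit directly the polynomial consistency of the stabilization: $\seminorm[s,p,T]{{\cdot}}$ is a seminorm vanishing on $\IT\Poly{k+1}(T)$ (since $\pT\IT\pi=\pi$, hence $\PT\IT\pi=\pi$, for $\pi\in\Poly{k+1}(T)$), so it may be evaluated on $\IT(\varphi-\lproj[T]{k+1}\varphi)$, whose element component vanishes by the identity $\lproj[T]{k}\lproj[T]{k+1}=\lproj[T]{k}$; the norm equivalence \eqref{equiv:norms} then leaves only face terms, which Lemma~\ref{lem:stab.proj} and Lemma~\ref{lem:Wkp.interp.trace} (with $k+1$ in place of $k$, $s=k+2$, $m=0$) control, and the exponent bookkeeping $h_F^{1-p}h_T^{(k+2)p-1}=h_T^{(k+1)p}$ is right. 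Your route stays entirely within the lemmas proved in this paper, avoids the external reference, and yields the slightly sharper local bound $s_T(\IT\varphi,\IT\varphi)\lesssim h_T^{(k+1)p}\seminorm[W^{k+2,p}(T)]{\varphi}^p$, from which the $W^{k+2,\infty}(\Th)$ hypothesis is only needed to sum over the elements (in fact $W^{k+2,p}(\Th)$ would already suffice there); the paper's route is shorter on the page but leans on the earlier $L^2$ result and on a change of Lebesgue exponent. Both arguments are valid.
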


\begin{proof} 
We write $A\lesssim B$ for $A\le MB$ where $M$ does not depend on $h$
or $\varphi$.

\emph{Step 1: Proof of~\eqref{eq:interp.error.GT}.}
By the commuting property~\eqref{eq:commut.GT} and the
approximation property \eqref{eq:approx.lproj.Wsp} applied to $v=\partial_i \varphi$, $s=k+1$ and $m=0$,
we have $\norm[L^p(T)^d]{\GT\IT\varphi-\GRAD\varphi}\lesssim h_T^{k+1}\norm[W^{k+2,p}(T)]{\varphi}$ for all $T\in\Th$.
Raising this inequality to the power $p$ and summing over $T\in\Th$
(if $p$ is finite, otherwise taking the maximum over $T\in\Th$) gives \eqref{eq:interp.error.GT}.

\medskip

\emph{Step 2: Proof of~\eqref{eq:str.cv.GT}.}
We reason by density. We take $(\varphi_\epsilon)_{\epsilon>0}\subset W^{k+2,p}(\Omega)$
that converges to $\varphi$ in $W^{1,p}(\Omega)$ as $\epsilon\to 0$ and
we write, inserting $\pm(\Gh\Ih\varphi_\epsilon-\GRAD\varphi_\epsilon)$ and using the triangle inequality,
\begin{align*}
\norm[L^p(\Omega)^d]{\Gh\Ih\varphi-\GRAD\varphi}
&\le \norm[L^p(\Omega)^d]{\Gh\Ih(\varphi-\varphi_\epsilon)}
+ \norm[L^p(\Omega)^d]{\Gh\Ih\varphi_\epsilon-\GRAD\varphi_\epsilon}
+ \norm[L^p(\Omega)^d]{\GRAD(\varphi_\varepsilon-\varphi)}\\
&\lesssim \norm[L^p(\Omega)^d]{\GRAD(\varphi-\varphi_\epsilon)}
+ \norm[L^p(\Omega)^d]{\Gh\Ih\varphi_\epsilon-\GRAD\varphi_\epsilon},
\end{align*}
where we have used the commuting property~\eqref{eq:commut.GT} followed by the $L^p$-stability of the $L^2$-projector stated in Lemma \ref{lem:stab.proj} to pass to the second line.
By \eqref{eq:interp.error.GT}, the second term in this right-hand side tends
to $0$ as $h\to 0$. Taking (in that order) the supremum limit as $h\to 0$
and then the supremum limit as $\epsilon\to 0$ concludes the proof
that $\Gh\Ih\varphi\to\GRAD\varphi$ in $L^p(\Omega)^d$.

\medskip

\emph{Step 3: Proof of~\eqref{eq:str.cv.sT}.}
It is proved in \cite[Eq. (46)]{Di-Pietro.Ern.ea:14} that
\[
h_F^{-\frac{1}{2}}\norm[L^2(F)]{\lproj[F]{k}((\IT\varphi)_F-\PT\IT\varphi)}
\lesssim h_T^{k+1}\norm[H^{k+2}(T)]{\varphi}.
\]
Using Lemma \ref{lem:lebemb}, the admissibility of the mesh (which gives
$h_F|F|\approx |T|$ if $F\in\Fh[T]$), and the regularity assumption on $\varphi$, we infer
\begin{align*}
  h_F^{1-p}\norm[L^p(F)]{\lproj[F]{k}((\IT\varphi)_F-\PT\IT\varphi)}^p
  &\lesssim h_F^{1-\frac{p}{2}} |F|^{1-\frac{p}{2}}
  \left(
  h_F^{-\frac{1}{2}}\norm[L^2(F)]{\lproj[F]{k}((\IT\varphi)_F-\PT\IT\varphi)}
  \right)^p\\
  &\lesssim (h_F|F|)^{1-\frac{p}{2}} h_T^{(k+1)p}
  \norm[H^{k+2}(T)]{\varphi}^p\\
  &\lesssim |T|^{1-\frac{p}{2}} h_T^{(k+1)p}|T|^{\frac{p}{2}}\norm[W^{k+2,\infty}(T)]{\varphi}^p\\
  &\lesssim |T| h^{(k+1)p}\norm[W^{k+2,\infty}(\Th)]{\varphi}^p.
\end{align*}
Summing this inequality over $F\in\Fh[T]$ and $T\in\Th$, and recalling the uniform bound~\eqref{eq:Np} over $\card{\Fh[T]}$, we get
$$
\sum_{T\in\Th}s_T(\IT\varphi,\IT\varphi)\lesssim
|\Omega|h^{(k+1)p}\norm[W^{k+2,\infty}(\Th)]{\varphi}^p,
$$
and the proof is complete.
\end{proof}


\section{Convergence analysis}\label{sec:conv.anal}

The following proposition contains an \emph{a priori} estimate, uniform in $h$, on the solution to the discrete problem \eqref{def:hho.scheme}.

\begin{proposition}[A priori estimates]\label{prop:apriori.est}
  Under Assumption \ref{def:adm.Th}, if $\su[h]\in\UhD$ solves \eqref{def:hho.scheme}, then
	there exists $C$ only depending on $\Omega$, $\coera$, $\varrho$, $k$ and $p$ such that
\begin{equation}
  \label{eq:apriori-est}
  \norm[1,p,h]{\su[h]}\le C \norm[L^{p'}(\Omega)]{f}^{\frac{1}{p-1}}.
\end{equation}
\end{proposition}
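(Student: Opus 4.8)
The plan is to test the discrete problem \eqref{eq:hho-glob} against the solution itself, taking $\sv[h]=\su[h]\in\UhD$, and to exploit the coercivity of $\bfa$ to bound the resulting discrete energy from below by a multiple of $\norm[1,p,h]{\su[h]}^p$, while controlling the right-hand side from above by $\norm[L^{p'}(\Omega)]{f}\,\norm[1,p,h]{\su[h]}$ through H\"older's inequality and a discrete Poincar\'e inequality.

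First I would write the energy balance $\asch(\su[h],\su[h])=\int_\Omega f\,\unu[h]\dx$. For the left-hand side, recalling the local form \eqref{eq:hho-loc}, the coercivity assumption \eqref{hyp:ac}, applied pointwise with $s=\unu[T](\vec x)$ and $\vec\xi=\GT\su(\vec x)$, gives $\int_T\bfa(\vec x,\unu[T],\GT\su)\cdot\GT\su\dx\ge\coera\norm[L^p(T)^d]{\GT\su}^p$, while the stabilization contributes exactly $s_T(\su,\su)=\seminorm[s,p,T]{\su}^p\ge 0$ (cf.~\eqref{def:seminorm.spT}). Summing over $T\in\Th$ and invoking the norm equivalence \eqref{equiv:norms} of Lemma~\ref{lem:equivnorms}, I obtain $\asch(\su[h],\su[h])\ge C_1\norm[1,p,h]{\su[h]}^p$, where $C_1$ depends only on $\coera$, $\varrho$, $k$ and $p$ (it absorbs the factor $\min(\coera,1)$ and the norm-equivalence constant).

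For the right-hand side, H\"older's inequality gives $\int_\Omega f\,\unu[h]\dx\le\norm[L^{p'}(\Omega)]{f}\,\norm[L^p(\Omega)]{\unu[h]}$, and the discrete Poincar\'e inequality---namely the case $q=p$ of the discrete Sobolev embedding \eqref{eq:discsob}, which is always available since $p\le p^*$---yields $\norm[L^p(\Omega)]{\unu[h]}\le C_2\norm[1,p,h]{\su[h]}$ with $C_2$ depending only on $\Omega$, $\varrho$, $k$ and $p$. Chaining the two estimates produces $C_1\norm[1,p,h]{\su[h]}^p\le C_2\norm[L^{p'}(\Omega)]{f}\,\norm[1,p,h]{\su[h]}$; discarding the trivial case $\norm[1,p,h]{\su[h]}=0$, I divide by $\norm[1,p,h]{\su[h]}$ and raise the result to the power $\frac{1}{p-1}$ to reach \eqref{eq:apriori-est}, with a final constant depending only on $\Omega$, $\coera$, $\varrho$, $k$ and $p$.

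This argument is essentially mechanical once the functional-analytic machinery is in place, so I do not expect a genuine obstacle; the delicate points are mere bookkeeping. In particular, one must check that $\bfa(\cdot,\unu[T],\GT\su)$ is integrable against $\GT\su$ so that the energy balance is meaningful, which follows from the growth bound \eqref{hyp:ag} together with the boundedness of polynomials on each $T$; and one must ensure that the constants $C_1$ and $C_2$ are genuinely uniform over the admissible mesh sequence, which is precisely what Lemma~\ref{lem:equivnorms} and Proposition~\ref{prop:discsob} guarantee.
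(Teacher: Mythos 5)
Your proposal is correct and follows essentially the same route as the paper: test with $\sv[h]=\su[h]$, use the coercivity \eqref{hyp:ac} and the nonnegativity of $s_T(\su,\su)$ together with the norm equivalence \eqref{equiv:norms} to bound $\norm[1,p,h]{\su[h]}^p$ from above by the right-hand side, then apply H\"older and the discrete Sobolev embedding \eqref{eq:discsob} with $q=p$ and divide. The only cosmetic difference is that you spell out the integrability check and the constant bookkeeping, which the paper leaves implicit.
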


\begin{proof}
	We write $A\lesssim B$ for $A\le MB$ with $M$ having the same dependencies as
$C$ in the proposition.
  Plugging $\sv[h]=\su[h]$ into \eqref{eq:hho-glob} and using the coercivity \eqref{hyp:ac} of $\bfa$ leads to
  \[
  \coera \sum_{T\in\Th}\norm[L^p(T)^d]{ \GT\su }^p + \sum_{T\in\Th}
  \sum_{F\in\Fh[T]}h_F^{1-p}\norm[L^p(F)]{\lproj[F]{k}(\unu[F]-\PT\su)}^p
  \le
  \norm[L^{p'}(\Omega)]{f}\norm[L^p(\Omega)]{\unu[h]}.
  \]
  Recalling the norm equivalence \eqref{equiv:norms}, and using the discrete Sobolev embeddings~\eqref{eq:discsob} with $q=p$ to estimate the second factor in the right-hand side, this gives
  \[
  \norm[1,p,h]{\su[h]}^p
  \lesssim
  \norm[L^{p'}(\Omega)]{f}\norm[L^p(\Omega)]{\unu[h]}
  \lesssim\norm[L^{p'}(\Omega)]{f}\norm[1,p,h]{\su[h]},
  \]
  which concludes the proof since, by assumption, $p>1$.
\end{proof}

We can now prove that the discrete problem \eqref{def:hho.scheme} has at least one solution.

\begin{proof}[Proof of Theorem \ref{th:exist.hho}]
We use \cite[Theorem 3.3]{Deimling:95} (see also \cite{Leray.Lions:65}): If
$(E,\langle\cdot,\cdot\rangle_{E},\norm[E]{{\cdot}})$ is an Euclidean space, and $\Phi:E\to E$ is continuous and satisfies $\frac{\langle \Phi(x),x\rangle_{E}}{\norm[E]{x}}\to +\infty$ as $\norm[E]{x}\to +\infty$, then $\Phi$ is onto.
We take $E=\UhD$, endowed with an arbitrary inner product, and define
$\Phi:\UhD\to \UhD$ by
\[
\forall \sv[h],\sw[h]\in \UhD,\qquad
\langle \Phi(\sv[h]),\sw[h]\rangle_{E}
=\asch(\sv[h],\sw[h]).
\]
Assumptions \eqref{hyp:acarat} and \eqref{hyp:ag} show that $\Phi$ is continuous, and the coercivity \eqref{hyp:ac} of $\bfa$ together with the norm equivalence \eqref{equiv:norms} show that
\[
\langle \Phi(\sv[h]),\sv[h]\rangle_{E} \ge C \norm[1,p,h]{\sv[h]}^p\ge
C_{\Th}\norm[E]{\sv[h]}^p,
\]
where $C_{\Th}>0$ may depend on $\Th$ but does not depend on $\sv[h]$
(we use the equivalence of all norms on the finite-dimensional space $\UhD$). Hence, $\Phi$ is onto.
Let now $\underline{\mathsf{y}}_h\in\UhD$ be such that
$$
\langle \underline{\mathsf{y}}_h, \sw[h]\rangle_{E} 
= \int_{\Omega} f(\vec{x})\unw[h](\vec{x})d\vec{x}
\qquad\forall \sw[h]\in \UhD,
$$
and take $\su[h]\in \UhD$ such that $\Phi(\su[h])=\underline{\mathsf{y}}_h$.
By definition of $\Phi$ and $\underline{\mathsf{y}}_h$, $\su[h]$ is a solution
to the discrete problem \eqref{def:hho.scheme}.
\end{proof}

Let us now turn to the proof of convergence. To improve the legibility of certain formulas, we often drop the variable $\vec{x}$ inside integrals.

\begin{proof}[Proof of Theorem \ref{th:conv.hho}]

\emph{Step 1: Existence of a limit.}
By Propositions \ref{prop:apriori.est} and \ref{prop:comp}, there exists
$u\in W^{1,p}_0(\Omega)$ such that up to a subsequence
as $h\to 0$, $\unu[h]\to u$ and $\ph\su[h]\to u$ in $L^q(\Omega)$ for all $q<p^*$,
and $\Gh\su[h]\to \GRAD u$ weakly in $L^p(\Omega)^d$.
Let us prove that $u$ solves \eqref{eq:weak}.
To this end, we adapt Minty's technique \cite{Minty:63,Leray.Lions:65}
to the discrete setting, as previously done in \cite{Droniou:06,Doniou.Eymard.et.al:12}.

\medskip

\emph{Step 2: Identification of the limit.}
The growth assumption \eqref{hyp:ag} on $\bfa$ ensures that
$\bfa(\cdot,\unu[h],\Gh\su[h])$ is bounded in $L^{p'}(\Omega)^d$,
and converges therefore (upon extracting another subsequence) to some $\bchi$ weakly in this space.
Let $\varphi\in C^\infty_c(\Omega)$.
Plugging $\sv[h]=\Ih\varphi$ into \eqref{def:hho.scheme} gives
\begin{equation}\label{eq:ident.lim.chi}
\int_\Omega \bfa(\vec{x},\unu[h],\Gh\unu[h])\SCAL
\Gh\Ih\varphi=\int_\Omega f \lproj{k}\varphi
-\sum_{T\in\Th}s_T(\su[T],\IT\varphi),
\end{equation}
with $\lproj{k}$ denoting the $L^2$-projector on the broken polynomial space $\Poly{k}(\Th)$.
Using H\"older's inequality followed by the norm equivalence~\eqref{equiv:norms} to bound the first factor, we infer
$$
\begin{aligned}
  \left|\sum_{T\in\Th}s_T(\su,\IT\varphi)\right|
  &\le
  \left(\sum_{T\in\Th}s_T(\su,\su)\right)^{\frac{1}{p'}}
  \left(\sum_{T\in\Th}s_T(\IT\varphi,\IT\varphi)\right)^{\frac{1}{p}} 
  \\
  &\le \norm[1,p,h]{\su[h]}^{\frac{p}{p'}}
  \left(\sum_{T\in\Th}s_T(\IT\varphi,\IT\varphi)\right)^{\frac{1}{p}}.
\end{aligned}
$$
Recalling the a priori bound \eqref{eq:apriori-est} on the exact solution and the strong convergence property~\eqref{eq:str.cv.sT}, we see that this quantity tends to $0$ as $h\to 0$.
Additionally, by the approximation properties of the $L^{2}$-projector stated in Lemma \ref{lem:Wkp.interp} together with the strong convergence property~\eqref{eq:str.cv.GT}, we have $\lproj{k}\varphi\to \varphi$ in $L^p(\Omega)$
and $\Gh\Ih\varphi\to \GRAD\varphi$ in $L^p(\Omega)^d$. We can therefore pass to the limit
$h\to 0$ in \eqref{eq:ident.lim.chi}, and we find
\begin{equation}\label{eq:ident.lim.chi2}
\int_\Omega \bchi\SCAL \GRAD\varphi=\int_\Omega f\varphi.
\end{equation}
By density of $C^\infty_c(\Omega)$ in $W^{1,p}_0(\Omega)$, this relation still holds if $\varphi\in W^{1,p}_0(\Omega)$.

Let us now take $\bL\in L^p(\Omega)^d$ and write, using the monotonicity \eqref{hyp:am} of $\bfa$,
\begin{equation}\label{minty:1}
\int_\Omega [\bfa(\vec{x},\unu[h],\Gh\su[h])-\bfa(\vec{x},\unu[h],\bL)]\cdot[\Gh\su[h]-\bL]\ge 0.
\end{equation}
Use \eqref{def:hho.scheme} and $s_T(\su,\su)\ge 0$
to write
\begin{equation}
\int_\Omega \bfa(\vec{x},\unu[h],\Gh\su[h])\cdot\Gh\su[h]
=\int_\Omega f\unu[h] -\sum_{T\in\Th}s_T(\su,\su)
\le \int_\Omega f\unu[h].
\label{minty.10}\end{equation}
Develop \eqref{minty:1} and plug this relation:
\begin{equation}\label{minty:2}
\int_\Omega f\unu[h]-\int_\Omega \bfa(\vec{x},\unu[h],\Gh\su[h])\SCAL\bL
\ge \int_\Omega \bfa(\vec{x},\unu[h],\bL)\cdot[\Gh\su[h]-\bL].
\end{equation}
Since $\unu[h]\to u$ in $L^q(\Omega)$ for all $q<p^*$, the Caratheodory and growth properties \eqref{hyp:acarat}
and \eqref{hyp:ag} of $\bfa$ show that $\bfa(\vec{x},\unu[h],\bL)\to \bfa(\vec{x},u,\bL)$ strongly in $L^{p'}(\Omega)^d$.
We can therefore pass to the limit in \eqref{minty:2}:
\begin{equation}\label{u.sol}
\int_\Omega f u-\int_\Omega \bchi\cdot\bL\ge \int_\Omega \bfa(\vec{x},u,\bL)\cdot[\GRAD u-\bL].
\end{equation}
The conclusion then follows classically  \cite{Minty:63,Leray.Lions:65}:
Take $v\in W^{1,p}_0(\Omega)$, apply this relation to $\bL=\GRAD u \pm t\GRAD v$
for some $t>0$, use \eqref{eq:ident.lim.chi2} with $\varphi=u\pm t v$, divide by $t$, and let $t\to 0$
using the Caratheodory and growth properties of $\bfa$.
This leads to
\[
\int_\Omega f v = \int_\Omega \bfa(\vec{x},u,\GRAD u)\cdot \GRAD v,
\]
and the proof that $u$ solves \eqref{eq:weak} is complete.

\medskip

\emph{Step 3: Convergence of the gradient.}
It remains to show that if $\bfa$ is strictly monotone, then $\Gh\su[h]
\to \GRAD u$ strongly in $L^p(\Omega)^d$. Let
\begin{equation}\label{str.cv.grad.1}
F_h=[\bfa(\vec{x},\unu[h],\Gh\su[h])-\bfa(\vec{x},\unu[h],\GRAD u)]\cdot[\Gh\su[h]-\GRAD u]\ge 0
\end{equation}
Developing this expression and using \eqref{minty.10}, we can pass to the limit and use
\eqref{eq:ident.lim.chi2} to see that
\[
\limsup_{h\to 0}\int_\Omega F_h\le \int_\Omega fu-\int_\Omega \bchi\SCAL\GRAD u=0.
\]
Hence, $F_h\to 0$ in $L^1(\Omega)$. Up to a subsequence, it therefore converges almost everywhere.
Using the coercivity and growth assumptions \eqref{hyp:ac} and \eqref{hyp:ag} of $\bfa$,
Young's inequality gives
\begin{align}
F_h&\ge \coera |\Gh\su[h]|^p-(\gra(\vec{x})+ \upa|\unu[h]|^r+\upa|\Gh\su[h]|^{p-1})|\GRAD u|
-(\gra(\vec{x})+ \upa|\unu[h]|^r+\upa|\GRAD u|^{p-1})|\GRAD u|\nonumber\\
&\ge \frac{\coera}{2} |\Gh\su[h]|^p-2(\gra(\vec{x})+ \upa|\unu[h]|^r)|\GRAD u|-\upa|\GRAD u|^p
-\frac{\upa^p}{p}\left(\frac{2}{p'\coera}\right)^{p-1}|\GRAD u|^p.\label{str.cv.grad.2}
\end{align}
Since, up to a subsequence, $\unu[h]$ converges a.e., this relation shows that
for a.e. $\vec{x}$, the sequence $(\Gh\su[h](\vec{x}))_{h\in\mathcal H}$ remains bounded.
Let us show that it can only have $\GRAD u(\vec{x})$ as adherence value. If $\zeta$ is
an adherence value of $(\Gh\su[h](\vec{x}))_{h\in\mathcal H}$, then, passing to the limit
in \eqref{str.cv.grad.1} gives, since $F_h\to 0$ and $\unu[h]\to u$ a.e., 
\[
[\bfa(\vec{x},u(\vec{x}),\zeta)-\bfa(\vec{x},u(\vec{x}),\GRAD u(\vec{x}))]\cdot[\zeta-\GRAD u(\vec{x})]=0.
\]
The strict monotonicity of $\bfa$ then shows that $\zeta=\GRAD u(\vec{x})$. Hence,
for a.e. $\vec{x}$, the bounded sequence $(\Gh\su[h](\vec{x}))_{h\in\mathcal H}$ has only $\GRAD u(\vec{x})$ as adherence value, and thus $\Gh\su[h]\to \GRAD u$ a.e. on $\Omega$.

Since $(F_h)_{h\in\mathcal H}$ is 1-equi-integrable (it converges in $L^1(\Omega)$) and
$(|\unu[h]|^r)_{h\in\mathcal H}$ is $p'$-equi-integrable ($p'r<p^*$ and $(\unu[h])_{h\in\mathcal H}$
therefore converges in $L^{p'r}(\Omega)$), \eqref{str.cv.grad.2} shows that $(\Gh\su[h])_{h\in\mathcal H}$
is $p$-equi-integrable. Vitali's theorem then gives the strong convergence of this sequence to $\GRAD u$ in $L^p(\Omega)^d$. \end{proof}


\section{Other boundary conditions}\label{sec:other.bcs}

We briefly discuss here how the HHO scheme is written for non-homogeneous Dirichlet and homogeneous Neumann boundary conditions and hint at the modifications required in the convergence proof.

\subsection{Non-homogeneous Dirichlet boundary conditions}

Non-homogeneous Dirichlet boundary conditions consist in replacing
\eqref{eq:strong:BC} with
\begin{equation}\label{eq:strong:nhd}
  u=g\mbox{ on $\partial\Omega$}
\end{equation}
with $g\in W^{1-\frac{1}{p},p}(\partial\Omega)$.
Denoting by $\tr:W^{1,p}(\Omega)\to W^{1-\frac{1}{p},p}(\partial\Omega)$ the
trace operator, the weak formulation becomes:
\begin{equation}
  \label{eq:weak:nhd}
  \begin{array}{l}
    \mbox{Find }u\in W^{1,p}(\Omega)\mbox{ such that }\tr(u)=g\mbox{ and, for all $v\in W^{1,p}_0(\Omega)$},\\
    \displaystyle \int_\Omega \bfa(\vec{x},u(\vec{x}),\GRAD u(\vec{x}))\cdot
    \GRAD v(\vec{x})\dx=\int_\Omega f(\vec{x})v(\vec{x})\dx.
  \end{array}
\end{equation}
As in Remark \ref{rem:domain.interp} we notice that $\lproj[F]{k}g$ is well defined for any $F\in\Fhb$.
Hence, we can define the vector $\su[g,h]\in\Uh$ such that
\[
\unu[g,T]=0\quad\forall T\in\Th,\qquad
\unu[g,F]=0\quad\forall F\in\Fhi,\qquad
\unu[g,F]=\lproj[F]{k}g\quad\forall F\in\Fhb.
\]
We then set
\[
\UhD[g]\eqbydef\UhD+\su[g,h],
\]
and write the discrete problem corresponding to \eqref{eq:weak:nhd} as
\begin{equation}
  \label{eq:hho-glob:nhd}
  \mbox{Find $\su[h]\in \UhD[g]$ such that, for any $\sv[h]\in \UhD$, }\asch(\su[h],\sv[h])
  = \int_{\Omega} f \unv[h],
\end{equation}
with $\asch$ defined by \eqref{eq:hho-assembly}--\eqref{eq:hho-loc}.
The convergence analysis for non-homogeneous Dirichlet boundary conditions
is performed as usual by utilizing a lifting of the boundary conditions.
We take $\widetilde{g}\in W^{1,p}(\Omega)$ and let $\sg[h]=\Ih\widetilde{g}$.
Making $\sv[h]=\su[h]-\sg[h]\in\Uh$ in \eqref{eq:hho-glob:nhd} and
using $\norm[1,p,h]{\sg[h]}\lesssim \norm[W^{1,p}(\Omega)]{g}$ (see
Proposition \ref{prop:stab.interp} below) enables us to prove 
\emph{a priori} estimates on $\norm[1,p,h]{\su[h]-\sg[h]}$.

Proposition \ref{prop:str.cv.interp} does not rely on the homogeneous
boundary conditions and therefore shows that $\GT\sg[h]\to \GRAD \widetilde{g}$ in $L^p(\Omega)^d$
as $h\to 0$. Since $\lproj[h]{k}\widetilde{g}\to \widetilde{g}$ in $L^p(\Omega)$
(see Lemma \ref{lem:Wkp.interp}), applying Proposition \ref{prop:comp}
to $\sv[h]=\su[h]-\sg[h]$ shows that, for some $u\in W^{1,p}(\Omega)$ such
that $u-\widetilde{g}\in W^{1,p}_0(\Omega)$ (i.e. $\gamma(u)=g$),
up to a subsequence $\unu[h]\to u$ in $L^p(\Omega)$ and $\GT\su[h]\to \GRAD u$
in $L^p(\Omega)^d$ as $h\to 0$. The proof that $u$ is a solution
to \eqref{eq:weak:nhd} is then done in a similar way as for homogeneous
boundary conditions.

\begin{proposition}[Discrete norm estimate for interpolate of $W^{1,p}$ functions]
  \label{prop:stab.interp}
  Let $(\Th)_{h\in\mathcal H}$ be an admissible mesh sequence, and let $k\in\mathbb{N}$.
  Let $v\in W^{1,p}(\Omega)$ and let $\Ih v\in \Uh$ be the interpolant
  defined by \eqref{eq:Ih} and \eqref{eq:IT}. Then,
  $\norm[1,p,T]{\IT v}\lesssim \norm[W^{1,p}(T)]{v}$ for all $T\in\Th$, and thus
  $\norm[1,p,h]{\Ih v}\lesssim \norm[W^{1,p}(\Omega)]{v}$.
\end{proposition}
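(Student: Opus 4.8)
The plan is to estimate separately the two contributions to the local seminorm \eqref{def:norm.1Tp} evaluated at $\IT v=(\lproj[T]{k}v,(\lproj[F]{k}v)_{F\in\Fh[T]})$, namely the element gradient term $\norm[L^p(T)^d]{\GRAD\lproj[T]{k}v}$ and the weighted face terms $h_F^{1-p}\norm[L^p(F)]{\lproj[F]{k}v-\lproj[T]{k}v}^p$, and then to assemble the global bound by raising the local estimate to the power $p$ and summing over $T\in\Th$. Throughout, $A\lesssim B$ stands for $A\le MB$ with $M$ depending only on $\Omega$, $\varrho$, $k$ and $p$.

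For the gradient term I would invoke the $W^{1,p}$-stability of the $L^2$-projector: by Corollary \ref{cor:Wsp.stab}(i) with $s=1$ (applicable to mesh elements thanks to Remark \ref{rem:union.ss}), we have $\norm[L^p(T)^d]{\GRAD\lproj[T]{k}v}=\seminorm[W^{1,p}(T)]{\lproj[T]{k}v}\lesssim\seminorm[W^{1,p}(T)]{v}$.

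The face terms are the delicate point, and I expect them to be the main obstacle: the trace of $v$ on $F$ is a priori only in $W^{1-\frac1p,p}(F)$, so one cannot apply a face-wise approximation estimate for $\lproj[F]{k}$ directly. The key observation I would use is that the restriction to a hyperplanar face of a polynomial of total degree $\le k$ is again a polynomial of degree $\le k$ on that face; hence $\lproj[F]{k}$ leaves $(\lproj[T]{k}v)|_F$ invariant, and on $F$ one may rewrite
\[
  \lproj[F]{k}v-\lproj[T]{k}v=\lproj[F]{k}\bigl(v-\lproj[T]{k}v\bigr).
\]
The $L^p$-stability of $\lproj[F]{k}$ (Lemma \ref{lem:stab.proj}, with $\delta=\varrho$ for faces by Remark \ref{rem:geom.reg}) then gives $\norm[L^p(F)]{\lproj[F]{k}v-\lproj[T]{k}v}\lesssim\norm[L^p(F)]{v-\lproj[T]{k}v}$, which reduces the face term to the trace of the element-projection error --- exactly the quantity controlled by Lemma \ref{lem:Wkp.interp.trace}.

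Finally I would combine these estimates. Using $h_F\approx h_T$ from \eqref{eq:hF} and the trace approximation bound \eqref{eq:approx.lproj.Wsp.trace} with $m=0$ and $s=1$, namely $\seminorm[{W^{0,p}(\Fh[T])}]{v-\lproj[T]{k}v}\lesssim h_T^{1-\frac1p}\seminorm[W^{1,p}(T)]{v}$, the weighted sum of face terms becomes
\[
  \sum_{F\in\Fh[T]}h_F^{1-p}\norm[L^p(F)]{\lproj[F]{k}v-\lproj[T]{k}v}^p
  \lesssim h_T^{1-p}\,\seminorm[{W^{0,p}(\Fh[T])}]{v-\lproj[T]{k}v}^p
  \lesssim h_T^{1-p}\,h_T^{\,p-1}\,\seminorm[W^{1,p}(T)]{v}^p=\seminorm[W^{1,p}(T)]{v}^p,
\]
the powers of $h_T$ cancelling exactly. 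Collecting the two bounds yields $\norm[1,p,T]{\IT v}\lesssim\seminorm[W^{1,p}(T)]{v}\le\norm[W^{1,p}(T)]{v}$; raising to the power $p$, summing over $T\in\Th$, and using $\sum_{T\in\Th}\norm[W^{1,p}(T)]{v}^p=\norm[W^{1,p}(\Omega)]{v}^p$ gives the global estimate $\norm[1,p,h]{\Ih v}\lesssim\norm[W^{1,p}(\Omega)]{v}$.
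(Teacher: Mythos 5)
Your proof is correct and follows essentially the same route as the paper's: $W^{1,p}$-stability of $\lproj[T]{k}$ for the gradient term, the identity $\lproj[F]{k}v-\lproj[T]{k}v=\lproj[F]{k}(v-\lproj[T]{k}v)$ combined with Lemma~\ref{lem:stab.proj} and Lemma~\ref{lem:Wkp.interp.trace} (with $m=0$, $s=1$) for the face terms, and summation over elements for the global bound. The only cosmetic difference is that you spell out explicitly why $\lproj[F]{k}$ fixes $(\lproj[T]{k}v)|_F$, which the paper leaves implicit.
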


\begin{proof} 
  Set $\sv[h]\eqbydef\Ih v$ and let $T\in\Th$.
  Since $\unv[T]=\lproj[T]{k}v$, Corollary \ref{cor:Wsp.stab} with $s=1$ shows that
  $\norm[L^p(T)]{\GRAD \unv[T]}\lesssim \norm[W^{1,p}(T)]{v}$. This takes care of the first
  term in $\norm[1,p,T]{\sv}$. To deal with the second term, we use 
  Lemma \ref{lem:stab.proj} with $U=F$ and then Lemma 
  \ref{lem:Wkp.interp.trace} with $m=0$ and $s=1$ to write
  \[
  \norm[L^p(F)]{\unv[F]-\unv[T]}=\norm[L^p(F)]{\lproj[F]{k}v-\lproj[T]{k}v}
  =\norm[L^p(F)]{\lproj[F]{k}(v-\lproj[T]{k}v)}
  \lesssim \norm[L^p(F)]{v-\lproj[T]{k}v}
  \lesssim h_T^{1-\frac1p}\norm[W^{1,p}(T)]{v}.
  \]
  Raising this to the power $p$ and using $h_T\lesssim h_F$ gives
  $h_F^{1-p}\norm[L^p(F)]{\unv[F]-\unv[T]}^p\lesssim \norm[W^{1,p}(T)]{v}^p$.
  The global bound is then inferred raising the local bounds to the power $p$ and summing over $T\in\Th$.
\end{proof}

\subsection{Homogeneous Neumann boundary conditions}

We assume that
\[
\int_\Omega f(\vec{x})\dx=0.
\]
Homogeneous Neumann boundary conditions for elliptic Leray--Lions problems consist in replacing
\eqref{eq:strong:BC} with
\begin{equation}\label{eq:strong:neu}
  \bfa(\cdot,u,\GRAD u)\SCAL\normal=0\mbox{ on $\partial\Omega$},
\end{equation}
where $\normal$ is the outer normal to $\partial\Omega$. The weak
formulation of \eqref{eq:strong:PDE}--\eqref{eq:strong:neu} is
\begin{equation}
  \label{eq:weak:neu}
  \begin{array}{l}
    \text{Find $u\in W^{1,p}(\Omega)$ such that $\int_\Omega u(\vec{x})\dx=0$ and, for all $v\in W^{1,p}(\Omega)$},\\
    \displaystyle \int_\Omega \bfa(\vec{x},u(\vec{x}),\GRAD u(\vec{x}))\cdot
    \GRAD v(\vec{x})\dx=\int_\Omega f(\vec{x})v(\vec{x})\dx.
  \end{array}
\end{equation}
The HHO scheme for \eqref{eq:weak:neu} reads
\begin{equation}
  \label{eq:hho-glob:nhn}
  \begin{array}{l}
    \displaystyle  \mbox{Find $\su[h]\in \Uh$ such that }\int_\Omega \unu[h](\vec{x})\dx=0\mbox{ and, for any $\sv[h]\in \Uh$, }
    \displaystyle\asch(\su[h],\sv[h])  = \int_{\Omega} f \unv[h]
  \end{array}
\end{equation}
with $\asch$ still defined by \eqref{eq:hho-assembly}--\eqref{eq:hho-loc}.

To carry out the convergence analysis from Section \ref{sec:conv.anal}, we need a few
results. The first one is a discrete Poincar\'e--Wirtinger--Sobolev inequality, which bounds
to the $L^{p^*}$-norm of discrete functions by their discrete norm. This immediately
gives \emph{a priori} estimates on the solution to the scheme (Proposition \ref{prop:apriori.est}).
The second result is a discrete Rellich theorem for functions with zero average and
bounded discrete norm (this is the equivalent of Proposition \ref{prop:comp}).
The proofs of both results are based on Lemma \ref{lem:poi-wir} and on a decomposition
of functions in $\Uh$ into low-order (piecewise-constant) vectors in $\Uhzero$,
and their higher order variation.

\begin{lemma}[Discrete Poincar\'e--Wirtinger--Sobolev inequality for broken polynomial
    functions with zero global average]\label{lem:poi-wir-glob}
  Let $(\Th)_{h\in\mathcal{H}}$ be an admissible mesh sequence, and let $q=p^*$
if $p\neq d$, and $q\in [1,+\infty)$ if $p=d$.
  Then, there exists $C$ only depending on $\Omega$, $\varrho$, $k$, $q$ and $p$ such that,
  for all $\sv[h]\in\Uh$ satisfying $\int_\Omega \unv[h](\vec{x})\dx=0$, we have
  \begin{equation}\label{eq:poi-wir-glob}
    \norm[L^{q}(\Omega)]{\unv[h]}
    \le C \norm[1,h,p]{\sv[h]}.
  \end{equation}
\end{lemma}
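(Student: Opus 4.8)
The plan is to split any $\sv[h]\in\Uh$ satisfying $\int_\Omega\unv[h]\dx=0$ into a lowest-order piece and a higher-order remainder, and to estimate each separately by $\norm[1,p,h]{\sv[h]}$. Concretely, I would introduce the piecewise-constant vector $\sv[h]^{0}\in\Uhzero$ whose cell components are $\lproj[T]{0}\unv[T]$ ($T\in\Th$) and whose face components are $\lproj[F]{0}\unv[F]$ ($F\in\Fh$), and denote by $\unv[h]^{0}\in\Poly{0}(\Th)$ the associated broken function, $\restrto{\unv[h]^{0}}{T}=\lproj[T]{0}\unv[T]$. Since $\int_T(\unv[T]-\lproj[T]{0}\unv[T])\dx=0$, the global zero-average constraint is inherited: $\int_\Omega\unv[h]^{0}\dx=\int_\Omega\unv[h]\dx=0$. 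By the triangle inequality it then suffices to bound $\norm[L^q(\Omega)]{\unv[h]-\unv[h]^{0}}$ and $\norm[L^q(\Omega)]{\unv[h]^{0}}$. Throughout, I would first reduce to the case $q\ge p$: the remaining possibility $q<p$ (which can occur only when $p=d$) follows from Hölder's inequality on the bounded domain $\Omega$, and $\max(q,p)$ still lies in the admissible ranges used below.

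For the higher-order remainder, each local component $\restrto{(\unv[h]-\unv[h]^{0})}{T}=\unv[T]-\lproj[T]{0}\unv[T]$ has zero mean on $T$, so Lemma~\ref{lem:poi-wir} applies. Because the broken gradient annihilates the piecewise-constant $\unv[h]^{0}$, we have $\GRADh(\unv[h]-\unv[h]^{0})=\GRADh\unv[h]$, whence
\[
\norm[L^q(\Omega)]{\unv[h]-\unv[h]^{0}}\le C\,h^{1+\frac{d}{q}-\frac{d}{p}}\norm[L^p(\Omega)^d]{\GRADh\unv[h]}.
\]
By the definition~\eqref{def:norm.1} of the discrete norm, $\norm[L^p(\Omega)^d]{\GRADh\unv[h]}\le\norm[1,p,h]{\sv[h]}$, and for $q\ge p$ in the admissible range the exponent $1+\frac{d}{q}-\frac{d}{p}$ is nonnegative (vanishing exactly when $q=p^*$ with $p<d$), so $h^{1+\frac{d}{q}-\frac{d}{p}}\lesssim 1$ since $h\le\mathrm{diam}(\Omega)$ is bounded. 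This yields $\norm[L^q(\Omega)]{\unv[h]-\unv[h]^{0}}\lesssim\norm[1,p,h]{\sv[h]}$.

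For the lowest-order piece I would proceed in two steps. First I would control $\norm[1,p,h]{\sv[h]^{0}}$ (whose gradient contribution vanishes) by $\norm[1,p,h]{\sv[h]}$. On $F\in\Fh[T]$ I write $\lproj[F]{0}\unv[F]-\lproj[T]{0}\unv[T]=\lproj[F]{0}(\unv[F]-\unv[T])+\lproj[F]{0}(\unv[T]-\lproj[T]{0}\unv[T])$; the $L^p$-stability of the face projector (Lemma~\ref{lem:stab.proj}) bounds the $L^p(F)$-norm of the two summands by $\norm[L^p(F)]{\unv[F]-\unv[T]}$ and $\norm[L^p(F)]{\unv[T]-\lproj[T]{0}\unv[T]}$, and the latter is $\lesssim h_T^{1-\frac1p}\norm[L^p(T)^d]{\GRAD\unv[T]}$ by Lemma~\ref{lem:Wkp.interp.trace} applied to $v=\unv[T]$ with $k=0$, $m=0$, $s=1$. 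Multiplying by $h_F^{1-p}$, using $h_F\approx h_T$, raising to the power $p$ and summing over $F\in\Fh[T]$ and $T\in\Th$ gives $\norm[1,p,h]{\sv[h]^{0}}\lesssim\norm[1,p,h]{\sv[h]}$. Second, I would invoke the lowest-order discrete Poincaré--Wirtinger--Sobolev inequality for zero-average piecewise-constant hybrid functions, $\norm[L^q(\Omega)]{\unv[h]^{0}}\lesssim\norm[1,p,h]{\sv[h]^{0}}$, of finite-volume/HMM type. Combining the two steps and adding the remainder estimate concludes the proof.

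The main obstacle is precisely this lowest-order inequality: unlike the Dirichlet situation of Proposition~\ref{prop:discsob}, here there is no boundary control, the compensating ingredient being the global zero-average condition, so one cannot simply apply the discrete Sobolev embedding~\eqref{est:sobodG} (whose right-hand side penalizes boundary values). The delicate point is therefore to establish a Sobolev/Poincaré--Wirtinger bound for piecewise-constant hybrid functions involving only interior face differences, uniformly over admissible polytopal mesh sequences; this is a zero-average variant of~\eqref{est:sobodG} obtainable by the low-order discrete functional analysis techniques of the references cited in the introduction (Droniou--Eymard, Eymard--Gallou\"et). By comparison, the reduction of the higher-order remainder via Lemma~\ref{lem:poi-wir} and the stability passage $\norm[1,p,h]{\sv[h]^{0}}\lesssim\norm[1,p,h]{\sv[h]}$ are routine.
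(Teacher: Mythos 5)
Your proposal is correct and follows essentially the same route as the paper: the same splitting $\unv[h]=\unv[h]^{0}+(\unv[h]-\unv[h]^{0})$ into a piecewise-constant hybrid vector and a locally zero-mean remainder, Lemma~\ref{lem:poi-wir} for the remainder, a stability bound of the low-order discrete norm by $\norm[1,p,h]{\sv[h]}$, and an appeal to a lowest-order finite-volume-type Sobolev inequality for zero-average piecewise constants. The paper likewise only cites that last ingredient from the low-order discrete functional analysis literature rather than proving it, so your identification of it as the one non-routine external input matches the paper's own treatment.
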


\begin{proof} Here, $A\lesssim B$ means that $A\le MB$ with $M$ only depending on
  $\Omega$, $\varrho$, $k$ and $p$.
  We define $\sv[h]^0\in\Uhzero$ and $\unv[h]^1\in \Poly{k}(\Th)$ by:
  \begin{gather*}
    \unv[T]^0=\lproj[T]{0}\unv[T]\quad\forall T\in\Th\,,\qquad
    \unv[F]^0=\lproj[F]{0}\unv[F]\quad\forall F\in\Fh,
    \\
    \unv[T]^1=\unv[T]-\lproj[T]{0}\unv[T]=\unv[T]-\unv[T]^0\quad\forall T\in\Th.
  \end{gather*}
  By Lemma \ref{lem:poi-wir} we have
  \begin{equation}\label{PSW.1}
    \norm[L^{q}(\Omega)]{\unv[h]^1}\lesssim \left(\sum_{T\in\Th}\norm[L^p(T)]{\GRAD \unv[T]}^p\right)^{1/p}.
  \end{equation}
  We recall the definition of the discrete $W^{1,p}$-norm on $\Uhzero$ from~\cite{koala}:
  \[
  \norm[W^{1,p},\Th]{\sv[h]^0}=\left(\sum_{T\in\Th}\sum_{F\in\Fh[T]}|T|\left|\frac{\unv[T]^0-\unv[F]^0}{h_T}\right|^p\right)^{\frac1p}
  \]
  (the genuine discrete $W^{1,p}$-norm in \cite{koala} involves a different coefficient
  than $|T|$ in this sum, but under Assumption \ref{def:adm.Th} this coefficient
  is $\approx |T|$).
  Since $\sum_{T\in\Th}|T|\unv[T]^0=\int_\Omega  \unv[h](\vec{x})d\vec{x}=0$, \cite{koala} gives
  \begin{equation}\label{PSW.2}
    \norm[L^{q}(\Omega)]{\unv[h]^0}\lesssim \norm[W^{1,p},\Th]{\sv[h]^0}.
  \end{equation}
  By noticing that $\unv[h]=\unv[h]^0+\unv[h]^1$, the result follows from \eqref{PSW.1} and \eqref{PSW.2} provided that
  \begin{equation}\label{PSW.3}
    \norm[W^{1,p},\Th]{\sv[h]^0}\lesssim \norm[1,h,p]{\sv[h]}.
  \end{equation}
  An easy generalisation of \cite[Lemma 6.3]{Droniou.Eymard:06} and
  \cite[Lemma 6.6]{Droniou.Eymard:09} (see \cite{koala} for details) shows that
  \[
  \left|\lproj[F]{0}\unv[T]-\lproj[T]{0}\unv[T]\right|^p
  =\left|\frac{1}{|F|}\int_F \unv[T](\vec{x})\dsx-
  \frac{1}{|T|}\int_T\unv[T](\vec{x})d\vec{x}\right|^p
  \lesssim \frac{h_T^p}{|T|}\int_T |\GRAD \unv[T](\vec{x})|^pd\vec{x}.
  \]
  Using the triangular and Jensen's inequalities, and the relations $|T|\lesssim |F|h_F$
  and $h_F\le h_T$, we infer
  \begin{align*}
    |\unv[F]^0-\unv[T]^0|^p&\lesssim \left|\lproj[F]{0} \unv[F]-\lproj[F]{0}\unv[T]\right|^p +
    \frac{h_T^p}{|T|}\int_T |\GRAD \unv[T](\vec{x})|^pd\vec{x}\\
    &\lesssim \frac{1}{|F|}\int_F |\unv[F](\vec{x})-\unv[T](\vec{x})|^p\dsx
    +\frac{h_T^p}{|T|}\norm[L^p(T)^d]{\GRAD \unv[T]}^p\\
    &\lesssim \frac{h_T^p}{|T|}h_F^{1-p}\norm[L^p(F)]{\unv[F]-\unv[T]}^p
    +\frac{h_T^p}{|T|}\norm[L^p(T)^d]{\GRAD \unv[T]}^p.
  \end{align*}
  Multiplying by $\frac{|T|}{h_T^p}$ and summing over $F\in\Fh[T]$ and $T\in\Th$
  gives \eqref{PSW.3}.
\end{proof}

\begin{proposition}[Compactness result for broken polynomial
    function with zero global average]
  Let $(\Th)_{h\in\mathcal{H}}$ be an admissible mesh sequence and let
  $\sv\in \Uh$ be such that $(\norm[1,h,p]{\sv[h]})_{h\in\mathcal H}$ is bounded
  and, for all $h\in\mathcal H$, $\int_\Omega \unv[h](\vec{x})\dx=0$.
  Then, there exists $v\in W^{1,p}(\Omega)$ such that $\int_\Omega v(\vec{x})d\vec{x}=0$ and, up to a subsequence as $h\to 0$, recalling the definition~\eqref{eq:p*} of the Sobolev index $p^*$,
  \begin{itemize}
  \item $\unv[h]\to v$ and $\ph\sv[h]\to v$ strongly in $L^q(\Omega)$ for all $q<p^*$,
  \item $\Gh\sv[h]\to \GRAD v$ weakly in $L^p(\Omega)^d$.
  \end{itemize}
\end{proposition}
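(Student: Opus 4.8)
The plan is to follow the proof of Proposition~\ref{prop:comp} as closely as possible, the only structural change being that the discrete Sobolev bound~\eqref{est:sobodG}, which relied on the boundary condition encoded in $\UhD$, must be replaced by the piecewise-constant/fluctuation decomposition already used in the proof of Lemma~\ref{lem:poi-wir-glob}. First I would split $\sv[h]$ by setting, for all $T\in\Th$ and $F\in\Fh$,
\[
\unv[T]^0=\lproj[T]{0}\unv[T],\qquad \unv[F]^0=\lproj[F]{0}\unv[F],\qquad \unv[T]^1=\unv[T]-\unv[T]^0,
\]
so that $\sv[h]^0\in\Uhzero$ inherits the zero global average $\sum_{T\in\Th}|T|\unv[T]^0=\int_\Omega\unv[h]\dx=0$, while the broken polynomial function $\unv[h]^1$ has \emph{local} zero average on each element.

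Next I would treat the two parts separately. For the fluctuation, Lemma~\ref{lem:poi-wir} applied to $\unv[h]^1$, together with the trivial bound of its broken gradient by the discrete norm, gives $\norm[L^q(\Omega)]{\unv[h]^1}\lesssim h^{1+\frac{d}{q}-\frac{d}{p}}\norm[1,p,h]{\sv[h]}$; since the exponent of $h$ is strictly positive for $q<p^*$ and $\norm[1,p,h]{\sv[h]}$ is bounded, $\unv[h]^1\to 0$ strongly in $L^q(\Omega)$. For the piecewise-constant part, estimate~\eqref{PSW.3} obtained inside the proof of Lemma~\ref{lem:poi-wir-glob} shows that $\norm[W^{1,p},\Th]{\sv[h]^0}\lesssim\norm[1,p,h]{\sv[h]}$ is bounded; combined with the zero-average constraint, the low-order (finite-volume-type) discrete Rellich--Kondrachov theorem for piecewise-constant functions on general meshes (\cite{koala}, building on \cite{Droniou.Eymard:06,Eymard.Gallouet.ea:10}) yields a subsequence along which $\unv[h]^0\to v$ strongly in $L^q(\Omega)$ for some $v$. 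Writing $\unv[h]=\unv[h]^0+\unv[h]^1$ gives $\unv[h]\to v$ in $L^q(\Omega)$ for all $q<p^*$ (the range $q<p$ following by interpolation on the bounded domain $\Omega$); passing to the limit in $\int_\Omega\unv[h]\dx=0$ gives $\int_\Omega v\dx=0$, and Corollary~\ref{cor:comp-u-pu}, whose bound uses no boundary condition, transfers the convergence to $\ph\sv[h]\to v$ in the same spaces.

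It then remains to identify the weak limit of the gradient. By the norm equivalence~\eqref{equiv:norms}, $(\Gh\sv[h])_h$ is bounded in $L^p(\Omega)^d$, hence converges weakly (up to a subsequence) to some $\mathcal G$. Testing against $\bphi\in C^\infty_c(\Omega)^d$ and running the very same element-wise integration by parts as in the proof of Proposition~\ref{prop:comp}, I observe that the face identity~\eqref{eq:comp.1} still holds: interface contributions cancel through $\normal_{T_1F}+\normal_{T_2F}=0$, and boundary contributions vanish because $\bphi$ has compact support (this is precisely the role played by the boundary condition in the Dirichlet case). The remainder terms $\term_1$ and $\term_2$ are estimated exactly as before and tend to $0$, so passing to the limit yields $\int_\Omega\mathcal G\SCAL\bphi\dx=-\int_\Omega v\,\div\bphi\dx$ for all such $\bphi$, whence $\mathcal G=\GRAD v$, $v\in W^{1,p}(\Omega)$, and $\Gh\sv[h]\to\GRAD v$ weakly in $L^p(\Omega)^d$. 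Note that, in contrast with Proposition~\ref{prop:comp}, I do not test against $\bphi$ with non-vanishing trace: there is no essential boundary condition to recover on $v$, the Neumann condition being natural and only emerging later in the PDE convergence.

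The hard part is the compactness of the piecewise-constant part $\sv[h]^0$. Unlike in the Dirichlet setting, the boundary-face jumps are not controlled by $\norm[1,p,h]{{\cdot}}$, so the broken-polynomial discrete Rellich theorem \cite[Theorem~5.6]{Di-Pietro.Ern:12} cannot be invoked directly; the entire purpose of the decomposition is to reduce compactness to the low-order result for piecewise-constant functions with zero average, which is where the bulk of the discrete functional analysis on general polytopal meshes is needed. Everything else is a faithful transcription of the argument for Proposition~\ref{prop:comp}.
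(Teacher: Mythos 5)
Your proof is correct and follows essentially the same route as the paper: the decomposition $\unv[h]=\unv[h]^0+\unv[h]^1$ from the proof of Lemma~\ref{lem:poi-wir-glob}, Lemma~\ref{lem:poi-wir} to kill the fluctuation, the bound \eqref{PSW.3} together with the low-order discrete Rellich theorem of \cite{koala} for the piecewise-constant part, Corollary~\ref{cor:comp-u-pu} for $\ph\sv[h]$, and the gradient identification of Proposition~\ref{prop:comp} restricted to compactly supported test functions. Your added remarks (why \eqref{eq:comp.1} survives without the Dirichlet condition, and why no boundary trace is recovered on $v$) are accurate clarifications of points the paper leaves implicit.
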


\begin{proof}
  We use the same decomposition $\unv[h]=\unv[h]^0+\unv[h]^1$ as in the proof of Lemma
  \ref{lem:poi-wir-glob}. By Lemma \ref{lem:poi-wir} we have
  $\norm[L^q(\Omega)]{\unv[h]^1}\le C h^\theta \norm[1,h,p]{\sv[h]}$ where $C$ does
  not depend on $h$ and $\theta=1+\frac{d}{q}-\frac{d}{p}>0$. Hence, $\unv[h]^1
  \to 0$ in $L^q(\Omega)$ as $h\to 0$. By \eqref{PSW.3}, $(\norm[W^{1,p},\Th]{\sv[h]^0})_{h\in\mathcal H}$
  remains bounded. Since $\sum_{T\in\Th} |T|\unv[h]^0=0$ for all $h\in\mathcal H$,
  the discrete compactness result for Neumann boundary conditions of \cite{koala}
  shows that there exists a $v\in W^{1,p}(\Omega)$ with zero average such that
  $\unv[h]^0\to v$ strongly in $L^q(\Omega)$ up to a subsequence. Hence, $\unv[h]\to v$
  in $L^q(\Omega)$ along the same subsequence. We then apply Corollary \ref{cor:comp-u-pu},
  which is independent of the boundary conditions, to deduce that $\ph\sv[h]\to v$
  in $L^q(\Omega)$. 

  To prove that $\GT[h]\sv[h]\to \GRAD v$ weakly in $L^p(\Omega)^d$, we notice that
  by Lemma \ref{lem:equivnorms} the functions $\GT[h]\sv[h]$ remain bounded in $L^p(\Omega)^d$
  and therefore converge weakly to some $\mathcal G$ in this space. We prove that $\mathcal G=
  \GRAD v$ as in the proof of Proposition \ref{prop:comp}, using test functions $\bphi\in C^\infty_c(\Omega)^d$ instead of $\bphi\in C^\infty(\Real^d)^d$. \end{proof}

\section{Conclusion}\label{sec:ccl}

We extended the HHO method of \cite{Di-Pietro.Ern.ea:14} to fully non-linear Leray--Lions equations,
which include the $p$-Laplace model. The lowest-order version of this method (corresponding to $k=0$) belongs to the family of mixed-hybrid Mimetic Finite Differences, Hybrid Finite Volumes and Mixed Finite Volumes
schemes. We proved the convergence of the HHO method without assuming unrealistic
regularity properties on the solution, or restrictive assumptions on the non-linear operator.
To establish this convergence, we developed discrete functional analysis results that include
the analysis of $L^p$- and $W^{s,p}$-stability and approximation properties of $L^2$-projectors on broken polynomial spaces. We provided numerical results which demonstrate the good approximation properties of the method on a variety of meshes, and for various orders
(low as well as high).


\appendix

\section{Discrete functional analysis in local polynomial spaces}\label{sec:appen}

This appendix collects discrete functional analysis results in local polynomial spaces that are of general interest for polynomial-based discretizations of linear and nonlinear problems.
Most of these results have already been stated without proof in the paper, but we restate them for the sake of easy consultation.

\subsection{Estimates in local polynomial spaces}\label{sec:est.loc.space}

This section collects $L^p$- and $W^{s,p}$-estimates in local polynomial spaces including direct and reverse Sobolev and Lebesgue embeddings.

\lebemb*

\begin{remark}[Reverse embeddings]
  If $q\le m$ then this result is a classical (direct) Lebesgue
  embedding due to H\"older's inequality. It holds for $m<q$ solely because we consider
  polynomials (and we notice that the scaling $|U|^{\frac{1}{q}-\frac{1}{m}}$ explodes
  as $h_U\to 0$).
\end{remark}

\begin{remark}[Sobolev reverse embeddings]
  Let $U$ be a polyhedral set that admits a simplicial decomposition 
  such that for any simplex $S$, if $h_S$ is the diameter of $S$ and $r_S$
  its inradius then $h_S\le \varrho r_S$, and $h_U\le \varrho h_S$.
  The following inverse inequality holds with $C_{\rm inv}$ depending on
  $\varrho$, $k$ and $p$, but independent of $h$
  (cf.~\cite[Lemma~1.44]{Di-Pietro.Ern:12} for the case $p=2$ and use
  use \cite[Lemma~1.50]{Di-Pietro.Ern:12} or Lemma \ref{lem:lebemb} to deduce the general case), 
  \begin{equation}
    \label{eq:inv}
    \forall v\in\Poly{k}(U)\,:\,
    \norm[L^p(U)]{\GRAD v}\le 
    C_{\rm inv} h_U^{-1}\norm[L^p(U)]{v}.
  \end{equation}
  Using this inequality, we can easily deduce from Lemma
  \ref{lem:lebemb} the following reverse Sobolev embeddings: Under the assumptions
  of Lemma \ref{lem:lebemb}, if $U$ is open and $m\ge r$, then for all $w\in \Poly[N]{k}(U)$ we have
  \[
  \seminorm[W^{m,p}(U)]{w}\lesssim h_U^{r-m} |U|^{\frac{1}{p}-\frac{1}{q}}\seminorm[W^{r,q}(U)]{w}.
  \]
  Here $\lesssim$ is up to a multiplicative constant only depending on $k$, $\delta$, 
  $p$, $q$ and $r$.
  Note that the result obviously cannot hold if $m<r$ and $m\le k$ (consider $w$ polynomial
  of degree exactly $m$: the left-hand side does not vanish, while the right-hand side does).
\end{remark}

\begin{proof}[Proof of Lemma~\ref{lem:lebemb}] 
  We obviously only have to prove $\lesssim$ since $m$ and $q$ play
  symmetrical roles in \eqref{rev:leb}.
  By~\eqref{eq:reg.U}, there is $\vec{x}_U\in U$ such that $B(\vec{x}_U,\delta h_U)\subset U\subset B(\vec{x}_U,h_U)$.
  Let $U_0=(U-\vec{x}_U)/h_U$.
  Using the change of variable $\vec{x}\in U\mapsto (\vec{x}-\vec{x}_U)/h_U\in U_0$, we see that, for $\ell\in [1,+\infty]$,
  \begin{equation}\label{leb:scaling}
    \norm[L^\ell(U)]{w}
    =h_U^\frac{N}{\ell}\norm[L^\ell(U_0)]{w_0}\approx |U|^{\frac{1}{\ell}}
    \norm[L^\ell(U_0)]{w_0},
  \end{equation}
  where we used $h_U^N\approx |U|$ (since $h_U\approx r_U$) and we set $w_0(\vec{y})=
  w(\vec{x}_U+h_U\vec{y})$. 
  Assume that there exists $C_0$ not depending on the geometry of $U_0$ but solely on $\delta$ such that
  \begin{equation}\label{leb:unit}
    \forall v\in\Poly[N]{k}(U_0)\,:\,
    \norm[L^q(U_0)]{v}\le C_0\norm[L^m(U_0)]{v}.
  \end{equation}
  Then combining this with \eqref{leb:scaling}, since $w_0\in \Poly[N]{k}(U_0)$,
  \[
  \norm[L^q(U)]{w}
  \lesssim |U|^{\frac{1}{q}}\norm[L^q(U_0)]{w_0}
  \lesssim |U|^{\frac{1}{q}}\norm[L^m(U_0)]{w_0}
  \lesssim |U|^{\frac{1}{q}-\frac{1}{m}}\norm[L^m(U)]{w},
  \]
  and the lemma is proved.

  It remains to establish \eqref{leb:unit}. To this end, we notice that,
  by choice of $\vec{x}_U$, we have $B(0,\delta)\subset U_0\subset B(0,1)$.
  Since $\norm[L^q(B(0,1))]{{\cdot}}$ and $\norm[L^m(B(0,\delta))]{{\cdot}}$ are
  both norms on $\Poly[N]{k}(U_0)$ (any polynomial that vanishes on a ball
  vanishes everywhere), and since $\Poly[N]{k}(U_0)$ is a finite-dimensional vector
  space, we have
  \begin{equation}\label{leb:unit2}
    \forall v\in \Poly[N]{k}(U_0)\qquad
    \norm[L^q(B(0,1))]{v}\lesssim \norm[L^m(B(0,\delta))]{v},
  \end{equation}
  with constant in $\lesssim$ depending on $\delta$ but not on the geometry of $U_0$. To prove~\eqref{leb:unit}, write
  \[
  \norm[L^q(U_0)]{v}
  \le\norm[L^q(B(0,1))]{v}
  \lesssim\norm[L^m(B(0,\delta))]{v}
  \le\norm[L^m(U_0)]{v}.\qedhere
  \]
\end{proof}

\subsection{$L^p$-stability and $W^{s,p}$-approximation properties of $L^2$-projectors} \label{sec:LPest.L2proj}

This section collects the proofs of $L^p$- and $W^{s,p}$-stability and approximation estimates for $L^2$-projectors on local polynomial spaces stated in Section~\ref{sec:local.poly}. 

\stabproj*

\begin{proof}

  In this proof, $A\lesssim B$ means that $A\le MB$ for some $M$ only depending on
  $N$, $\delta$, $k$ and $p$.

  \emph{Step 0: $p=2$.} This case is trivial since $\lproj[U]{k}$ is an
  orthogonal projector in $L^2(U)$ and therefore satisfies \eqref{stab.proj}
  with $C=1$.

  \emph{Step 1: $p>2$.} We use Lemma \ref{lem:lebemb} to write $\norm[L^p(T)]{\lproj[U]{k}g}
  \lesssim |T|^{\frac{1}{p}-\frac{1}{2}}\norm[L^2(T)]{\lproj[U]{k}g}$.
  Since $g\in L^p(T)\subset L^2(T)$, we can use \eqref{stab.proj} for $p=2$ and
  we deduce
  $\norm[L^p(T)]{\lproj[U]{k}g}\lesssim |T|^{\frac{1}{p}-\frac{1}{2}}\norm[L^2(T)]{g}$.
  We then conclude thanks to H\"older's inequality, valid since $p>2$,
  \[
  \norm[L^p(T)]{\lproj[U]{k}g}\lesssim |T|^{\frac{1}{p}-\frac{1}{2}}
  |T|^{\frac{1}{2}-\frac{1}{p}}\norm[L^p(T)]{g}=\norm[L^p(T)]{g}.
  \]

  \emph{Step 2: $p<2$.} We use a standard duality technique. Let $g\in L^p(U)$ and $w\in L^{p'}(U)$.
  Then by definition of $\lproj[U]{k}$ and using \eqref{stab.proj} with
  $p'>2$ instead of $p$,
  \[
  \int_U \lproj[U]{k}g(\vec{x})w(\vec{x})\,d\vec{x}
  =\int_U g(\vec{x})\lproj[U]{k}w(\vec{x})\,d\vec{x}
  \le \norm[L^p(U)]{g}\norm[L^{p'}(U)]{\lproj[U]{k}w}
  \lesssim \norm[L^p(U)]{g}\norm[L^{p'}(U)]{w}.
  \]
  Taking the supremum of this inequality over all $w\in L^{p'}(U)$
  such that $\norm[L^{p'}(U)]{w}=1$ shows that \eqref{stab.proj} holds.\end{proof}

\Wkpinterp*

\begin{proof}

  Here, $A\lesssim B$ means that $A\le MB$ with $M$ only depending on $N$, $\rho$, $k$, $s$ and $p$.

  The proof combines averaged Taylor polynomials \cite{Brenner.Scott:08,Dupont.Scott:80}
  with the $L^p$-stability of the $L^2$-projector (Lemma \ref{lem:stab.proj}). 
  Since smooth functions are dense in $W^{s,p}(U)$, we only need to prove the
  result for $v\in C^\infty(U)\cap W^{s,p}(U)$. The Sobolev representation of $v$ reads
  \cite{Brenner.Scott:08}
  \begin{equation}\label{sob.rep}
    v=Q^s v + R^s v
  \end{equation}
  where $Q^sv$ is a polynomial of degree less than or equal to $s-1$ and the remainder $R^sv$
  satisfies \cite[Lemma 4.3.8]{Brenner.Scott:08}
  \begin{equation}\label{sob.rep.rem}
    \forall r\in \{0,\ldots,s\}\,:\,\seminorm[W^{r,p}(U)]{R^s v}\lesssim
    h_U^{s-r}\seminorm[W^{s,p}(U)]{v}. 
  \end{equation}
  Since $Q^sv$ is a polynomial of degree $\leq s-1\leq k$, $\lproj[U]{k}(Q^sv)=Q^sv$ and therefore,
  from \eqref{sob.rep}, $\lproj[U]{k}v=Q^sv + \lproj[U]{k}(R^sv)$.
  Subtracting this from \eqref{sob.rep}, we infer $v-\lproj[U]{k}v=R^sv-\lproj[U]{k}(R^sv)$.
  Hence,
  \begin{equation}\label{interp.Wsp.1}
    \seminorm[W^{m,p}(U)]{v-\lproj[U]{k}v}\le
    \seminorm[W^{m,p}(U)]{R^sv}+ \seminorm[W^{m,p}(U)]{\lproj[U]{k}(R^sv)}.
  \end{equation}
  Iterating the inverse inequality \eqref{eq:inv} and using Lemma \ref{lem:stab.proj} we see that
  \begin{equation}\label{interp.Wsp.2}
    \seminorm[W^{m,p}(U)]{\lproj[U]{k}(R^sv)}\lesssim h_U^{-m}\norm[L^p(U)]{\lproj[U]{k}(R^sv)}
    \lesssim h_U^{-m}\norm[L^p(U)]{R^sv}.
  \end{equation}
  Estimate \eqref{sob.rep.rem} applied to $r=m$ and $r=0$ shows that
  \begin{equation}\label{interp.Wsp.3}
    \seminorm[W^{m,p}(U)]{R^sv}+h_U^{-m}\norm[L^p(U)]{R^sv}
    \lesssim h_U^{s-m}\seminorm[W^{s,p}(U)]{v}.
  \end{equation}
  The result follows from \eqref{interp.Wsp.1}, \eqref{interp.Wsp.2} and \eqref{interp.Wsp.3}.\end{proof}

\Wkpinterptrace*

\begin{proof}

  As expected $A\lesssim B$ is understood here up to a multiplicative
  constant that only depends on $N$, $\varrho$, $k$, $s$ and $p$.
  We first recall a classical continuous trace inequality:
  \begin{equation}\label{ineq.cont.trace}
    \forall w\in W^{1,p}(T)\,:\,h_T^{\frac1p}\norm[L^p(\partial T)]{w}\lesssim 
    \norm[L^p(T)]{w}+h_T\norm[L^p(T)]{\GRAD w}.
  \end{equation}
  For $p=2$ this inequality can be deduced from \cite[Lemma~1.49]{Di-Pietro.Ern:12}
  and many other references. The case of a general $p$ is less easy to find in the literature,
  but actually very simple to prove. Since $T$ is the union of disjoint simplices of inradius
  and diameter comparable to $h_T$, it is sufficient to prove the result when $T$ is one
  of these simplices $S$. For such a simplex, there exists an affine mapping $A:T\to T_0$,
  where $T_0=\{\vec{x}\in\Real^d\,:\,x_i> 0\,,\;\sum_{i=1}^d x_i<1\}$ is the
  reference simplex, such that the norms of the linear parts of $A$ and $A^{-1}$ are
  respectively of order $h_T^{-1}$ and $h_T$. Consider then $w_0\in W^{1,p}(T_0)$
  defined by $w_0(\vec{x})=w(A^{-1}\vec{x})$. On $T_0$ we have a trace inequality
  \begin{equation}\label{ineq.cont.trace.0}
    \norm[L^p(\partial T_0)]{w_0}\le C_{d,p} (\norm[L^p(T_0)]{w_0}+\norm[L^p(T_0)]{\GRAD w_0}).
  \end{equation}
  By noticing that $|\GRAD w_0(\vec{x})|\lesssim h_T |(\GRAD w)(A^{-1}\vec{x})|$
  and using changes of variables $\vec{x}\mapsto \vec{y}=A\vec{x}$, 
  \eqref{ineq.cont.trace.0} gives \eqref{ineq.cont.trace}.

  Estimate \eqref{eq:approx.lproj.Wsp.trace} is an immediate consequence of \eqref{ineq.cont.trace}
  and of \eqref{eq:approx.lproj.Wsp}. For $m\le s-1$, by applying \eqref{ineq.cont.trace}
  to $w=\partial^\alpha(v-\lproj[T]{k}v)\in W^{1,p}(T)$ for all $\alpha\in \mathbb{N}^N$
  of total length $m$ we find
  \[
  h_T^{\frac1p}\seminorm[{W^{m,p}(\Fh[T])}]{v-\lproj[T]{k}v}\lesssim 
  \seminorm[W^{m,p}(T)]{v-\lproj[T]{k}v}+h_T\seminorm[W^{m+1,p}(T)]{v-\lproj[T]{k}v}.
  \]
  We then use \eqref{eq:approx.lproj.Wsp} for $m$ and $m+1$ on the two terms
  in the right-hand side to conclude.\end{proof}

\paragraph{Acknowledgements.} This work was partially supported by ANR project HHOMM (ANR-15-CE40-0005).

\footnotesize
\bibliographystyle{plain}
\bibliography{hho-plap}

\end{document}